\documentclass[sn-mathphys-num]{sn-jnl}


\usepackage{graphicx}%
\usepackage{multirow}%
\usepackage{amsmath,amssymb,amsfonts}%
\usepackage{amsthm}%
\usepackage{mathrsfs}%
\usepackage[title]{appendix}%
\usepackage{xcolor}%
\usepackage{textcomp}%
\usepackage{manyfoot}%
\usepackage{booktabs}%
\usepackage{algorithm}%
\usepackage{algorithmicx}%
\usepackage{algpseudocode}%
\usepackage{listings}%


\usepackage{amssymb}
\usepackage{mathtools}
\usepackage{url}
\usepackage{autonum}
\usepackage{xspace}
\usepackage{xcolor}
\usepackage{cancel}
\usepackage{graphicx}
\usepackage{tikz} 
\usepackage{pgfplots} 
\usepackage{verbatim}
\usepackage{caption}
\usepackage{subcaption}

\usepackage{orcidlink}




\numberwithin{equation}{section}

\definecolor{mycolor1}{cmyk}{0.93,.5,0,.05}
\definecolor{mycolor2}{cmyk}{0,0.62,0.88,0.15}%
\definecolor{myred}{cmyk}{.25,1,1,0}
\definecolor{mydarkgray}{cmyk}{0,0,0,.50} 
\definecolor{mygray}{cmyk}{0,0,0,.30} 
\definecolor{mygreen}{cmyk}{  1,0,.60,0} 
\definecolor{myblue}{cmyk}{.80,.500,0,0} 
\definecolor{myviolet}{cmyk}{0.23,.99,0,0.31} 



\definecolor{color0}{rgb}{0.12156862745098,0.466666666666667,0.705882352941177}
\definecolor{color1}{rgb}{1,0.498039215686275,0.0549019607843137}
\definecolor{color2}{rgb}{0.172549019607843,0.627450980392157,0.172549019607843}
\definecolor{color3}{rgb}{0.83921568627451,0.152941176470588,0.156862745098039}
\definecolor{color4}{rgb}{0.580392156862745,0.403921568627451,0.741176470588235}
\definecolor{color5}{rgb}{0.549019607843137,0.337254901960784,0.294117647058824}
\definecolor{color6}{rgb}{0.890196078431372,0.466666666666667,0.76078431372549}

\newcommand{\Fenicsx}{\texttt{FEniCSx}}
\newcommand{\PETSc}{\texttt{PETSc}}

\newcommand{\mycode}{\url{https://doi.org/10.35097/1871}}
\definecolor{darkgreen}{rgb}{0,0.5,0}

\newcommand{\sol}{u}

\newcommand{\solinit}{\sol_0}
\newcommand{\soltinit}{v_0}

\newcommand{\solbeta}{\sol^\beta}
\newcommand{\solzero}{\sol^{\beta=0}}
\newcommand{\soln}[1]{\widehat{\sol}^{#1}}

\newcommand{\solh}{\sol_h}
\newcommand{\solhn}[1]{\sol_h^{#1}}

\newcommand{\solhnbeta}[1]{\sol_{h,\beta}^{#1}}
\newcommand{\solhnzero}[1]{\sol_{h,\beta=0}^{#1}}

\newcommand{\solhbar}{\bar{\sol}_h}
\newcommand{\solhnbar}[1]{\bar{\sol}_h^{#1}}

\newcommand{\solhbeta}{\solh^\beta}
\newcommand{\solhzero}{\solh^{\beta=0}}

\newcommand{\errhzero}{\errh^{\beta=0}}
\newcommand{\resolvent}[1]{\mathcal{R}^{#1}}


\newcommand{\solhnt}[1]{v_h^{#1}}
\newcommand{\spacepar}{c_{\mathrm{sp}}}
\newcommand{\timepar}{c_{\mathrm{time}}}

\newcommand{\err}{e}

\newcommand{\errh}{\err_h}
\newcommand{\errhn}[1]{\errh^{#1}}

\newcommand{\errhnzero}[1]{\err_{h,\beta=0}^{#1}}

\newcommand{\errhbeta}{\errh^\beta}

\newcommand{\Deltah}{\Delta_h}
\def\Om{\Omega}
\def\Ltwo{L^2(\Omega)}
\def\Lthree{L^3(\Omega)}
\def\Lsix{L^6(\Omega)}
\def\Linf{L^\infty(\Omega)}
\def\Woneinf{W^{1, \infty}(\Omega)}
\def\Hone{H^1(\Omega)}

\newcommand{\Vh}{V_h}

\newcommand{\Triag}{\mathcal{T}_h}

\newcommand{\tn}[1]{t_{#1}}

\def\calU{\mathcal{U}}
\def\ds{\, \textup{d}s}

\newcommand{\dint}[1]{\,\textup{d} #1}

\newcommand{\finalth}{t_h^*}
\newcommand{\finalthbeta}{t_{h,\beta}^*}

\newcommand{\quasilowerbound}{\gamma}

\newcommand{\phih}{\varphi_h}

\newcommand{\pt}{\partial_t}
\newcommand{\ptau}{\partial_{\tau}}

\newcommand{\intt}{\int_0^t}

\newcommand{\norm}[1]{\| #1 \|}
\newcommand{\ip}[2]{ (#1 , #2 )_{L^2(\Omega)} }

\newcommand{\projLtwo}{\pi_h}
\newcommand{\projRitz}{R_h}

\newcommand{\Ih}{I_h}
\newcommand{\Id}{\textup{I}}

\newcommand{\normlinfLtwo}[3]{ \norm{#1}_{\ell^\infty(#2, \,  #3  , \, \Ltwo) } }

\newcommand{\Honethree}{{H_\diamondsuit^3(\Omega)}}
\newcommand{\Honefour}{{H_\diamondsuit^4(\Omega)}}

\newcommand{\R}{\mathbb{R}} 

\makeatletter
\renewcommand{\fnum@figure}{Figure \thefigure}
\makeatother

\newcommand{\midskip}{~\\[2mm]}

\newtheorem{lemma}{Lemma}[section]

\newtheorem{mytheorem}{Theorem}[section]
\newtheorem{myproposition}{Proposition}[section]

\usepackage{marginnote}
\newcommand{\mathreferences}[1]{%
	\marginnote{%
		\footnotesize%
		\normalfont
		\color{gray}%
		#1%
		~~~~~
	}%
}
\newcommand{\revisedd}[2]{{\color{blue} #2}\mathreferences{{\color{blue} R#1}}}

\raggedbottom

\begin{document}

\title[Robust fully discrete error bounds for the Kuznetsov equation]{Robust fully discrete error bounds for the Kuznetsov equation in the inviscid limit}


\author[1]{\fnm{Benjamin} \sur{D{\"o}rich*}\orcidlink{0000-0001-5840-2270}}\email{benjamin.doerich@kit.edu}

\author[2]{\fnm{Vanja} \sur{Nikoli\'c} \orcidlink{0009-0003-7610-2900} }\email{vanja.nikolic@ru.nl}

\affil[1]{\orgdiv{Institute for Applied and Numerical Mathematics}, \orgname{Karlsruhe Institute of Technology}, \orgaddress{\street{Englerstr. 2}, \city{Karlsruhe}, \postcode{76131}, \country{Germany}}}

\affil[2]{\orgdiv{Department of Mathematics}, \orgname{Radboud University}, \orgaddress{\street{Heyendaalseweg 135}, \city{Nijmegen}, \postcode{6525 AJ}, \country{The Netherlands}}}


\abstract{The Kuznetsov equation is a classical wave model of nonlinear acoustics that incorporates quadratic gradient nonlinearities. When its strong damping vanishes, it undergoes a singular behavior change, switching from a parabolic-like to a hyperbolic quasilinear evolution. In this work, we devise an analytical framework that allows establishing optimal error bounds for its finite element approximation as well as a semi-implicit fully discrete approximation that are robust with respect to the vanishing damping parameter. The core of the new arguments lies in deriving suitable energy estimates directly for the error equation where one can more easily exploit the polynomial structure of the nonlinearities and compensate inverse estimates with smallness conditions on the error. Numerical experiments are included to illustrate the theoretical results.
}

\keywords{asymptotic-preserving error estimates, full discretization, Kuznetsov equation, nonlinear acoustics}



\maketitle

\section{Introduction}
	We consider quasilinear wave equations of the following form:
	\begin{equation} \label{eq:Kuznetsov}
		( 1 +  \kappa \pt \sol ) \pt^2 \sol - c^2 \Delta \sol -\beta \Delta \pt \sol+ \ell \nabla \sol \cdot \nabla \pt \sol = f.
	\end{equation}
	This model arises in nonlinear acoustics under the name Kuznetsov equation~\cite{kuznetsov1971equations}.  It describes propagation of sound waves through fluids and can be understood as an approximation to the Navier--Stokes--Fourier system of governing equations of sound motion that is more accurate than Westervelt's equation~\cite{westervelt1963parametric}.
	 In the context of nonlinear acoustics,  $u=u(x,t)$ in \eqref{eq:Kuznetsov} is the acoustic velocity potential, $c>0$ denotes the speed of sound in the medium. 
	 {Furthermore, $\ell=2$ and $\kappa = \frac{1}{c^2}\frac{B}{A}$, where $\frac{B}{A}$ is the so-called parameter of nonlinearity of the medium, but we take here $\kappa$, $\ell \in \R$ so to slightly generalize the analysis setting.} The quadratic gradient nonlinearity (that is, $ (\frac12\ell |\nabla \sol|^2)_t$)  captures local (non-cumulative) nonlinear effects in sound propagation, which may be prominent, for example, close to the sound source; see the discussion in~\cite[Ch.\ 3.6]{HamBL24} for more details on modeling. \\
	\indent Equation \eqref{eq:Kuznetsov} is strongly damped when the parameter $\beta$, known in acoustics as the sound diffusivity, is positive and it exhibits parabolic-like behavior leading to an exponential decay of the energy of the solutions as time grows; see, e.g.,~\cite{kaltenbacher2012analysis, Wilke} for its global well-posedness analysis in this parameter regime. In the case $\beta=0$, however, smooth solutions are only expected to exist locally in time after which a gradient blow-up is expected; see the  analysis in~\cite{dekkers2017cauchy, kaltenbacher2022parabolic}, and numerical experiments conducted in~\cite{walsh2007finite}.  In practice, sound diffusivity is small and it may become negligible in certain (inviscid) propagation media. 
	{
	When the strong damping vanishes, the equation undergoes a singular behavior change, switching from a parabolic-like to a hyperbolic quasilinear evolution.
	}
	Investigation of the singular inviscid limit of a Dirichlet boundary-value problem for \eqref{eq:Kuznetsov} has been conducted in \cite{kaltenbacher2022parabolic}.  However, the questions of stability and asymptotic behavior of \emph{approximate} solutions of \eqref{eq:Kuznetsov} as $\beta \rightarrow 0^+$ are open in the field. \\[1mm]
\textbf{{Main aims}}. The first aim of the present work is to establish $\beta$-robust error bounds for the finite element and full discretizations of \eqref{eq:Kuznetsov}, {where by robust we mean that all derived bounds are uniform with respect to $\beta$}. As it turns out, the robust discretization is possible when using (at least) quadratic finite elements. Secondly, we determine the behavior of the (semi-)discrete solutions as $\beta \rightarrow 0^+$, and the conditions under which it asymptotically preserves the order of convergence of the exact solution established in~\cite{kaltenbacher2022parabolic}.
In addition, we determine how one has to couple the spatial discretization parameter, time step size, and the damping parameter to allow also linear finite elements in space.
{
Let us emphasize that for the time-integration we only consider a semi-implicit Euler method, sparing several technicalities  coming, for example, from using a higher-order method, such as an adaption of the second-order IMEX scheme~\cite{HocL21}. Further, we do not consider explicit schemes as their CFL condition is observed in experiments, whereas our {CFL-type condition} appears to be (at least in this strong form) an artifact of the analysis.
}
\\[1mm]
\textbf{{Novelty and related results}}. 	To the best of our knowledge, this is the first work dealing with the robust numerical analysis of the semi-discrete Kuznetsov equation, and the first work analyzing a fully discrete scheme for it. For the strongly damped Kuznetsov equation (with $\beta>0$ fixed), where one can exploit the parabolic-like evolution, \emph{a priori} analysis of a mixed-approximation has been conducted in~\cite{meliani2022mixed}
 and an \emph{a priori} analysis of a discontinuous Galerkin coupling for a nonlinear elasto-acoustic problem based on this model has been performed in~\cite{muhr2023discontinuous}.\\
\indent In contrast, quasilinear equations of Westervelt type given by
		\begin{equation} \label{eq:Westervelt}
	( 1 +  \kappa_1  \sol + \kappa_2 \pt \sol) \pt^2 \sol - c^2 \Delta \sol -\beta \Delta \pt \sol+ \kappa_1 ( \pt \sol)^2 = f, \quad \kappa_1, \kappa_2 \in \R
\end{equation}
 are  by now much better understood from the point of view of the numerical analysis as they do not involve quadratic gradient nonlinearities.  Again here, the cases $\beta=0$ and $\beta>0$ are qualitatively different. Concerning the spatial discretization,
 the results in \cite{hochbruck2022error} yield optimal order of convergence of space discrete solutions in the energy norm for $\beta=\kappa_2=0$.  A $\beta$-uniform analysis of a mixed approximation of \eqref{eq:Westervelt} with $\kappa_1=0$ has been conducted in \cite{meliani2022mixed}. Fully discrete	schemes for \eqref{eq:Westervelt} with $\beta=\kappa_2=0$ have been analyzed in~\cite{Doe23,Mai22}.  We also point out the works~\cite{CraS86,Kan90,Kob75,Tak76}, where existence of solutions to undamped quasilinear and nonlinear evolution equations of this type is established, and one can find approximation rates of the implicit and semi-implicit Euler methods.  Within an (extended) Kato framework, 
	optimal order for these methods has been determined in \cite{HocP17}
	and rigorous error bounds for the time discretization by higher-order Runge-Kutta methods are derived in \cite{HocSP18,KovL18}. \\
	\indent For the strongly damped Westervelt equation with $\beta>0$ fixed and $\kappa_2=0$,  optimal order of convergence of
	continuous Galerkin methods has been established in~\cite{nikolic2019priori}. Recently also Westervelt's equation with time-fractional dissipation instead of $-\beta \Delta \pt \sol$ has received attention. A time-stepping method for  such a model  has been analyzed in~\cite{baker2022numerical}, and a $\beta$-robust finite element analysis for both time-fractional and strongly damped Westervelt's equation has been performed in~\cite{nikolic2023asymptotic}, together with establishing the vanishing $\beta$ convergence rates of the approximate solution.   \\
	  \indent We mention also that other quasilinear wave models have been rigorously investigated in the literature. In~\cite{GauLMRS19}, trigonometric integrators have been analyzed for nonlinear wave equations in the form of
	\[
	\pt^2 u = \partial_x^2 u - u + \kappa a(u) \partial_x^2 u + \kappa g(u, \partial_x u)
	\]
	in one space dimension under periodic boundary conditions. 
	Analysis of different time stepping schemes for nonlinear hyperbolic problems can also be found in~\cite{Bal86,Bal88,Ewi89, shao2022discontinuous},
	and two-step methods are considered in~\cite{BalD89}.
	For a class of linearly implicit single-step schemes as well as a linearly and a fully implicit two-step scheme, optimal error bounds are derived in \cite{makridakis1993finite}. \\
	\indent Compared to the available works, the main challenge here comes from treating the nonlinear term $\ell \nabla \sol \cdot \nabla \pt \sol $ after discretization, in combination with having to guarantee that the discrete version preserves the non-degeneracy condition:
	\[
	 1 +  \kappa \pt \sol  \geq \tilde{\gamma} >0
	\]
	and that the bounds are uniform with respect to $\beta$.  To guarantee $\beta$ uniformity, we have to work also with the time-differentiated version of the (semi-)discrete problems, which introduces (a discrete version of) the term $\ell \nabla \sol \cdot \nabla \pt^2 \sol $. A fixed-point argument along the lines of existing results on the damped Kuznetsov equation in~\cite{meliani2022mixed, muhr2023discontinuous} would then not allow us to match the order of convergence in the fixed-point iterates. We will instead first show the existence of a unique approximate solution on a discretization-dependent time interval and then derive uniform bounds to extend it beyond it, in the spirit of~\cite{hochbruck2022error}. To tackle the quadratic gradient nonlinearity, the main idea here is to devise energy estimates directly for the error equation where one can more easily exploit the polynomial structure of the nonlinearities  so as to mimic the following identity from the continuous setting: 
	\[
			\ip{\nabla \sol \cdot \nabla \pt^2 \sol   }{\pt^2 \sol}
=
- \frac12 \ip{\Delta \sol \, \pt^2 \sol   }{\pt^2 \sol}
	\]
and then compensate inverse estimates with smallness conditions on the error. We refer to Proposition~\ref{prop:fe_FirstEstimate} for details.
\subsection*{Organization of the exposition} The rest of the manuscript is organized as follows.  We first state our main results in Section~\ref{sec:Main_results}, both for the spatially semi-discrete and the fully discrete problem,
and 
present numerical experiments which corroborate the theory.
In Section~\ref{sec:FE_analysis}, we conduct the finite element analysis and establish the $\beta$-robust optimal error bounds in the energy norm when using quadratic or higher-order elements as well as the $\beta$-limiting behavior of the finite element solution. 
Section~\ref{sec:Semi_implicit} is dedicated to the stability and error analysis of a fully discrete problem based on a semi-implicit Euler method for the time discretization.
An extension to linear finite elements is given in Section~\ref{sec:non_robust} with non-robust estimates with respect to $\beta$.

\subsection*{Notation} Below we use  $x\lesssim y$ to denote $x \leq C y$, where $C>0$ is a generic constant that does not depend on the discretization parameters nor on the damping coefficient $\beta$, but may depend on the exact solution and the final time $T$. We use $\ip{\cdot}{\cdot}$ to denote the scalar product in $\Ltwo$. We omit the temporal domain when writing norms; 
for example, $\|\cdot\|_{L^p(L^q(\Omega))}$ denotes the norm on $L^p(0,T; L^q(\Omega))$. 
 We use $\|\cdot\|_{L^p_t(L^q(\Omega))}$ to denote the norm on $L^p(0,t; L^q(\Omega))$ for some $t \in (0,T)$. 
		
		
\section{Statements of the main results}  \label{sec:Main_results}
In this section, we present the main results of this work. To this end, we first discuss the assumptions on the exact solution. As we are interested in the vanishing $\beta$ dynamics, we may assume that $\beta \in [0, \bar{\beta}]$ for some fixed $\bar{\beta}>0$. 
\subsection{Assumptions on the exact solution}
Throughout, we assume that the initial data and source term are sufficiently smooth and small and the final time $T>0$ short so that the initial boundary-value problem
		\begin{equation} \label{ibvp:Kuznetsov}
			\left\{\begin{aligned}
				&( 1 +  \kappa \pt \sol ) \pt^2 \sol - c^2 \Delta \sol -\beta \Delta \pt \sol+ \ell \nabla \sol \cdot \nabla \pt \sol = f \ &&\text{in} \  \Omega \times (0,T),\\
				&u \vert_{\partial \Omega}=0, \ && \\
				&(u, u_t)\vert_{t=0}= (\solinit, \soltinit),&&
			\end{aligned}	\right.
		\end{equation}
		has a unique solution in
		\begin{equation} \label{def_calU}
			\begin{aligned}
			\calU = &\, \begin{multlined}[t] L^\infty(0,T; W^{2, \infty}(\Om)) \cap  H^3(0,T; H^{k+1}(\Om) \cap W^{1,\infty}(\Om)\cap H_0^1(\Om)) \\
			 \cap
			  W^{3,\infty}(0,T ; H^2(\Omega)) 
			 \cap 
			  H^4(0,T ; \Ltwo) , 
			  \end{multlined}
\end{aligned}
		\end{equation}
%
for $k \geq  2$ (or $k \geq 1$ in Section~\ref{sec:non_robust}),
		with $\beta$-uniform bounds:
		\begin{equation} \label{assumptions_exact_sol}
			\|u\|_{\calU} \leq C, \qquad 1+ \kappa \pt \sol \geq \tilde{\gamma} >0 \quad \text{for all } (x,t) \in \bar{\Omega} \times [0,T].
		\end{equation}
Note that the $\tilde{\gamma}$-bound in \eqref{assumptions_exact_sol} guarantees that the leading term in the Kuznetsov equation does not degenerate. The $\beta$-uniform well-posedness analysis of \eqref{ibvp:Kuznetsov} with $f=0$ can be found in \cite{kaltenbacher2022parabolic}. Compared to the results of~\cite{kaltenbacher2022parabolic}, we require more smoothness from the solution. More precisely, assuming the domain $\Omega$ is sufficiently smooth and final time $T$ sufficiently short, the results of~\cite[Thm.\ 6.2]{kaltenbacher2022parabolic} provide uniform well-posedness in the following space:
\begin{equation}
	\begin{aligned}
	\begin{multlined}[t] 
	H^3(0,T;H_0^1(\Omega))\cap W^{2,\infty}(0,T;H_0^1(\Omega)\cap H^2(\Omega)) 
	\cap W^{1,\infty}(0,T;\Honethree) \\
	\cap L^\infty(0,T;\Honefour),
	\end{multlined} 
	\end{aligned}
\end{equation}
where we have denoted
\begin{equation} \label{sobolev_withtraces}
	\begin{aligned}
		\Honethree=&\, \left\{u\in H^3(\Omega)\,:\,  u\vert_{\partial \Omega} = 0, \ \Delta u \vert_{\partial \Omega}= 0\right\},\quad \Honefour = H^4(\Omega) \cap \Honethree.
	\end{aligned}
\end{equation}
{The condition $u\vert_{\partial \Omega} = \Delta u \vert_{\partial \Omega}= 0$ arises due to the fact that the bi-Laplacian operator is used in the testing procedure of the well-posedness analysis to obtain bounds of a suitably defined energy of a linearized problem, which is later combined with employing Banach's fixed-point theorem; we refer to~\cite[Proposition 6.1 and Theorem 6.1]{kaltenbacher2022parabolic} for details.} We expect that the techniques in~\cite{kaltenbacher2022parabolic} can be extended in a relatively straightforward manner to rigorously prove higher-order uniform well-posedness in $\calU$ for sufficiently smooth and small data, as assumed in the present numerical analysis. We also note that the higher regularity for the strongly damped Kuznetsov equation (i.e., with $\beta>0$) follows by the results of~\cite{kawashima1992global}.  \\
				\indent	Our main contributions concern robust error bounds for a finite element discretization of \eqref{ibvp:Kuznetsov} and a fully discrete scheme, as well as establishing asymptotic-preserving behavior of respective solutions as $\beta$ vanishes; we illustrate them in Figure~\ref{fig:diagram}. 
					
		\begin{figure}[h]
			\definecolor{mblue}{rgb}{0,0,0}
\definecolor{morange}{rgb}{0,0,0}%
%
\begin{tikzpicture}[ font = \normalsize, align = center]
\pgfmathsetmacro{\shift}{.6ex}

\node[align=center]  (1) at (-4.5,1.5) {$\solhnbeta{n}$};
\node  (2) at (0,1.5) {$\solhbeta$};
\node  (3) at (4.5,1.5) {$\solbeta$};

\node  (12) at (-4.5,-1.5) {$\solhnzero{n}$}; 
\node  (22) at (0,-1.5) {$\solhzero$}; 
\node  (32) at (4.5,-1.5) {$\solzero$}; 

Theorem~\ref{thm:ErrorSemiImplicit}


\draw  [-stealth] (2) -- node[below] {$h \to 0$ , Theorem~\ref{thm:kuznetsov_space_main}} (3);


\draw  [-stealth] (22) -- node[above] {$h \to 0$ , Theorem~\ref{thm:kuznetsov_space_main}} (32);

\draw  [-stealth] (1) -- node[left] {$\beta \to 0$ \\ Theorem~\ref{thm:FullyDiscrete_beta_limit}} (12);

\draw  [-stealth] (2) -- node[left] {$\beta \to 0$ \\ Theorem~\ref{thm:FE_beta_limit}} (22);

\draw  [-stealth] (3) -- node[right] {$\beta \to 0$ \\ \cite{kaltenbacher2022parabolic}} (32);

\draw [>=stealth, black, ->] (1)  to [out=35,in=145] 
node [below, sloped]
(TextNode1) {~\\[2mm]$\tau,h \to 0$, Theorem~\ref{thm:ErrorSemiImplicit}}   (3);

\draw [>=stealth, black, ->] (12)  to [out=-35,in=-145] 
node [above, sloped]
(TextNode1) {$\tau,h \to 0$, Theorem~\ref{thm:ErrorSemiImplicit}\\}   (32);



%

%
\end{tikzpicture}

%
%
%
%
%
%
%
%
%
%
			\caption{Diagram representing the main contributions of this work} \label{fig:diagram}
		\end{figure}

		
\subsection{Main results for the finite element discretization}	
In the present work, we employ continuous finite elements and consider a quasi-uniform triangulation  $\Triag$ and the space
	\begin{align}
	\Vh \coloneqq \{ \phih \in C(\Om) \mid  \phih |_K \in \mathcal{P}_k(K)   \text{ for all } K \in \Triag  \}
	\end{align}
	of piecewise polynomials of degree $k$. To conduct the error analysis bellow in a $\beta$-uniform manner, we assume that $k \geq 2$.
	The case $k=1$ with non-uniform bounds is treated separately in Section~\ref{sec:non_robust}.
	 We introduce the Ritz
	  projection
defined for $\varphi \in \Hone$
 via
\begin{align}
	\ip{\nabla \varphi}{ \nabla \phih} &= \ip{\nabla {\projRitz} \varphi}{ \nabla \phih}
\end{align}
for all $\phih \in \Vh$. Further, we make use of the nodal interpolation operator $\Ih \colon C(\Omega) \to \Vh$, and define the discrete Laplacian operator $\Deltah \colon \Vh \to \Vh$ 
for $\psi_h,\phih \in \Vh$
via the relation
\begin{equation}
\ip{\Deltah \psi_h}{\phih} = - \ip{\nabla \psi_h}{ \nabla \phih}.
\end{equation}
	
	\indent 
	With these preparations, we consider the spatially discrete Kuznetsov equation
	\begin{subequations} \label{eq:Kuznetsov_space_discr_full_eq}
	\begin{equation} \label{eq:Kuznetsov_space_discr}
		\begin{aligned}
	\begin{multlined}[t]	\ip{( 1 + \kappa \pt \solh ) \pt^2 \solh }{ \phih}
		- 
		\ip{c^2 \Deltah \solh  }{ \phih}	- 
		\ip{\beta \Deltah \pt \solh  }{ \phih}
	\\	+
		\ell \ip{ \nabla \solh \cdot \nabla \pt \solh }{\phih} 
		=
		\ip{f_h }{\phih} ,
		\end{multlined}
		\end{aligned}
	\end{equation}
	for all $\phih \in V_h$, supplemented by approximate initial data
	\begin{equation}\label{approx_data}
		(\solh, \pt \solh) \vert_{t=0}=(u_{0h}, u_{1h}).
	\end{equation}
	\end{subequations}
	Our first main results establishes \emph{a priori} error bounds for $\solh$ in the energy norm that are uniform with respect to the damping parameter $\beta$.
	\begin{mytheorem}[Robust finite element estimates] \label{thm:kuznetsov_space_main} 
	Let $k \geq 2$ and $\beta \in [0, \bar{\beta}]$ for some $\bar{\beta}>0$. Furthermore, assume that $f$, $f_h \in H^1(0,T; \Ltwo)$ are such that
	\begin{equation} \label{assumption_accuracy_fh}
	\begin{aligned}
	\|f-f_h\|_{H^1(\Ltwo)} \lesssim  h^k
	\end{aligned}
	\end{equation}
{where the hidden constant does not depend on $h$ or $\beta$}, 
	and that the approximate initial data are chosen as
	\begin{equation} \label{eq:init_choice_space}
	\solh(0)= \projRitz \solinit, \ \pt \solh (0)= \projRitz \soltinit,
	\end{equation}
where $\sol \in \calU$ is the solution of 
\eqref{eq:Kuznetsov} 
satisfying \eqref{assumptions_exact_sol}, and $\pt^2 \solh(0)$ is given by
\begin{equation} \label{third_ic}
\begin{aligned}
\begin{multlined}[t]	\ip{( 1 + \kappa \pt \solh(0) ) \pt^2 \solh(0) }{ \phih}
- 
\ip{c^2 \Deltah \solh(0)  }{ \phih}	\\- 
\ip{\beta \Deltah \pt \solh(0)  }{ \phih}+
\ell \ip{ \nabla \solh(0) \cdot \nabla \pt \solh(0) }{\phih} \\
=
\ip{f_h(0) }{\phih}
\end{multlined}
\end{aligned}
\end{equation}
for all $\phih \in V_h$. Then, there exists $h_0>0$ and  a constant $C>0$,  independent of $h$ and $\beta$, such that for all $h\leq h_0$, the following error bound holds:
	\begin{equation} \label{FE_final_est}
	\begin{aligned}
\begin{multlined}[t]	\norm{\pt^2 \sol(t) - \pt^2 \solh(t) }^2_{\Ltwo}
	+  
 	\norm{\nabla \pt \sol(t) - \nabla \pt \solh(t)}^2_{\Ltwo}  
 \\	\hspace*{5cm}+
 	\int_0^t \norm{\nabla \sol(s) - \nabla \solh(s)}^2_{L^6(\Omega)} \ds
	\leq C 
h^{2 k } \end{multlined}
\end{aligned}
	\end{equation}
	for all $t \in [0,T]$.
\end{mytheorem}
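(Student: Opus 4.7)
The plan is to split the error via the Ritz projection, writing $\sol - \solh = (\sol - \projRitz \sol) + (\projRitz \sol - \solh) =: \eta + \errh$. Standard approximation properties of $\projRitz$ control $\eta$ and its time derivatives at optimal order (namely $h^{k+1}$ in $L^2$ and $h^k$ in $H^1$), so the task reduces to estimating the discrete component $\errh \in \Vh$ in the energy norm appearing on the left-hand side of \eqref{FE_final_est}. Inserting $\sol$ into \eqref{eq:Kuznetsov_space_discr} and subtracting yields an error equation for $\errh$ with a linearized principal part $(1+\kappa\pt\solh)\pt^2\errh - c^2\Deltah \errh - \beta \Deltah \pt\errh$ and right-hand side collecting the consistency errors from $\eta$ together with the data defect $f - f_h$, which is of size $h^k$ by \eqref{assumption_accuracy_fh}.

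Because \eqref{eq:Kuznetsov_space_discr} is a quasilinear ODE system in $\Vh$, Picard's theorem produces a local discrete solution on some interval $[0,\finalth]$. Following the philosophy of \cite{hochbruck2022error}, I would enlarge $\finalth$ through a bootstrap: declare $\finalth$ to be the largest time on which the a priori assumption $\norm{\pt\errh}_{\Linf} \leq \tilde{\gamma}/(2|\kappa|+1)$ still holds, so that the discrete non-degeneracy $1+\kappa\pt\solh \geq \tilde{\gamma}/2$ is preserved on $[0,\finalth]$. This is the discrete counterpart of \eqref{assumptions_exact_sol} and allows all coefficients of the error equation to be controlled uniformly in $h$ and $\beta$.

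The core of the argument is the energy estimate on $[0,\finalth]$. Since the target norm involves $\pt^2 \errh$, one has to differentiate the error equation in time and test with $\pt^2 \errh$. The delicate contribution is the quadratic gradient nonlinearity, which after differentiation produces the term $\ell \ip{\nabla \solh \cdot \nabla \pt^2 \errh}{\pt^2 \errh}$. In the continuous setting this is treated by the identity $\ip{\nabla \sol \cdot \nabla \pt^2 \sol}{\pt^2 \sol} = -\tfrac12 \ip{\Delta \sol \, \pt^2 \sol}{\pt^2 \sol}$; mimicking this identity discretely is the technical heart of Proposition~\ref{prop:fe_FirstEstimate}, and it will produce commutator-type remainders that have to be absorbed using inverse estimates paid for by the smallness bound $\norm{\nabla \errh}_{\Linf} \lesssim h^{-d/2} \norm{\nabla \errh}$. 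The latter is $o(1)$ precisely because $k \geq 2$ (for $d\leq 3$), which is where the restriction on the polynomial degree enters. The remaining cubic gradient products are handled by H\"older's inequality together with the Sobolev embedding $\Hone \hookrightarrow \Lsix$, which accounts for the presence of the $\Lsix$-norm on the left-hand side of \eqref{FE_final_est}. Crucially, the damping contribution $\beta \norm{\nabla \pt \errh}^2$ is used only as a non-negative term on the left-hand side and then discarded, so no constant in the final estimate depends on $\beta$ being strictly positive.

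Combining the resulting differential inequality with \Gronwalls lemma and the bounds on $\eta$ yields $\|\errh\|_{\mathrm{energy}}^2 \leq C h^{2k}$ on $[0,\finalth]$ with $C$ independent of $\beta$. Using the inverse estimate $\norm{\pt \errh}_{\Linf} \lesssim h^{-d/2} \norm{\pt \errh} \lesssim h^{k-d/2}$, this strictly improves upon the bootstrap threshold for $h \leq h_0$ sufficiently small, so a standard continuation argument extends $\finalth$ to $T$ and the full bound \eqref{FE_final_est} follows. The main obstacle I expect is the discrete analogue of the integration-by-parts step above: one must simultaneously retain the polynomial structure of the nonlinearity so as to avoid spurious Lipschitz constants that blow up as $\beta \to 0$, keep all constants uniform in $\beta$ even when the parabolic smoothing is absent, and balance the inverse-estimate losses exactly against the gain $k\geq 2$ so that the bootstrap closes.
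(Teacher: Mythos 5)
Your overall architecture matches the paper's: Ritz splitting, local existence by Picard--Lindel\"of, a bootstrap interval $[0,\finalth]$, an energy estimate for the time-differentiated error equation tested with $\pt^2\errh$, the identity $\pt^2 \errh\,\nabla\pt^2\errh=\tfrac12\nabla(\pt^2\errh)^2$ for the critical term, and a continuation argument. However, there is a genuine gap in how you propose to close the critical estimate, and it is not a technicality. For the term $\ell\ip{\nabla\errh\cdot\nabla\pt^2\errh}{\pt^2\errh}$ your accounting uses $\norm{\nabla\errh}_{\Linf}\lesssim h^{-d/2}\norm{\nabla\errh}_{\Ltwo}$ together with the inverse estimate $\norm{\nabla\pt^2\errh}_{\Ltwo}\lesssim h^{-1}\norm{\pt^2\errh}_{\Ltwo}$; the total loss is $h^{-1-d/2}$ against a gain of $\norm{\nabla\errh}_{\Ltwo}\lesssim h^k$, i.e.\ $h^{k-1-d/2}=h^{-1/2}$ for $k=2$, $d=3$, which blows up. The paper instead pays only $h^{-d/6}$ by passing to $W^{1,6}(\Omega)$ and invoking the discrete Sobolev embedding $\norm{\phih}_{W^{1,6}(\Om)}\lesssim\norm{\Deltah\phih}_{\Ltwo}$, arriving at $\alpha\int_0^t\norm{\Deltah\errh}^2\ds+C_\alpha\int_0^t\norm{\pt^2\errh}^2\ds$ with $h^{-1-d/6}\norm{\pt^2\errh}_{\Ltwo}\leq C_0$ as the bootstrap quantity (note: the paper's bootstrap is this weighted smallness of $\pt^2\errh$ and $\nabla\pt\errh$, not merely the non-degeneracy threshold you impose on $\norm{\pt\errh}_{\Linf}$).

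This sharper route forces a second, separate ingredient that your proposal omits entirely: the quantity $\int_0^t\norm{\Deltah\errh}^2\ds$ is not dominated by the left-hand side of the $\pt^2\errh$-energy estimate, so one must additionally test the \emph{undifferentiated} error equation with $-\Deltah\errh$ (Proposition~\ref{prop:kuznetsov_space_main_v2}) to produce $\tfrac{c^2}{2}\int_0^t\norm{\Deltah\errh}^2\ds$ on the left, add the two estimates, and absorb the $\alpha$-term before applying \Gronwalls inequality. This second testing step is also what actually generates the $L^6$-gradient term in \eqref{FE_final_est} (via $\norm{\nabla\errh}_{\Lsix}\lesssim\norm{\Deltah\errh}_{\Ltwo}$), not the embedding $\Hone\hookrightarrow\Lsix$ applied to cubic products as you suggest. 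Without the two-step testing structure your Gr\"onwall argument cannot close in $d=3$.
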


A non-robust variant of this result for $k=1$ is presented later in 
Theorem~\ref{thm:kuznetsov_space_main_beta_dep}. The reason for the exclusion of linear elements in our theory comes the inverse estimate which forces us to control the product $h^{-d/2 + \varepsilon} $ with the error in $H^1$ by a sufficiently small constant, such that for $d=3$ the error has to scale with $h^2$. In the case $d=2$, one could hope for some sharper bounds for linear elements including logarithmic term, however this is beyond the scope of the present work. 
The second main result confirms that, in the setting of Theorem~\ref{thm:kuznetsov_space_main}, the finite element solution preserves the asymptotic behavior as $\beta \rightarrow 0$ of the exact solution established in~\cite{kaltenbacher2022parabolic}.
\begin{mytheorem}[Asymptotic-preserving behavior in the inviscid limit]\label{thm:FE_beta_limit}
Under the assumptions of Theorem~\ref{thm:kuznetsov_space_main}, for $h \in (0, h_0]$, the family $\{\solhbeta\}_{\beta \in (0, \bar{\beta}]}$ of finite element solutions of \eqref{eq:Kuznetsov_space_discr_full_eq} converges in the energy norm to the finite element solution $\solhzero$ of the inviscid semi-discrete problem (i.e., with $\beta=0$) at a linear rate as $\beta \rightarrow 0$. In other words, 
\begin{equation}
	\| \pt \solhbeta - \pt \solhzero \|_{L^\infty(\Ltwo)} + \|\nabla (\solhbeta-\solhzero)\|_{L^\infty(\Ltwo)} \leq C \beta,
\end{equation}
where the constant $C>0$ is independent of  $\beta$ and $h$.
\end{mytheorem}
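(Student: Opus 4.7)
My plan is to apply an energy estimate in the same spirit as Proposition~\ref{prop:fe_FirstEstimate} directly to the difference $w_h := \solhbeta - \solhzero \in V_h$. Since both semi-discrete solutions emanate from the same $\beta$-independent initial data $(\projRitz \solinit, \projRitz \soltinit)$, one has $w_h(0) = \pt w_h(0) = 0$. I would subtract the semi-discrete equation \eqref{eq:Kuznetsov_space_discr} at $\beta>0$ from the same equation at $\beta=0$ and use the algebraic identities
\begin{equation*}
(1+\kappa \pt \solhbeta)\pt^2 \solhbeta - (1+\kappa \pt \solhzero)\pt^2 \solhzero = (1+\kappa \pt \solhbeta)\pt^2 w_h + \kappa (\pt w_h)\, \pt^2 \solhzero,
\end{equation*}
\begin{equation*}
\nabla \solhbeta \cdot \nabla \pt \solhbeta - \nabla \solhzero \cdot \nabla \pt \solhzero = \nabla w_h \cdot \nabla \pt \solhbeta + \nabla \solhzero \cdot \nabla \pt w_h,
\end{equation*}
to linearize the problem; the only inhomogeneity in the resulting error equation for $w_h$ is then $\beta \Deltah \pt \solhbeta$.

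Next, I would test this error equation with $\phih = \pt w_h$, integrate in time, and track the natural weighted energy
\begin{equation*}
E_h(t) = \frac12 \int_\Omega (1+\kappa \pt \solhbeta)(\pt w_h)^2 \dint{x} + \frac{c^2}{2}\|\nabla w_h(t)\|^2_{\Ltwo},
\end{equation*}
together with the dissipation $\beta \|\nabla \pt w_h\|^2_{\Ltwo}$ and a residual $\beta(\nabla \pt \solhzero, \nabla \pt w_h)$ produced by the splitting $\pt \solhbeta = \pt w_h + \pt \solhzero$ in the damping term. Positivity of $E_h$ would follow, for $h$ sufficiently small, from the discrete non-degeneracy $1+\kappa \pt \solhbeta \geq \tilde\gamma/2$, obtained from \eqref{assumptions_exact_sol}, the robust bound \eqref{FE_final_est}, and an inverse estimate. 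The nonlinear cross terms
\begin{equation*}
-\kappa(\pt^2 \solhzero\, \pt w_h, \pt w_h), \quad \ell(\nabla w_h \cdot \nabla \pt \solhbeta, \pt w_h), \quad \ell(\nabla \solhzero \cdot \nabla \pt w_h, \pt w_h)
\end{equation*}
would be controlled exactly as in the proof of Proposition~\ref{prop:fe_FirstEstimate}: the first two via uniform $L^\infty$-bounds on $\pt^2 \solhzero$ and $\nabla \pt \solhbeta$ coming from Theorem~\ref{thm:kuznetsov_space_main}, the regularity in $\calU$, and inverse estimates; the critical third term by splitting $\solhzero = \sol + (\solhzero - \sol)$ and applying the continuous identity $\int \nabla v \cdot \nabla \varphi\, \varphi \dint{x} = -\tfrac12 \int \Delta v\, \varphi^2 \dint{x}$ on the smooth part, while exploiting the gradient smallness provided by \eqref{FE_final_est} on the finite element remainder.

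The decisive step will be the sharp treatment of the $\beta$-dependent terms so that the final rate is linear in $\beta$ rather than merely $\sqrt{\beta}$. My approach is to integrate the residual $\beta(\nabla \pt \solhzero, \nabla \pt w_h)$ by parts in time. Using $w_h(0)=0$, this produces
\begin{equation*}
\int_0^t \beta(\nabla \pt \solhzero, \nabla \pt w_h)\ds = \beta(\nabla \pt \solhzero(t), \nabla w_h(t)) - \int_0^t \beta(\nabla \pt^2 \solhzero, \nabla w_h)\ds,
\end{equation*}
and Young's inequality bounds the right-hand side by $\tfrac{c^2}{4}\|\nabla w_h(t)\|^2_{\Ltwo} + \int_0^t \|\nabla w_h\|^2_{\Ltwo}\ds$ plus a remainder of order $\beta^2$ involving $\|\pt\solhzero\|^2_{L^\infty(\Hone)}$ and $\|\pt^2\solhzero\|^2_{L^2(\Hone)}$, both uniform in $h$ and $\beta$ thanks to the regularity of $\sol$ in $\calU$ together with Theorem~\ref{thm:kuznetsov_space_main}. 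Absorbing the boundary contribution into $E_h(t)$ and invoking Gronwall would then yield $E_h(t) \leq C\beta^2$, which is precisely the $O(\beta)$ bound claimed in the energy norm.

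The hard part will not be the $\beta$-bookkeeping itself, but rather propagating the $L^\infty$-type bounds on the higher time derivatives of $\solhbeta$ and $\solhzero$ needed to close the nonlinear terms: these bounds are obtained by combining the $L^2$-accuracy from Theorem~\ref{thm:kuznetsov_space_main} with inverse estimates, and are precisely the reason $k \geq 2$ is required so that the factor $h^{k-d/2}$ remains small; this is exactly the same mechanism that underpins the robust finite element estimate, and it is reassuring that the same smallness threshold already established there is sufficient here.
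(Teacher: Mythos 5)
Your proposal is correct and follows essentially the same route as the paper's proof: form the difference of the two semi-discrete solutions (which share $\beta$-independent initial data), test the linearized difference equation with its time derivative, handle the quadratic gradient term by splitting off the exact solution (continuous identity $\ip{\nabla v\cdot\nabla\varphi}{\varphi}=-\tfrac12\ip{\Delta v\,\varphi}{\varphi}$) and controlling the finite element remainder via inverse estimates and the robust bound of Theorem~\ref{thm:kuznetsov_space_main}, and integrate the $\beta$-term by parts in time to upgrade the rate from $\sqrt{\beta}$ to $\beta$. The only cosmetic differences are the sign convention for the difference, the choice of weight $(1+\kappa\pt\solhbeta)$ versus $(1+\kappa\pt\solhzero)$ in the energy, and keeping the dissipative part $\beta\norm{\nabla\pt w_h}^2_{\Ltwo}$ on the left rather than treating the whole damping term as a forcing; none of these affects the argument.
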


\subsection{Main results for a fully discrete semi-implicit Euler method}

Our next results concern a full discretization of \eqref{ibvp:Kuznetsov} based on a semi-implicit Euler method. 
To present it, we introduce the discrete derivative $\ptau$ as follows:
\begin{subequations} \label{eq:def_ptau}
\begin{align+}  
	\ptau a^n &= \frac{1}{\tau}  (a^n -a^{n-1} )  ,
	\, \,
	n \geq 1,
&	
	\ptau^{k+1} a^n &= \ptau  \ptau^k a^n,
	\, \,
	k \geq 0 \,,
\intertext{	and the notational conventions}
	\ptau a^0 &= a^0 ,
&
		\ptau^{n+j} a^{n} &=   	\ptau^{n} a^{n},
		\, \,
	j \geq 0 \,.
\end{align+}
\end{subequations}
Then for $1 \leq n \leq N$
with
 $(N+1) \tau \leq T$, we consider
\begin{equation} \label{eq:Euler}
\begin{aligned}
\ip{
	(1+\kappa \ptau \solhn{n}) \ptau^2 \solhn{n+1} 
}{\varphi_h} 
&\, 
-
\ip{ c^2 \Delta_h \solhn{n+1}
}{\varphi_h} 
  -
  \ip{ \beta \Delta_h  \ptau\solhn{n+1} 
  }{\varphi_h} 
\\
&\, +
\ip{ \ell \nabla \solhn{n} \cdot \nabla \ptau \solhn{n+1}}{\varphi_h} 
 =
 \ip{ f_h^{n+1}
}{\varphi_h}  ,
\end{aligned}
\end{equation}
for all $\varphi_h \in \Vh$.
We set the initial conditions as 
\begin{subequations} \label{eq:def_Euler_inits}
\begin{align+} 
		\solhn{0} &\coloneqq \projRitz \solinit, 
		\qquad
		\solhn{1} \coloneqq 
		\projRitz \bigl( \solinit
		+ \tau  
		\soltinit
		+	\frac{\tau^2}{2} w_0 \bigr) ,
		\label{eq:def_Euler_inits_u_1}
\intertext{using the approximation $w_0\approx \pt^2 \sol(\tn{0}) $ defined as}
		w_0 &=	
		\projRitz \Bigl( 	( 1 +  \kappa \soltinit )^{-1} \bigl(
		c^2 \Delta \solinit + \beta \Delta \soltinit - \ell \nabla \solinit \cdot \nabla \soltinit  + f
		\bigr)
		\Bigr) ,
\end{align+}
\end{subequations}
such that $\solhn{1}$
resembles a projected Taylor approximation to $\sol(\tn{1})$.
The quadratic gradient nonlinearity forces us to assume the following 
{CFL-type condition}:
\begin{equation} \label{eq:def_CFL_Euler}
	\tau \leq C h^{1+d/6 + 2\varepsilon}
\end{equation}
for some arbitrarily chosen $\varepsilon>0$. Without loss of generality, we assume $\varepsilon \in (0, \frac12 - \frac{d}{12})$,
such that $1+d/6 + 2\varepsilon < 2$. 

\begin{mytheorem}[Robust fully discrete error bounds] \label{thm:ErrorSemiImplicit}
Let $k \geq 2$ and $\beta \in [0, \bar{\beta}]$ for some $\bar{\beta}>0$,
and let the {CFL-type condition} \eqref{eq:def_CFL_Euler} hold. Furthermore, assume that $f, f_h \in C^1(0,T; \Ltwo)$ are such that
\begin{equation} \label{assumption_accuracy_fh_full}
	\begin{aligned}
		\|f-f_h\|_{C^1(\Ltwo)} \lesssim  h^k ,
	\end{aligned}
\end{equation}
{where the hidden constant does not depend on $h$, $\tau$, or $\beta$}, 
and the initial values are chosen as in \eqref{eq:def_Euler_inits}.
If $\solhn{n}$ is the solution of \eqref{eq:Euler},
and the solution $\sol \in \calU$ of \eqref{eq:Kuznetsov} satisfies 
\eqref{assumptions_exact_sol},
then for
for $h \leq h_0$ and $\tau \leq \tau_0$,
it holds
\begin{equation} \label{eq:thm_ErrorSemiImplicit_estimate}
\begin{aligned}
\norm{\pt^2 \sol(\tn{n}) - \ptau^2 \solhn{n}}^2_{\Ltwo}
&+  
\norm{\nabla \pt \sol(\tn{n}) - \nabla \ptau \solhn{n} }^2_{\Ltwo}  
\\
&+
\tau \sum_{j=1}^n
\norm{\nabla \sol(\tn{j}) - \nabla \solhn{j} }^2_{L^6(\Omega)}  
\leq C  \bigl( \tau + h^k \bigr)^2,
\end{aligned}
\end{equation}
for all $n=2, \ldots, N+1$,
where the constant $C$ is independent of $h$, $\tau$, and $\beta$.
\end{mytheorem}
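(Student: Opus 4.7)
The plan is to adapt the strategy used in the spatially semi-discrete analysis (in particular Proposition~\ref{prop:fe_FirstEstimate}) to the time-discrete setting, deriving energy estimates directly for the fully discrete error equation. To this end, I split the total error via the Ritz projection as $\sol(\tn{n}) - \solhn{n} = (\sol(\tn{n}) - \projRitz \sol(\tn{n})) + (\projRitz \sol(\tn{n}) - \solhn{n}) \eqqcolon \eta^n + \errhn{n}$, and use standard approximation properties of $\projRitz$ to control $\eta^n$ and its first two discrete time differences at rate $h^k$ in the relevant norms. All subsequent effort focuses on the discrete error $\errhn{n}$. Subtracting the scheme \eqref{eq:Euler} from the continuous equation \eqref{eq:Kuznetsov} evaluated at $\tn{n+1}$ yields an error equation of the form
\begin{equation*}
\begin{aligned}
\ip{(1+\kappa \ptau \solhn{n}) \ptau^2 \errhn{n+1}}{\phih}
&- \ip{c^2 \Deltah \errhn{n+1}}{\phih}
- \ip{\beta \Deltah \ptau \errhn{n+1}}{\phih} \\
&+ \ell \ip{\nabla \solhn{n} \cdot \nabla \ptau \errhn{n+1}}{\phih}
= \ip{\resolvent{n+1}}{\phih},
\end{aligned}
\end{equation*}
where $\resolvent{n+1}$ collects the Ritz defect, the source perturbation $f-f_h$ controlled via \eqref{assumption_accuracy_fh_full}, and the temporal consistency residuals from replacing $\pt$, $\pt^2$ by $\ptau$, $\ptau^2$. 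The smoothness $\sol \in \calU$ gives Taylor bounds of order $\tau$ in $L^2$ for each residual, with constants independent of $\beta$, which is essential for $\beta$-uniformity.

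I then test the error equation with $\ptau^2 \errhn{n+1}$ (to produce the leading $\ptau^2 \errhn{}$ and $\nabla \ptau \errhn{}$ energy contributions) and, since robustness in $\beta$ requires control of $\pt^2$ uniformly, also test an appropriate discrete time-differenced form of the error equation. Discrete summation by parts in time, together with the non-degeneracy $1+\kappa \ptau \solhn{n} \geq \quasilowerbound/2$, produces a discrete energy of the form
\begin{equation*}
\mathcal{E}^n \coloneqq \norm{\ptau^2 \errhn{n}}_{\Ltwo}^2 + \norm{\nabla \ptau \errhn{n}}_{\Ltwo}^2 + \beta \, \tau \sum_{j \leq n} \norm{\nabla \ptau^2 \errhn{j}}_{\Ltwo}^2.
\end{equation*}
The quadratic gradient term is treated by the discrete counterpart of the identity $\ip{\nabla \sol \cdot \nabla \pt^2 \sol}{\pt^2 \sol} = -\tfrac12 \ip{\Delta \sol \, \pt^2 \sol}{\pt^2 \sol}$ mentioned in the introduction, integrating by parts against $\Deltah \solhn{n}$ and exploiting the smallness of the remaining error factors.

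The argument is closed by an induction in $n$. Since \eqref{eq:Euler} is linear in $\solhn{n+1}$, existence and uniqueness at each step is automatic as long as the induction hypothesis guarantees $1+\kappa \ptau \solhn{n} \geq \quasilowerbound/2 > 0$. This is maintained by writing $\kappa \ptau \solhn{n} = \kappa \pt \sol(\tn{n}) + \mathcal{O}(\tau) + \kappa \ptau \errhn{n}$ and bounding the last contribution in $\Linf$ via an inverse estimate applied to the a priori bound $\norm{\ptau \errhn{n}}_{\Ltwo} \lesssim \tau + h^k$. The same mechanism is used to control the residue of the gradient nonlinearity in $\Linf$-type norms; matching the inverse constants with the available $\tau + h^k$ decay produces precisely the CFL-type condition \eqref{eq:def_CFL_Euler}. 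A discrete \Gronwall inequality applied to $\mathcal{E}^n$ then yields \eqref{eq:thm_ErrorSemiImplicit_estimate}, with the base case provided by the specific choice of $\solhn{0},\solhn{1}$ in \eqref{eq:def_Euler_inits}, which makes $\ptau \errhn{1}$ and $\ptau^2 \errhn{2}$ of the required order.

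The main obstacle I expect is the discrete manipulation of the quadratic-gradient term $\ell \ip{\nabla \solhn{n} \cdot \nabla \ptau \errhn{n+1}}{\ptau^2 \errhn{n+1}}$: unlike the continuous setting, $\nabla \solhn{n}$ lives at step $n$ while the test function lives at step $n+1$, so any discrete ``integration by parts in time'' generates additional cross-terms like $\ptau \nabla \solhn{n} \cdot \nabla \ptau \errhn{n+1}$ that must be absorbed without spoiling the $\beta$-uniform structure. Combined with the requirement that no estimate implicitly divides by $\beta$ in the dissipative contribution $\beta \ip{\Deltah \ptau \errhn{n+1}}{\cdot}$, this is precisely what dictates the form of the CFL condition \eqref{eq:def_CFL_Euler} and forces the restriction $k \geq 2$.
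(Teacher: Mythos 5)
Your overall architecture (Ritz splitting, discrete error equation, induction in $n$, testing the time-differenced equation with $\ptau^2 \errhn{n+1}$, splitting $\nabla \solhn{n}$ through the exact solution to exploit $\varphi_h \nabla\varphi_h = \tfrac12\nabla \varphi_h^2$, and closing with the CFL condition and a discrete \Gronwall argument) matches the paper's proof of Proposition~\ref{prop:intermediate_step}. However, there is a concrete gap: your discrete energy $\mathcal{E}^n$ omits the quantity $\tau\sum_{j\le n}\norm{\Deltah \errhn{j}}_{\Ltwo}^2$, and you never test the (undifferentiated) error equation with $-\Deltah\errhn{j+1}$. This second testing step (Proposition~\ref{prop:Deltah_errh_Euler} in the paper) is not optional. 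When you estimate the critical contribution $\ip{\nabla\errhn{j}\cdot\nabla\ptau^2\errhn{j+1}}{\ptau^2\errhn{j+1}}$ without dividing by $\beta$, the inverse estimate on $\nabla\ptau^2\errhn{j+1}$ costs $h^{-1}$ and the $\Linf$ control of $\nabla\errhn{j}$ must go through the discrete embedding $\norm{\nabla\errhn{j}}_{\Linf}\lesssim h^{-d/6}\norm{\Deltah\errhn{j}}_{\Ltwo}$; closing the resulting bound under the CFL condition \eqref{eq:def_CFL_Euler} requires the induction hypothesis $h^{-1-d/6-\varepsilon}\bigl(\tau\sum_j\norm{\Deltah\errhn{j}}_{\Ltwo}^2\bigr)^{1/2}\le C_0$ from \eqref{def:finaln}. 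If instead you fall back on $\norm{\nabla\errhn{j}}_{\Linf}\lesssim h^{-d/2}\norm{\nabla\errhn{j}}_{\Ltwo}\lesssim h^{-d/2}(\tau+h^k)$, the accumulated factor $h^{-1-d/2}(\tau+h^k)$ is not small for $d=3$, $k=2$, so the \Gronwall constant blows up. The $\Deltah$-sum is also what produces, via \eqref{eq:discrete_Sobolev_embedding}, the third term $\tau\sum_j\norm{\nabla\sol(\tn{j})-\nabla\solhn{j}}_{L^6(\Omega)}^2$ in the asserted estimate \eqref{eq:thm_ErrorSemiImplicit_estimate}, which your energy cannot deliver.

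Two smaller points. First, unique solvability of the linear step is not ``automatic'' from non-degeneracy of $1+\kappa\ptau\solhn{n}$ alone: the operator contains the non-sign-definite convection term $\tau\ell\,\nabla\solhn{n}\cdot\nabla$, and injectivity requires the same expansion of $\nabla\solhn{n}$ plus absorption under the CFL condition and the induction hypothesis on $\Deltah\errhn{n}$ (Lemma~\ref{lem:existence_Euler_approximation}). Second, your phrase ``integrating by parts against $\Deltah\solhn{n}$'' should be the continuous Laplacian of the exact solution: the paper writes $\ip{\nabla\solhn{n}\cdot\nabla\varphi_h}{\varphi_h}$ as $-\ip{\Delta\sol(\tn{n})\,\varphi_h}{\varphi_h}$ plus a Ritz-defect term plus the $\nabla\errhn{n}$ term (see \eqref{eq:expansion_critical_term}); an integration by parts against the discrete Laplacian of the numerical solution would not be uniformly bounded in $W^{2,\infty}$-type norms.
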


A non-robust variant of this result for $k=1$ is presented later in 
Theorem~\ref{thm:ErrorSemiImplicit_beta}. As in the semi-discrete case, our fourth main
result confirms that, in the setting of Theorem~\ref{thm:ErrorSemiImplicit}, also the fully discrete solution preserves the asymptotic behavior as $\beta \rightarrow 0$ of the exact solution established in~\cite{kaltenbacher2022parabolic}.

\begin{mytheorem}[Asymptotic-preserving behavior in the inviscid limit]\label{thm:FullyDiscrete_beta_limit}
	Under the assumptions of Theorem~\ref{thm:ErrorSemiImplicit}, for $h \in (0, h_0]$ and $n \in \{2, \ldots, N+1\}$ fixed, the family $\{\solhnbeta{n} \}_{\beta \in (0, \bar{\beta}]}$ of finite element solutions of \eqref{eq:Euler}
	 converges in the discrete energy norm to the finite element solution $\solhnzero{n}$ of the inviscid fully discrete problem (i.e., with $\beta=0$) at a linear rate as $\beta \rightarrow 0$. In other words, 
	\begin{equation} \label{est:fully_discrete_beta}
		\| \ptau \solhnbeta{n} - \ptau \solhnzero{n} \|_{\Ltwo}
	 +
	 \|\nabla (\solhnbeta{n} - \solhnzero{n})\|_{\Ltwo} \leq C \beta,
	\end{equation}
	for all $n=1, \ldots, N+1$,
	where the constant $C>0$ is independent of $\beta$, $h$, and $\tau$.
\end{mytheorem}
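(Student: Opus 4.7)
The plan is to mirror the analytical strategy used for the semi-discrete inviscid-limit result (Theorem~\ref{thm:FE_beta_limit}), but working now at the fully discrete level. Let $E_h^n \coloneqq \solhnbeta{n} - \solhnzero{n}$ denote the difference of the two fully discrete solutions and write down the two versions of \eqref{eq:Euler}, one for $\beta \in (0,\bar{\beta}]$ and one for $\beta = 0$, with identical initial data \eqref{eq:def_Euler_inits}. Observe that the initial discrepancy $E_h^0$ vanishes identically while $E_h^1$ scales linearly in $\beta$, since $w_0$ differs by a $\beta\Delta\soltinit$-contribution inside the same Ritz projection. Subtracting the two schemes yields an error identity of the form
\begin{equation*}
\begin{aligned}
\ip{(1+\kappa\ptau\solhnbeta{n})\ptau^2 E_h^{n+1}}{\varphi_h}
-\ip{c^2\Delta_h E_h^{n+1}}{\varphi_h}
-\ip{\beta\Delta_h\ptau E_h^{n+1}}{\varphi_h}
+\ell\ip{\nabla\solhnbeta{n}\cdot\nabla\ptau E_h^{n+1}}{\varphi_h}
= \ip{R_h^{n+1}}{\varphi_h},
\end{aligned}
\end{equation*}
where the remainder $R_h^{n+1}$ collects \emph{(i)} the purely linear $\beta$-perturbation $\beta\Delta_h\ptau\solhnzero{n+1}$, and \emph{(ii)} nonlinear cross-terms produced by the splittings $\kappa\ptau E_h^n \ptau^2\solhnzero{n+1}$ and $\ell\,\nabla E_h^n\cdot\nabla\ptau\solhnzero{n+1}$.

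Next, I would test with $\varphi_h = \ptau E_h^{n+1}$ and follow the energy machinery already developed in Section~\ref{sec:Semi_implicit}. The leading coefficient $1+\kappa\ptau\solhnbeta{n}$ stays bounded below by a positive constant uniformly in $\beta$, $h$, $\tau$: this follows from \eqref{assumptions_exact_sol} together with the robust fully discrete error bound \eqref{eq:thm_ErrorSemiImplicit_estimate} combined with an inverse estimate (exactly as used throughout the proof of Theorem~\ref{thm:ErrorSemiImplicit}) so that the discrete non-degeneracy constant is independent of $\beta$. The second-order-difference term together with the $-c^2\Delta_h E_h^{n+1}$ contribution provides, after summation in $n$, the natural discrete energy
\begin{equation*}
\mathcal{E}_h^n \coloneqq \tfrac12 \bigl\| \sqrt{1+\kappa\ptau\solhnbeta{n-1}}\,\ptau E_h^n\bigr\|_{\Ltwo}^2 + \tfrac{c^2}{2}\|\nabla E_h^n\|_{\Ltwo}^2,
\end{equation*}
while the strongly damped term produces a non-negative contribution $\beta\tau\|\nabla\ptau E_h^{n+1}\|_{\Ltwo}^2$ that can simply be discarded on the left.

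The nonlinear gradient term $\ell\ip{\nabla\solhnbeta{n}\cdot\nabla\ptau E_h^{n+1}}{\ptau E_h^{n+1}}$ is handled exactly as in the proof of Theorem~\ref{thm:ErrorSemiImplicit}: one integrates by parts discretely to replace $\nabla\solhnbeta{n}\cdot\nabla\ptau E_h^{n+1}$ by $-\tfrac12\Delta_h\solhnbeta{n}\ptau E_h^{n+1}$-type contributions and absorbs the resulting $\ptau E_h^{n+1}$-quadratic terms into $\mathcal{E}_h^n$ via the uniform $W^{2,\infty}$-bound on $\solhnbeta{n}$ (itself a consequence of \eqref{assumptions_exact_sol} and \eqref{eq:thm_ErrorSemiImplicit_estimate} with an inverse estimate). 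The cross-terms in $R_h^{n+1}$ are bounded by Cauchy--Schwarz and absorbed either into $\mathcal{E}_h^{n+1}$ (after scaling) or left on the right as $C\tau\mathcal{E}_h^{n+1}+C\tau\mathcal{E}_h^n$. The remaining $\beta\Delta_h\ptau\solhnzero{n+1}$-forcing is dominated by
\begin{equation*}
\beta \,|\ip{\Delta_h\ptau\solhnzero{n+1}}{\ptau E_h^{n+1}}| \leq C\beta^2 + C\mathcal{E}_h^{n+1},
\end{equation*}
using that $\|\Delta_h\ptau\solhnzero{n+1}\|_{\Ltwo}$ is uniformly bounded in $h$ and $\tau$ by the $\beta=0$ fully discrete error bound.

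Summing over $n$, invoking a discrete Grönwall inequality (for $\tau$ small enough to absorb the implicit constants), and using $\mathcal{E}_h^0 = 0$ together with $\mathcal{E}_h^1 \leq C\beta^2$ (by the choice of initial data), gives $\mathcal{E}_h^n \leq C\beta^2$ uniformly in $n=1,\dots,N+1$ and in $h$ and $\tau$. This is exactly the estimate \eqref{est:fully_discrete_beta}. I expect the most delicate step to be the uniform control of the nonlinear gradient contribution $\ell\ip{\nabla\solhnbeta{n}\cdot\nabla\ptau E_h^{n+1}}{\ptau E_h^{n+1}}$, since one cannot here use the same $L^6$-integration-in-time that is available in Theorem~\ref{thm:ErrorSemiImplicit}; instead one must exploit the a priori $\beta$-independent $W^{1,\infty}$-type bound on $\solhnbeta{n}$ guaranteed by the robust estimate \eqref{eq:thm_ErrorSemiImplicit_estimate} combined with the inverse estimate, which is the place where the {CFL-type condition} \eqref{eq:def_CFL_Euler} is crucially used.
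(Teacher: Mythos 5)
Your overall skeleton matches the paper's proof: subtract the two schemes, test with $\ptau E_h^{j+1}$, telescope the leading and $c^2$ terms via the weighted discrete fundamental-theorem identity (Lemma~\ref{lem:ftc_Euler}), treat the quadratic gradient term through the expansion \eqref{eq:expansion_critical_term} (exact solution $+$ Ritz defect $+$ discrete error, each controlled by the uniform bounds of Lemma~\ref{Lemma:UniformEst_solhn} and the robust estimate \eqref{eq:bounds_intermediate} with the CFL condition), track the $\mathcal{O}(\beta\tau^2)$ discrepancy in $\solhn{1}$ coming from $w_0$, and close with a maximum-over-$n$ plus discrete \Gronwall argument. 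The one place where you genuinely deviate is the $\beta$-forcing term. The paper does \emph{not} estimate $\beta\,\Delta_h\ptau\solhnbeta{j+1}$ (resp.\ $\solhnzero{j+1}$) directly; it rewrites the pairing as $\beta\ip{\nabla\ptau\solhnbeta{j+1}}{\nabla\ptau\solhnbar{j+1}}$ and performs a discrete summation by parts in time \eqref{eq:sum_by_parts}, which shifts the difference quotient off the uncontrolled factor $\nabla\ptau\solhnbar{j+1}$ and onto $\nabla\ptau^2\solhnbeta{j+1}$; only $H^1$-level bounds on the discrete solution (as in \eqref{eq:nabla_pt2_solhbeta}) are then needed. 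You instead bound $\beta\,\norm{\Delta_h\ptau\solhnzero{j+1}}_{\Ltwo}\norm{\ptau E_h^{j+1}}_{\Ltwo}$ by Cauchy--Schwarz, which buys you a shorter argument but requires the stronger claim that $\norm{\Delta_h\ptau\solhnzero{j+1}}_{\Ltwo}$ is uniformly bounded.

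That claim is true under the hypotheses of Theorem~\ref{thm:ErrorSemiImplicit}, but it does not follow ``from the $\beta=0$ fully discrete error bound'' alone, and you should supply the missing chain: write $\ptau\solhnzero{j+1}=\ptau\projRitz\solzero(\tn{j+1})-\ptau\errhnzero{j+1}$; for the first piece use $\Delta_h\projRitz v=\projLtwo\Delta v$ for $v\in H^2(\Om)\cap H^1_0(\Om)$, so that $\norm{\Delta_h\ptau\projRitz\solzero(\tn{j+1})}_{\Ltwo}\lesssim\norm{\solzero}_{W^{1,\infty}(\Htwo)}$; for the second piece use the inverse estimate $\norm{\Delta_h\ptau\errhnzero{j+1}}_{\Ltwo}\lesssim h^{-1}\norm{\nabla\ptau\errhnzero{j+1}}_{\Ltwo}\lesssim h^{-1}(\tau+h^k)$, which is bounded only because of the CFL condition \eqref{eq:def_CFL_Euler} ($h^{-1}\tau\lesssim h^{d/6+2\varepsilon}$) and $k\geq2$. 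Without these three ingredients (Ritz/$\Delta_h$ identity, inverse estimate, CFL), the step is a gap; with them it is fine. Two further minor points: the quadratic term is controlled via the exact solution's $W^{2,\infty}$ regularity and the $\Delta_h$-error bounds, not via a ``$W^{2,\infty}$ bound on $\solhnbeta{n}$'' as you phrase it; and the term that cannot be absorbed is $\nabla\ptau E_h^{j+1}$ (absent from your energy $\mathcal{E}_h^n$), which is precisely why the paper resorts to summation by parts — your route sidesteps that issue rather than resolving it.
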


\subsection{Numerical results} \label{sec:numerics}

In this section, we illustrate our theoretical findings with three numerical experiments. 
We first study the case of a smooth (\emph{a priori} known) exact solution $u$ to show the optimality of the derived convergence rates. 
In the second experiment, we verify the convergence with respect to the vanishing damping parameter $\beta$ for given data without a known solution.
Thirdly, we study a more realistic scenario of a traveling Gaussian pulse. In all experiments, we observe the optimality of our main results.

\vspace{1ex}

\noindent \textbf{Discretization.}
For the discretization in space with Lagrangian finite elements, we use the open-source Python tool \Fenicsx{},
(\url{https://fenicsproject.org/}); see
\cite{BarattaEtal2023} and \cite{AlnaesEtal2014}.
%
For a stable implementation, we introduce the 
auxiliary quantity for the discrete derivative 
\begin{align}
	\solhnt{n+1} = \ptau \solhn{n+1}, \qquad n\geq 0 .
\end{align}
We reformulate this as an update step
\begin{align} \label{eq:update_solhn}
	\solhn{n+1}	
	=
	&\solhn{n}	
	+ \tau \solhnt{n+1} 
\end{align}
once we have computed $\solhnt{n+1}$.
Note that by \eqref{eq:def_Euler_inits} it holds
\begin{equation}
	\solhnt{1} = 	\projRitz \bigl(	\soltinit
	+	\frac{\tau}{2} w_0 \bigr) .
\end{equation}
With \eqref{eq:update_solhn}, we eliminate $\solhn{n+1}$ in \eqref{eq:Euler}
and obtain the following relation for $n \geq 1$: 
\begin{align}
	&\, \begin{multlined}[t] \ip{(1+\kappa \solhnt{n}) \ptau \solhnt{n+1} }{\varphi_h}
		+
		\tau c^2 \ip{ \nabla \solhnt{n+1}	}{ \nabla\varphi_h}
		+	
		\beta \ip{ \nabla \solhnt{n+1}  }{ \nabla \varphi_h}
	\\
	+
		\ell	\ip{  \nabla \solhn{n} \cdot 
			\nabla \solhnt{n+1}  }{\varphi_h} \end{multlined}
	\\
	= 
	&\,	- 
	c^2	 \ip{  \nabla		\solhn{n}    }{\nabla \varphi_h}
	+
	\ip{f_h^{n+1}  }{\varphi_h},
\end{align}
which in turn yields the system
\begin{equation} 	\label{eq:Euler_assemble}
	\begin{aligned} 
		& \begin{multlined}[t] \ip{(1+\kappa \solhnt{n}) \solhnt{n+1}  }{\varphi_h}
		+
		c^2 \tau^2	\ip{ \nabla 
			\solhnt{n+1}	}{ \nabla\varphi_h}
		+	
		\tau \beta \ip{ \nabla \solhnt{n+1}  }{ \nabla \varphi_h}
		\\
		+
		\tau  \ell	\ip{  \nabla \solhn{n} \cdot 
			\nabla \solhnt{n+1}  }{\varphi_h}  \end{multlined}
		\\
		= \,
		&\ip{(1+\kappa \solhnt{n}) \solhnt{n}  }{\varphi_h}
		-
		\tau c^2	 \ip{  \nabla		\solhn{n}    }{\nabla \varphi_h}
		+
	 \tau 	\ip{f_h^{n+1}  }{\varphi_h}.
	\end{aligned}
\end{equation}
Since the mass and stiffness matrix change in each time step,
the routines in \Fenicsx{} assemble the mass and stiffness matrix 
and use the \PETSc{} linear algebra backend to
solve the linear system \eqref{eq:Euler_assemble}.
The codes to reproduce the results are available at 
\[
\text{\mycode}.
\]

\subsubsection{Smooth solution}

In the first example, we consider the domain
$\Omega = [0,1] \times [0,1]$
and choose initial data as
\begin{equation} \label{eq:numexp_inits_smooth} 
	\solinit(x) = 
	\spacepar \sin(\pi x_1) 	\sin(\pi x_2), 
	\quad
	\soltinit(x) = 
	\spacepar 
	\timepar
	\sin(\pi x_1) 	\sin(\pi x_2), \quad \spacepar, \timepar>0 
\end{equation}
with the parameters
\begin{equation}  \label{eq:numexp_parameters_smooth}
	\kappa = 0.7, 
	\qquad
	c^2 = 1,
	\qquad
	\ell = 2 ,
\end{equation}
and vary 
$\beta \geq 0$.
The forcing term $f$ is chosen such that the exact solution is given by
\begin{equation} \label{eq:sol_shock}
	\sol (x,t) = 	\spacepar e^{\timepar t} \sin(\pi x_1) 	\sin(\pi x_2).
\end{equation}
%
\indent In Figure~\ref{fig:space_conv_exa_1}, we present the computed error 
\begin{equation}
	\mathbf{E}(t) = \norm{\nabla \pt \sol(t) - \nabla \pt \solh (t) }_{\Ltwo}
\end{equation}
for the semi-discrete method 	\eqref{eq:Kuznetsov_space_discr_full_eq}
at time $t=0.8$, using a small time-step size $\tau =  1.5 \cdot 10^{-3} $. We perform experiments for the space and time discretization with elements of order $k=1,2,3$.\\
\indent We observe convergence of order $k$ until a plateau caused by the temporal discretization is reached. For smaller time-step sizes the plots look qualitatively similar with a lower plateau. 
Further, we observe that the plots for $\beta  = 0 , 10^{-3}, 10^{-2}$
have no visible difference, which is in alignment with Theorem~\ref{thm:kuznetsov_space_main}. 
Note that the case $k=1$ is not covered by Theorem~\ref{thm:kuznetsov_space_main}, but appears to work well in practice even for $\beta=0$.

\begin{figure}[h]
	\centering
	\begin{subfigure}{0.33\textwidth}
\begin{tikzpicture}

\begin{axis}[
legend cell align={left},
legend style={
  fill opacity=0.8,
  draw opacity=1,
  text opacity=1,
  at={(0.97,0.03)},
  anchor=south east,
  draw=white!80!black,
  column sep=7mm,
  /tikz/every odd column/.append style={column sep=0cm},
},
width=0.9\columnwidth,
height=1.1\columnwidth,
log basis x={10},
log basis y={10},
tick align=outside,
tick pos=left,
title={$\beta = 0 $},
x grid style={white!69.0196078431373!black},
xmin=0.004, xmax=0.45,
xmode=log,
xtick style={color=black},
xlabel style={font=\color{white!15!black}},
xlabel={mesh width $h$},
ylabel style={font=\color{white!15!black}},
ylabel={$\mathbf{E}(0.8)$},
y grid style={white!69.0196078431373!black},
ymin=0.7e-05, ymax=0.1,
ymode=log,
ytick style={color=black},
ytick ={1e-5,1e-4,1e-3,1e-2,1e-1},
legend columns=-1,
legend to name=Legendforall,
]
\addplot [semithick, mycolor1,mark=triangle, mark size=3, mark options={solid}]
table {%
0.353553390593274 0.0626437202727693
0.176776695296637 0.0322574974645409
0.0883883476483186 0.0162328891101325
0.0441941738241594 0.00812969596227599
0.0220970869120798 0.00406677283180288
0.01104854345604 0.00203420461328599
0.00552427172802006 0.0010219988327219
};
\addlegendentry{$k=1$}
\addplot [semithick, mydarkgray, dashed, forget plot]
table {%
0.353553390593274 0.0563793482454924
0.176776695296637 0.0281896741227462
0.0883883476483186 0.0140948370613731
0.0441941738241594 0.00704741853068658
0.0220970869120798 0.0035237092653433
0.01104854345604 0.00176185463267166
0.00552427172802006 0.000880927316335844
};
\addplot [semithick, mycolor1 , mark=*, mark size=2, mark options={solid}]
table {%
0.353553390593274 0.00969617201109911
0.176776695296637 0.00249264746068635
0.0883883476483186 0.000630699093590112
0.0441941738241594 0.000165628499426855
0.0220970869120798 6.9903736024764e-05
0.01104854345604 6.02430441144039e-05
0.00552427172802006 6.5497024884193e-05
};
\addlegendentry{$k=2$}
\addplot [semithick, mydarkgray, dashed, forget plot]
table {%
0.353553390593274 0.0087265548099892
0.176776695296637 0.0021816387024973
0.0883883476483186 0.000545409675624327
0.0441941738241594 0.000136352418906082
0.0220970869120798 3.40881047265208e-05
0.01104854345604 8.52202618163032e-06
0.00552427172802006 2.13050654540764e-06
};
\addplot [semithick, mycolor1, mark=diamond*, mark size=3, mark options={solid}]
table {%
0.353553390593274 0.000990671255772769
0.176776695296637 0.000136607327983939
0.0883883476483186 5.96625278946528e-05
0.0441941738241594 5.78645533317343e-05
0.0220970869120798 5.92290442648901e-05
0.01104854345604 4.65882923216794e-05
0.00552427172802006 6.13999806009421e-05
};
\addlegendentry{$k=3$}
\addplot [semithick, mydarkgray , dashed, , forget plot]
table {%
0.353553390593274 0.000891604130195492
0.176776695296637 0.000111450516274437
0.0883883476483186 1.39313145343046e-05
0.0441941738241594 1.74141431678809e-06
};
\end{axis}

\end{tikzpicture}	
	\end{subfigure}%
	\begin{subfigure}{0.33\textwidth}
\begin{tikzpicture}

\begin{axis}[
legend cell align={left},
legend style={
  fill opacity=0.8,
  draw opacity=1,
  text opacity=1,
  at={(0.97,0.03)},
  anchor=south east,
  draw=white!80!black
},
width=0.9\columnwidth,
height=1.1\columnwidth,
log basis x={10},
log basis y={10},
tick align=outside,
tick pos=left,
title={$\beta = 0.001 $},
x grid style={white!69.0196078431373!black},
xmin=0.004, xmax=0.45,
xmode=log,
xtick style={color=black},
xlabel style={font=\color{white!15!black}},
xlabel={mesh width $h$},
ylabel style={font=\color{white!15!black}},
y grid style={white!69.0196078431373!black},
ymin=0.7e-05, ymax=0.1,
ymode=log,
ytick style={color=black},
ytick ={1e-5,1e-4,1e-3,1e-2,1e-1}
]
\addplot [semithick, mycolor1, mark=triangle, mark size=3, mark options={solid}]
table {%
0.353553390593274 0.0626411257399284
0.176776695296637 0.0322539172756184
0.0883883476483186 0.0162324403986889
0.0441941738241594 0.00812961701201738
0.0220970869120798 0.00406676020071179
0.01104854345604 0.00203417483045658
0.00552427172802006 0.00103478255762307
};
\addplot [semithick, mydarkgray, dashed]
table {%
0.353553390593274 0.0563770131659355
0.176776695296637 0.0281885065829678
0.0883883476483186 0.0140942532914839
0.0441941738241594 0.00704712664574197
0.0220970869120798 0.003523563322871
0.01104854345604 0.00176178166143551
0.00552427172802006 0.000880890830717768
};
\addplot [semithick, mycolor1, mark=*, mark size=2, mark options={solid}]
table {%
0.353553390593274 0.00968983836972371
0.176776695296637 0.00249230101981378
0.0883883476483186 0.000630625202913553
0.0441941738241594 0.000166117308876967
0.0220970869120798 6.94253992402572e-05
0.01104854345604 5.91395973764038e-05
0.00552427172802006 7.3019657148331e-05
};
\addplot [semithick, mydarkgray, dashed]
table {%
0.353553390593274 0.00872085453275134
0.176776695296637 0.00218021363318784
0.0883883476483186 0.000545053408296961
0.0441941738241594 0.000136263352074241
0.0220970869120798 3.40658380185604e-05
0.01104854345604 8.51645950464022e-06
0.00552427172802006 2.12911487616012e-06
};
\addplot [semithick, mycolor1, mark=diamond*, mark size=3, mark options={solid}]
table {%
0.353553390593274 0.000990181131741786
0.176776695296637 0.000136403338092363
0.0883883476483186 5.93555929924288e-05
0.0441941738241594 5.75747750573337e-05
0.0220970869120798 5.84826557665e-05
0.01104854345604 5.8598045507207e-05
0.00552427172802006 6.05128115076362e-05
};
\addplot [semithick, mydarkgray, dashed]
table {%
0.353553390593274 0.000891163018567608
0.176776695296637 0.000111395377320951
0.0883883476483186 1.39244221651189e-05
0.0441941738241594 1.74055277063988e-06
};
\end{axis}

\end{tikzpicture}	
	\end{subfigure}%
	\begin{subfigure}{0.33\textwidth}
\begin{tikzpicture}

\definecolor{color0}{rgb}{0.12156862745098,0.466666666666667,0.705882352941177}
\definecolor{color1}{rgb}{1,0.498039215686275,0.0549019607843137}
\definecolor{color2}{rgb}{0.172549019607843,0.627450980392157,0.172549019607843}
\definecolor{color3}{rgb}{0.83921568627451,0.152941176470588,0.156862745098039}
\definecolor{color4}{rgb}{0.580392156862745,0.403921568627451,0.741176470588235}
\definecolor{color5}{rgb}{0.549019607843137,0.337254901960784,0.294117647058824}

\definecolor{crimson2143940}{RGB}{214,39,40}
\definecolor{darkgray176}{RGB}{176,176,176}
\definecolor{darkorange25512714}{RGB}{255,127,14}
\definecolor{darkturquoise23190207}{RGB}{23,190,207}
\definecolor{forestgreen4416044}{RGB}{44,160,44}
\definecolor{goldenrod18818934}{RGB}{188,189,34}
\definecolor{gray127}{RGB}{127,127,127}
\definecolor{lightgray204}{RGB}{204,204,204}
\definecolor{mediumpurple148103189}{RGB}{148,103,189}
\definecolor{orchid227119194}{RGB}{227,119,194}
\definecolor{sienna1408675}{RGB}{140,86,75}
\definecolor{steelblue31119180}{RGB}{31,119,180}

\begin{axis}[
legend cell align={left},
legend style={
  fill opacity=0.8,
  draw opacity=1,
  text opacity=1,
  at={(0.97,0.03)},
  anchor=south east,
  draw=white!80!black
},
width=0.9\columnwidth,
height=1.1\columnwidth,
log basis x={10},
log basis y={10},
tick align=outside,
tick pos=left,
title={$\beta = 0.01 $},
x grid style={white!69.0196078431373!black},
xmin=0.004, xmax=0.45,
xmode=log,
xtick style={color=black},
xlabel style={font=\color{white!15!black}},
xlabel={mesh width $h$},
ylabel style={font=\color{white!15!black}},
y grid style={white!69.0196078431373!black},
ymin=0.7e-05, ymax=0.1,
ymode=log,
ytick style={color=black},
ytick ={1e-5,1e-4,1e-3,1e-2,1e-1}
]
\addplot [semithick, mycolor1, mark=triangle, mark size=3, mark options={solid}]
table {%
0.353553390593274 0.0626333946588933
0.176776695296637 0.0322373383294901
0.0883883476483186 0.0162304077567336
0.0441941738241594 0.00812921043929322
0.0220970869120798 0.00406667176645006
0.01104854345604 0.00203410473272526
0.00552427172802006 0.00101888172888849
};
\addplot [semithick, mydarkgray, dashed]
table {%
0.353553390593274 0.056370055193004
0.176776695296637 0.028185027596502
0.0883883476483186 0.014092513798251
0.0441941738241594 0.00704625689912552
0.0220970869120798 0.00352312844956277
0.01104854345604 0.0017615642247814
0.00552427172802006 0.000880782112390712
};
\addplot [semithick, mycolor1, mark=*, mark size=2, mark options={solid}]
table {%
0.353553390593274 0.00967337061861386
0.176776695296637 0.00249209114878444
0.0883883476483186 0.000630432306600328
0.0441941738241594 0.000166554329099608
0.0220970869120798 6.74099242395543e-05
0.01104854345604 2.05460447803243e-05
0.00552427172802006 5.27457372125603e-05
};
\addplot [semithick, mydarkgray, dashed]
table {%
0.353553390593274 0.00870603355675247
0.176776695296637 0.00217650838918812
0.0883883476483186 0.000544127097297031
0.0441941738241594 0.000136031774324258
0.0220970869120798 3.40079435810648e-05
0.01104854345604 8.50198589526633e-06
0.00552427172802006 2.12549647381664e-06
};
\addplot [semithick, mycolor1, mark=diamond*, mark size=3, mark options={solid}]
table {%
0.353553390593274 0.000989366128503692
0.176776695296637 0.000135306875486633
0.0883883476483186 5.7037291691433e-05
0.0441941738241594 5.51083681282613e-05
0.0220970869120798 5.64250424158196e-05
0.01104854345604 5.58713573933551e-05
0.00552427172802006 7.2646564084446e-05
};
\addplot [semithick, mydarkgray, dashed]
table {%
0.353553390593274 0.000890429515653323
0.176776695296637 0.000111303689456666
0.0883883476483186 1.39129611820832e-05
0.0441941738241594 1.73912014776041e-06
};
\end{axis}

\end{tikzpicture}	
	\end{subfigure}
	\ref{Legendforall}
	\caption{ 	Convergence of 
		\eqref{eq:Kuznetsov_space_discr_full_eq} with 
		$\norm{\nabla \pt \sol(t) - \nabla \pt \solh (t) }_{\Ltwo}$ 
		at $t=0.8$
		with the parameters
		$\spacepar =  0.1$, $\timepar = 0.5$ in \eqref{eq:sol_shock} for elements of order $k=1,2,3$
		and 
		$\tau \approx 1.5 \cdot 10^{-3}$
		and damping parameters $\beta  = 0 , 10^{-3}, 10^{-2}$ (from left to right).
		The dashed lines indicate order $\mathcal{O}(h^k)$ for $k=1,2,3$.
	}
	\label{fig:space_conv_exa_1}
\end{figure}

\begin{figure}[h]
	\centering
	\begin{subfigure}{0.33\textwidth}
\begin{tikzpicture}

\definecolor{color0}{rgb}{0.12156862745098,0.466666666666667,0.705882352941177}
\definecolor{color1}{rgb}{1,0.498039215686275,0.0549019607843137}

\definecolor{crimson2143940}{RGB}{214,39,40}
\definecolor{darkgray176}{RGB}{176,176,176}
\definecolor{darkorange25512714}{RGB}{255,127,14}
\definecolor{forestgreen4416044}{RGB}{44,160,44}
\definecolor{lightgray204}{RGB}{204,204,204}
\definecolor{mediumpurple148103189}{RGB}{148,103,189}
\definecolor{sienna1408675}{RGB}{140,86,75}
\definecolor{steelblue31119180}{RGB}{31,119,180}

\begin{axis}[
legend cell align={left},
legend style={
  fill opacity=0.8,
  draw opacity=1,
  text opacity=1,
  at={(0.97,0.03)},
  anchor=south east,
  draw=white!80!black
},
width=0.9\columnwidth,
height=1.1\columnwidth,
log basis x={10},
log basis y={10},
tick align=outside,
tick pos=left,
title={$\beta = 0$},
x grid style={white!69.0196078431373!black},
xmin= 5e-4 , xmax=6e-1,
xmode=log,
xlabel={step size $\tau$},
xtick style={color=black},
ylabel style={font=\color{white!15!black}},
ylabel={$\mathbf{E}(0.8)$},
y grid style={white!69.0196078431373!black},
ymin= 2e-5 , ymax= 3e-2,
ymode=log,
ytick style={color=black},
ytick ={1e-6,1e-5,1e-4,1e-3,1e-2,1e-1,1e0}
]
\addplot [semithick, mydarkgray, dashed]
table {%
0.4 0.0178727119696861
0.2 0.00893635598484305
0.1 0.00446817799242152
0.05 0.00223408899621076
0.025 0.00111704449810538
0.0125 0.00055852224905269
0.00625 0.000279261124526345
0.003125 0.000139630562263173
0.0015625 6.98152811315863e-05
0.00078125 3.49076405657931e-05
};
\addplot [semithick, mycolor2 ,mark=triangle, mark size=3, mark options={solid}]
table {%
0.4 0.00708061923719471
0.2 0.00374271333979106
0.1 0.00244199665379418
0.05 0.00145714337144417
0.025 0.000809333056798198
0.0125 0.000430808053575514
0.00625 0.000224619387681119
0.003125 0.000117343818376708
0.0015625 6.02430441144039e-05
0.00078125 3.17342186961756e-05
};
\end{axis}

\end{tikzpicture}	
	\end{subfigure}%
	\begin{subfigure}{0.33\textwidth}
\begin{tikzpicture}

\definecolor{color0}{rgb}{0.12156862745098,0.466666666666667,0.705882352941177}
\definecolor{color1}{rgb}{1,0.498039215686275,0.0549019607843137}

\definecolor{crimson2143940}{RGB}{214,39,40}
\definecolor{darkgray176}{RGB}{176,176,176}
\definecolor{darkorange25512714}{RGB}{255,127,14}
\definecolor{forestgreen4416044}{RGB}{44,160,44}
\definecolor{lightgray204}{RGB}{204,204,204}
\definecolor{mediumpurple148103189}{RGB}{148,103,189}
\definecolor{sienna1408675}{RGB}{140,86,75}
\definecolor{steelblue31119180}{RGB}{31,119,180}

\begin{axis}[
legend cell align={left},
legend style={
  fill opacity=0.8,
  draw opacity=1,
  text opacity=1,
  at={(0.97,0.03)},
  anchor=south east,
  draw=white!80!black
},
width=0.9\columnwidth,
height=1.1\columnwidth,
log basis x={10},
log basis y={10},
tick align=outside,
tick pos=left,
title={$\beta = 0.001$},
x grid style={white!69.0196078431373!black},
xmin= 5e-4 , xmax=6e-1,
xmode=log,
xlabel={step size $\tau$},
xtick style={color=black},
ylabel style={font=\color{white!15!black}},
y grid style={white!69.0196078431373!black},
ymin= 2e-5 , ymax= 3e-2,
ymode=log,
ytick style={color=black},
ytick ={1e-6,1e-5,1e-4,1e-3,1e-2,1e-1,1e0}
]
\addplot [semithick, mydarkgray, dashed]
table {%
0.4 0.0174572073420309
0.2 0.00872860367101543
0.1 0.00436430183550772
0.05 0.00218215091775386
0.025 0.00109107545887693
0.0125 0.000545537729438464
0.00625 0.000272768864719232
0.003125 0.000136384432359616
0.0015625 6.81922161798081e-05
0.00078125 3.4096108089904e-05
};
\addplot [semithick, mycolor2, mark=triangle, mark size=3, mark options={solid}]
table {%
0.4 0.00706801443810927
0.2 0.00373773981523014
0.1 0.0024388434992582
0.05 0.00145444474141835
0.025 0.000807244071886532
0.0125 0.000430366547045497
0.00625 0.000226846619263895
0.003125 0.000121344088252652
0.0015625 5.91395973764038e-05
0.00078125 3.09964618999127e-05
};
\end{axis}

\end{tikzpicture}	
	\end{subfigure}%
	\begin{subfigure}{0.33\textwidth}
\begin{tikzpicture}

\definecolor{color0}{rgb}{0.12156862745098,0.466666666666667,0.705882352941177}
\definecolor{color1}{rgb}{1,0.498039215686275,0.0549019607843137}

\definecolor{crimson2143940}{RGB}{214,39,40}
\definecolor{darkgray176}{RGB}{176,176,176}
\definecolor{darkorange25512714}{RGB}{255,127,14}
\definecolor{forestgreen4416044}{RGB}{44,160,44}
\definecolor{lightgray204}{RGB}{204,204,204}
\definecolor{mediumpurple148103189}{RGB}{148,103,189}
\definecolor{sienna1408675}{RGB}{140,86,75}
\definecolor{steelblue31119180}{RGB}{31,119,180}

\begin{axis}[
legend cell align={left},
legend style={
  fill opacity=0.8,
  draw opacity=1,
  text opacity=1,
  at={(0.97,0.03)},
  anchor=south east,
  draw=white!80!black
},
width=0.9\columnwidth,
height=1.1\columnwidth,
log basis x={10},
log basis y={10},
tick align=outside,
tick pos=left,
title={$\beta = 0.01$},
x grid style={white!69.0196078431373!black},
xmin= 5e-4 , xmax=6e-1,
xmode=log,
xlabel={step size $\tau$},
xtick style={color=black},
ylabel style={font=\color{white!15!black}},
y grid style={white!69.0196078431373!black},
ymin= 2e-5 , ymax= 3e-2,
ymode=log,
ytick style={color=black},
ytick ={1e-6,1e-5,1e-4,1e-3,1e-2,1e-1,1e0}
]
\addplot [semithick, mydarkgray, dashed]
table {%
0.4 0.0161997144143559
0.2 0.00809985720717794
0.1 0.00404992860358897
0.05 0.00202496430179449
0.025 0.00101248215089724
0.0125 0.000506241075448621
0.00625 0.000253120537724311
0.003125 0.000126560268862155
0.0015625 6.32801344310777e-05
0.00078125 3.16400672155388e-05
};
\addplot [semithick, mycolor2, mark=triangle, mark size=3, mark options={solid}]
table {%
0.4 0.00695875874454807
0.2 0.00369388236738839
0.1 0.00240936825196387
0.05 0.0014321577540838
0.025 0.000790555548123994
0.0125 0.000418361991580299
0.00625 0.000215935386834478
0.003125 0.000112692572174108
0.0015625 2.05460447803243e-05
0.00078125 2.87636974686717e-05
};
\end{axis}

\end{tikzpicture}	
	\end{subfigure}
	\caption{Convergence of 
		\eqref{eq:Euler} with
		$\norm{\nabla \pt \sol(\tn{n}) - \nabla \ptau \solhn{n} }_{\Ltwo}$ 
		for $n = N+1$
		with the parameters
		$\spacepar =  0.1$, $\timepar = 0.5$   in \eqref{eq:sol_shock} 
		with $k=2$ and 
		$h \approx 1.1 \cdot 10^{-2}$
		and damping parameters $\beta  = 0 , 10^{-3}, 10^{-2}$ (from left to right).
		The dashed lines indicate order $\mathcal{O}(\tau)$.	
	}
	\label{fig:time_conv_exa_1}
\end{figure}

In Figure~\ref{fig:time_conv_exa_1}, we present the computed error
\begin{equation}
	\mathbf{E}(\tn{n}) = \norm{\nabla \pt \sol(\tn{n}) - \nabla \ptau \solhn{n} }_{\Ltwo}
\end{equation}
for the fully discrete method \eqref{eq:Euler}
at $n=N+1$ with elements of order $k=2$ and $h \approx 1.1 \cdot 10^{-2}$.
As predicted by Theorem~\ref{thm:ErrorSemiImplicit}, we observe convergence or order $\mathcal{O}(\tau)$ independent of the damping parameter $\beta$.
\newpage

\subsubsection{Convergence in the inviscid limit}

In the next experiment, we verify the sharpness of the results in Theorems~\ref{thm:FE_beta_limit} and \ref{thm:FullyDiscrete_beta_limit}. Here, we use
the same domain $\Om$ and initial data as in \eqref{eq:numexp_inits_smooth},
but parameters and source term are chosen as
\begin{equation} 
	\kappa = 0.3, 
	\qquad
	c^2 = 1,
	\qquad
	\ell = 2,
	\qquad
	f = 0 \,,
\end{equation}
with	
$\spacepar =  0.01$ and $\timepar = 1$.\\
\begin{figure}[h]
\begin{tikzpicture}

\definecolor{crimson2143940}{RGB}{214,39,40}
\definecolor{darkgray176}{RGB}{176,176,176}
\definecolor{darkorange25512714}{RGB}{255,127,14}
\definecolor{forestgreen4416044}{RGB}{44,160,44}
\definecolor{lightgray204}{RGB}{204,204,204}
\definecolor{mediumpurple148103189}{RGB}{148,103,189}
\definecolor{orchid227119194}{RGB}{227,119,194}
\definecolor{sienna1408675}{RGB}{140,86,75}
\definecolor{steelblue31119180}{RGB}{31,119,180}

\begin{axis}[
legend cell align={left},
legend style={
  fill opacity=0.8,
  draw opacity=1,
  text opacity=1,
  at={(0.97,0.03)},
  anchor=south east,
},
width=0.9\columnwidth,
height= 0.4\columnwidth,
log basis x={10},
log basis y={10},
tick align=outside,
tick pos=left,
title={},
x grid style={darkgray176},
xmin=0.000707945784384138, xmax=1.41253754462275,
xmode=log,
xtick style={color=black},
y grid style={darkgray176},
ymin=1.94424295055181e-05, ymax=0.401219455502362,
ymode=log,
ytick style={color=black},
xlabel= {$\beta$},
ylabel={$\mathbf{\overline{E}}(0.8)$},
]
\addplot [semithick, mycolor1, mark=diamond*, mark size=3, mark options={solid}]
table {%
	0.001 9.02720008268504e-05
	0.00215443469003188 0.000193821844872005
	0.00464158883361278 0.000414528292482513
	0.01 0.00087922338531398
	0.0215443469003188 0.00183277419451103
	0.0464158883361278 0.00368890438706007
	0.1 0.00694947468023459
	0.215443469003188 0.0117262497407083
	0.464158883361278 0.0170226158362812
	1 0.0216549842540685
};
\addlegendentry{$\tau = 0.1$, $h = 0.3535$}
\addplot [semithick, mygreen, mark=asterisk, mark size=3, mark options={solid}]
table {%
	0.001 0.000159365560115096
	0.00215443469003188 0.000342124647350278
	0.00464158883361278 0.000731484490049027
	0.01 0.00155046764925466
	0.0215443469003188 0.00322712075429208
	0.0464158883361278 0.00647078911640462
	0.1 0.0120743441533374
	0.215443469003188 0.0199360146206233
	0.464158883361278 0.0280808528375304
	1 0.034374842298982
};
\addlegendentry{$\tau = 0.025$, $h = 0.08839$}
\addplot [semithick, mycolor2,mark=triangle, mark size=3, mark options={solid}]
table {%
	0.001 0.000181584089086305
	0.00215443469003188 0.000390151092179689
	0.00464158883361278 0.000832312798613753
	0.01 0.00176532854694606
	0.0215443469003188 0.00367054329134455
	0.0464158883361278 0.00737630154211059
	0.1 0.0137788953235828
	0.215443469003188 0.0226923282522274
	0.464158883361278 0.0316304856349226
	1 0.0382912246952077
};
\addlegendentry{$\tau = 0.00625$, $h = 0.02210$}
\addplot [semithick, mydarkgray, dashed, forget plot]
table {%
	0.001 0.000199742497994936
	0.00215443469003188 0.000430332166753913
	0.00464158883361278 0.000927122548291215
	0.01 0.00199742497994935
	0.0215443469003188 0.00430332166753913
	0.0464158883361278 0.00927122548291215
	0.1 0.0199742497994936
	0.215443469003188 0.0430332166753913
	0.464158883361278 0.0927122548291215
	1 0.199742497994936
};
\end{axis}

\end{tikzpicture}	
	\caption{Convergence of
		$(\solhnzero{n},\ptau \solhnzero{n})$ and $(\solhnbeta{n},\ptau \solhnbeta{n})$
		in the $ \Hone \times \Ltwo$-norm at the end time $t=0.8$
		for different values of $h$ and $\tau$.
		The dashed line indicates order $\mathcal{O}(\beta)$.}
	\label{fig:beta_conv}
\end{figure}

Since the estimates only compare the numerical solution, we do not need an exact or a reference solution.
We use $k=2$ and
compute the difference
between 
$(\solhnzero{n},\ptau \solhnzero{n})$ and $(\solhnbeta{n},\ptau \solhnbeta{n})$
in the $ \Hone \times \Ltwo$-norm, i.e.,
\begin{equation}
	\mathbf{\overline{E}}(\tn{n}) = 
	\norm{\nabla \solhnzero{n}  - \nabla \solhnbeta{n} }_{\Ltwo}
	+
	\norm{\ptau \solhnzero{n}  - \ptau \solhnbeta{n} }_{\Ltwo} ,
\end{equation}
for different values of $\beta$ at the end time $n=N+1$. \\
\indent We observe 
in Figure~\ref{fig:beta_conv}
that for varying values of $h$ and $\tau$, the convergence in $\beta$ is uniform of order $\mathcal{O}(\beta)$.
Different values of $h$ and $\tau$ lead to qualitatively similar pictures,
with a clustering at the dashed line for finer resolutions which confirms the assertions in
Theorems~\ref{thm:FE_beta_limit} and \ref{thm:FullyDiscrete_beta_limit}.

\subsubsection{Gaussian pulse}

In the last experiment, we simulate the propagation of a Gaussian pulse
on the larger domain $\Omega = [-4,4] \times [-4,4]$.
 We use the initial states
\begin{equation} \label{eq:numexp_inits_Gaussain} 
	\solinit(x) = 
	- e^{- |x|^2},
	\qquad
	\soltinit(x) = 
	0 ,
\end{equation}
where although $\solinit$ is not zero on the boundary of $\Omega$,  by the size of the domain it is still 
 within machine precision.
Further, we take the following parameters and source term:
\begin{equation}  \label{eq:numexp_parameters_Gaussian}
	\kappa = -0.29, 
	\qquad
	c^2 = 1,
	\qquad
	\ell = 2,
	\qquad
	f = 0 \,,
\end{equation}
and vary 
$\beta \geq 0$.
Here we have to choose $\kappa> - 0.3$ in order to prevent 
$1 + \kappa \ptau \solhn{n} < 0$ after a short time.
Since we do not have an exact solution, we first compute a reference solution with
finer spatial and temporal resolution, i.e.,
$h_{\mathrm{ref}} \approx 4 \cdot 10^{-2}$
and
$\tau_{\mathrm{ref}} = 4 \cdot  10^{-4}$.
Due to the larger domain, we have to increase the number of elements by a factor $16$, and hence compute the errors only for a coarser resolution.
\begin{figure}[h]
	\centering
	\begin{subfigure}{0.45\textwidth}
\begin{tikzpicture}

\definecolor{color0}{rgb}{0.12156862745098,0.466666666666667,0.705882352941177}
\definecolor{color1}{rgb}{1,0.498039215686275,0.0549019607843137}
\definecolor{color2}{rgb}{0.172549019607843,0.627450980392157,0.172549019607843}
\definecolor{color3}{rgb}{0.83921568627451,0.152941176470588,0.156862745098039}
\definecolor{color4}{rgb}{0.580392156862745,0.403921568627451,0.741176470588235}
\definecolor{color5}{rgb}{0.549019607843137,0.337254901960784,0.294117647058824}

\begin{axis}[
legend cell align={left},
legend style={
	fill opacity=0.8,
	draw opacity=1,
	text opacity=1,
	at={(0.97,0.03)},
	anchor=south east,
	draw=white!80!black,
	column sep=7mm,
	/tikz/every odd column/.append style={column sep=0cm},
},
width=0.9\columnwidth,
height=0.9\columnwidth,
log basis x={10},
log basis y={10},
tick align=outside,
tick pos=left,
x grid style={white!69.0196078431373!black},
xmin=0.07, xmax=3.5,
xmode=log,
xtick style={color=black},
xlabel style={font=\color{white!15!black}},
xlabel={mesh width $h$},
ylabel style={font=\color{white!15!black}},
ylabel={$\mathbf{E}(0.8)$},
y grid style={white!69.0196078431373!black},
ymin=0.7e-02, ymax=1.1e1,
ymode=log,
ytick style={color=black},
legend columns=-1,
legend to name=Legendforall2,
]
\addplot [semithick, mycolor1,mark=diamond*, mark size=3, mark options={solid}]
table {%
2.82842712474619 2.37473509587258
1.4142135623731 0.978462136311972
0.707106781186548 0.469960394992366
0.353553390593274 0.0959993649340826
0.176776695296638 0.0221978854551011
0.0883883476483191 0.0102461694530741
};
\addlegendentry{$\beta = 0.0$}
\addplot [semithick, mycolor2 , mark=*, mark size=2, mark options={solid}]
table {%
2.82842712474619 2.37055280622167
1.4142135623731 0.9759710680071
0.707106781186548 0.448478644901117
0.353553390593274 0.0924286432618954
0.176776695296638 0.0222496536061632
0.0883883476483191 0.0105795545529577
};
\addlegendentry{$\beta = 0.001$}
\addplot [semithick, mygreen, mark=triangle, mark size=3, mark options={solid}]
table {%
2.82842712474619 2.33853689963811
1.4142135623731 0.964141917735042
0.707106781186548 0.364581855754306
0.353553390593274 0.0849877192599953
0.176776695296638 0.0224100752534809
0.0883883476483191 0.00949541641658864
};
\addlegendentry{$\beta = 0.01$}
\addplot [semithick, mydarkgray, dashed, forget plot]
table {%
2.82842712474619 10.6956370516453
1.4142135623731 2.67390926291133
0.707106781186548 0.668477315727832
0.353553390593274 0.167119328931958
0.176776695296638 0.0417798322329897
0.0883883476483191 0.0104449580582475
};
\addlegendentry{$O(h^2)$}
\end{axis}

\end{tikzpicture}	
	\end{subfigure}%
	\begin{subfigure}{0.45\textwidth}
\begin{tikzpicture}

\begin{axis}[
legend cell align={left},
legend style={
	fill opacity=0.8,
	draw opacity=1,
	text opacity=1,
	at={(0.97,0.03)},
	anchor=south east,
	draw=white!80!black,
	column sep=7mm,
	/tikz/every odd column/.append style={column sep=0cm},
},
width=0.9\columnwidth,
height=0.9\columnwidth,
log basis x={10},
log basis y={10},
tick align=outside,
tick pos=left,
x grid style={white!69.0196078431373!black},
xmin=0.4e-03, xmax=0.5,
xmode=log,
xtick style={color=black},
xlabel style={font=\color{white!15!black}},
xlabel={step size $\tau$},
ylabel style={font=\color{white!15!black}},
y grid style={white!69.0196078431373!black},
ymin=0.7e-02, ymax=1.1e1,
ymode=log,
ytick style={color=black},
]
\addplot [semithick, mycolor1,mark=diamond*, mark size=3, mark options={solid}]
table {%
	0.4 1.36967529797562
	0.2 1.03847845800872
	0.1 1.21589407616438
	0.05 0.901621357518555
	0.025 0.523587235433089
	0.0125 0.27531099118114
	0.00625 0.13666019040107
	0.003125 0.0640943156946739
	0.0015625 0.0272167933265117
	0.00078125 0.0102461694530741
};
\addplot [semithick, mycolor2 , mark=*, mark size=2, mark options={solid}]
table {%
	0.4 1.3748961613334
	0.2 1.03376466448738
	0.1 1.20206024797546
	0.05 0.890609763822206
	0.025 0.51774552824441
	0.0125 0.272768736380787
	0.00625 0.135715137792282
	0.003125 0.0639734442312093
	0.0015625 0.0274118117388685
	0.00078125 0.0105795545529577
};
\addplot [semithick, mygreen, mark=triangle, mark size=3, mark options={solid}]
table {%
	0.4 1.42768250927443
	0.2 1.00103001260378
	0.1 1.08798391781214
	0.05 0.796082629006441
	0.025 0.463277081020655
	0.0125 0.244904669010106
	0.00625 0.12185614570575
	0.003125 0.0576320199478143
	0.0015625 0.0250561056785644
	0.00078125 0.00949541641658864
};
\addplot [semithick, mydarkgray, dashed, forget plot]
table {%
	0.4 9.35855199020161
	0.2 4.6792759951008
	0.1 2.3396379975504
	0.05 1.1698189987752
	0.025 0.584909499387601
	0.0125 0.2924547496938
	0.00625 0.1462273748469
	0.003125 0.0731136874234501
	0.0015625 0.036556843711725
	0.00078125 0.0182784218558625
};
\end{axis}
\end{tikzpicture}	
	\end{subfigure}
	\ref{Legendforall2}
	\caption{
		\textbf{Left}: Convergence of 
		\eqref{eq:Kuznetsov_space_discr_full_eq} with 
		$\norm{\nabla \pt \sol(t) - \nabla \pt \solh (t) }_{\Ltwo}$ 
		at $t=0.8$ for elements of order $k=2$
		and 
		$\tau \approx 7.8 \cdot 10^{-4}$
		and damping parameters $\beta  = 0 , 10^{-3}, 10^{-2}$.
		The dashed line indicates order $\mathcal{O}(h^2)$.
		\textbf{Right}:
		Convergence of 
		\eqref{eq:Euler} with
		$\norm{\nabla \pt \sol(\tn{n}) - \nabla \ptau \solhn{n} }_{\Ltwo}$ 
		for $n = N+1$
		with $k=2$ and 
		$h \approx 9 \cdot 10^{-2}$
		and damping parameters $\beta  = 0 , 10^{-3}, 10^{-2}$.
		The dashed line indicates order $\mathcal{O}(\tau)$.	
	}
	\label{fig:time_conv_exa_2}
\end{figure}
~\\
In Figure~\ref{fig:time_conv_exa_2}, we observe that also in this example we have convergence of optimal order uniformly in the damping parameter $\beta$.

\section{Uniform finite element analysis} \label{sec:FE_analysis}
	
%
%
%
%
%
	

	\indent	In this section, we conduct a $\beta$-uniform analysis of the semi-discrete problem \eqref{eq:Kuznetsov_space_discr} with approximate data \eqref{approx_data}.  We begin by discussing the general strategy. Due to the type of quasilinearity present in the problem and the need to conduct estimates uniformly in $\beta$, one would have to resort to higher-order Sobolev spaces to mimic the approach of the $\beta$-uniform well-posedness analysis of the Kuznetsov equation in~\cite{kaltenbacher2022parabolic}.   As we cannot exploit such global spatial smoothness arguments for the approximate solution, we rely instead on inverse finite element estimates in careful combination with working with a time-differentiated problem given by
		\begin{equation} \label{eq:Kuznetsov_space_discr_no_kappa_pt}
			\begin{aligned}
		\begin{multlined}[t]	\ip{( 1 +  \kappa \pt \solh )\pt^3 \solh }{ \phih}+\kappa\ip{ (\pt^2 \solh)^2 }{ \phih}
			- 
			\ip{c^2 \Deltah \pt \solh }{ \phih}	\\- 
			\ip{\beta \Deltah \pt^2 \solh }{ \phih}
				+
			\ell\ip{ \nabla \pt \solh \cdot \nabla \pt \solh }{ \phih}
			+
			\ell\ip{ \nabla \solh \cdot \nabla \pt^2 \solh }{ \phih} \\
			=
			\ip{\pt f_h }{\phih}.
		\end{multlined}
		\end{aligned}
	\end{equation}
The ``problematic" nonlinear term in \eqref{eq:Kuznetsov_space_discr_no_kappa_pt} is the one involving $\ell \nabla \solh \cdot \nabla \pt^2 \solh$, and it has to be treated as a right-hand side perturbation. In the literature, error bounds for nonlinear (wave-type) problems are often established via some variant of a fixed-point argument for the numerical solution $\solh$ which combines existence and error analysis; see, e.g.,~\cite{makridakis1993finite, ortner2007discontinuous, nikolic2019priori, shao2022discontinuous} and the references provided therein. However, such strategies do not transfer easily to our setting as applying an inverse bound to estimate  $\nabla \pt^2 \solh$ would prevent the fixed-point iterates to match in the order of $h$-convergence. 
\\
	\indent
		Our finite element analysis instead builds upon that of~\cite{hochbruck2022error} to first show that an accurate approximate solution exists $\solh$ on a discretization-dependent time interval $[0, \finalth]$.   We then derive uniform estimates for 
		\[\errh = \projRitz \sol -\solh,\]  
		which in turn allow extending the existence interval and optimal error bounds to the whole time interval $[0,T]$. Crucially, with this approach we can exploit the polynomial structure of the nonlinearity and the fact that
	 \[ \pt^2 \errh  \nabla \pt^2 \errh = \frac12 \nabla (\pt^2 \errh)^2 \]
	 to compensate inverse estimates with smallness conditions
	 on the error $\errh$; see Proposition~\ref{prop:fe_FirstEstimate} for details. 
\subsection{Auxiliary results} Before we turn to the proofs of the main results in this section, we recall the relevant known estimates from the literature that we employ frequently within our analysis. We rely on the approximation properties of the Ritz projection for $0\leq \ell \leq k$:
%
	\begin{align} \label{eq:projRitz_approx}
	\norm{\varphi - \projRitz \varphi}_{L^p(\Om)}	
	+
	h \norm{\varphi - \projRitz \varphi}_{W^{1,p}(\Om)} &\leq C h^{\ell+1}  \norm{\varphi}_{W^{\ell+1,p}(\Omega)},
	\quad \varphi \in W^{\ell+1,p}(\Omega),
\end{align}
for all $2\leq p \leq \infty$; see, for example, \cite[Thm.\ 8.5.3]{BreS08}.
In addition, we have the following bounds for the interpolant:
\begin{equation}  \label{eq:Ih_approx}
	\norm{\varphi - \Ih \varphi}_{L^p(\Om)} 
	+
	h
	\norm{\varphi - \Ih \varphi}_{W^{1,p}(\Om)} \leq C h^{\ell+1}  \norm{\varphi}_{W^{\ell+1,p}(\Om)},
	\quad \varphi \in W^{\ell+1,p}(\Om),
\end{equation}
for $2 \leq p \leq \infty$ and $1 \leq \ell \leq k$.

For $\phih \in \Vh$
also
the discrete Sobolev embedding
%
\begin{equation} \label{eq:discrete_Sobolev_embedding}
	\norm{\phih}_{ \Linf} + \norm{\phih}_{W^{1,6}(\Om)} \leq C \norm{\Deltah \phih}_{\Ltwo}
\end{equation}
with a constant $C$ independent of $h$ is heavily used, see for example~\cite{Doe23, FujSS01,SuzF86}.
%
Furthermore, we rely on the following inverse estimates:
\begin{subequations} \label{eq:inv_estimate}
	\begin{align+}
		\norm{ \nabla \varphi_h}_{\Ltwo} &\leq C h^{-1} \norm{ \varphi_h}_{\Ltwo},
		\\
		\norm{\Delta_h \varphi_h}_{\Ltwo} &\leq C h^{-1} \norm{ \nabla \varphi_h}_{\Ltwo},
		\\
		\norm{\varphi_h}_{\Linf} &\leq C h^{-d/p} \norm{ \varphi_h}_{L^p(\Omega)},
	\end{align+}
\end{subequations}
for $\varphi_h \in \Vh$ and $p\in[1,\infty]$, 
with constants independent of $h$.

\subsection{Finite element analysis} 
	We begin the analysis by defining the (possibly $h$-dependent) time $\finalth$ as follows:
	\begin{equation} \label{def:finaltimeh}
		\begin{aligned}
			\finalth \coloneqq \sup  \Big \{  t \in (0,T] \mid
	\	&\text{a unique solution }	\solh \in H^{3}(0,t; V_h) \text{ of \eqref{eq:Kuznetsov_space_discr_full_eq} exists, and} \\
			%
			&\, h^{-1-d/6} \norm{\pt^2 \errh(s)}_{\Ltwo} \leq C_0,  \\
			&\, h^{-1-d/6} \norm{\nabla \pt \errh(s)}_{\Ltwo}
			\leq C_0 \text{ for all } s \in [0,t]  \Big  \}
		\end{aligned}
	\end{equation}
for some $C_0>0$. Our first task is to establish that this set is non-empty. To this end, we estimate $\norm{\pt^2 \errh}_{\Ltwo}$ and $\norm{\nabla \pt \errh}_{\Ltwo}$ at initial time.
	\begin{lemma} \label{lemma:est_errh_zero}
			Under the assumptions of Theorem~\ref{thm:kuznetsov_space_main},
		with approximate initial values chosen to be the Ritz projections of the exact ones as in 
		\eqref{eq:init_choice_space} and $\pt^2 \solh(0)$ determined by \eqref{third_ic}, the following estimate holds:
			\begin{equation}
						\norm{\pt^2 \errh(0)}_{\Ltwo} +\norm{\nabla \pt \errh(0)}_{\Ltwo} \leq C h^k
				\end{equation}
				with a constant $C>0$ independent of $h$ and $\beta$.
		\end{lemma}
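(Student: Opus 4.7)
The bound on $\norm{\nabla \pt \errh(0)}_{\Ltwo}$ is immediate from the choice of initial data: since $\pt \solh(0) = \projRitz \soltinit = \projRitz \pt \sol(0)$, we have $\pt \errh(0) = 0$ identically, so $\nabla \pt \errh(0)=0$. The entire work therefore goes into estimating $\pt^2 \errh(0) = \projRitz \pt^2 \sol(0) - \pt^2 \solh(0) \in \Vh$.

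My plan is to test the defining relation \eqref{third_ic} of $\pt^2 \solh(0)$ with $\phih = \pt^2 \errh(0)$ and subtract the original PDE \eqref{eq:Kuznetsov} evaluated at $t=0$ and tested against the same $\phih$. Using the defining property of the Ritz projection together with the regularity $\solinit, \soltinit \in H^2\cap H_0^1$, the discrete Laplacian terms are rewritten via $-\ip{\Delta_h \projRitz \solinit}{\phih} = \ip{\nabla \solinit}{\nabla \phih} = -\ip{\Delta \solinit}{\phih}$ and likewise for $\soltinit$, so the second-order terms cancel exactly. What remains, after splitting
\begin{equation*}
(1+\kappa\soltinit)\pt^2\sol(0)-(1+\kappa\projRitz\soltinit)\pt^2\solh(0) = (1+\kappa\projRitz\soltinit)\bigl(\pt^2\sol(0)-\pt^2\solh(0)\bigr) + \kappa(\soltinit-\projRitz\soltinit)\pt^2\sol(0),
\end{equation*}
and then writing $\pt^2\sol(0)-\pt^2\solh(0) = (\pt^2\sol(0)-\projRitz\pt^2\sol(0)) + \pt^2\errh(0)$, is an identity of the form
\begin{equation*}
\ip{(1+\kappa\projRitz\soltinit)\pt^2\errh(0)}{\pt^2\errh(0)} = \mathcal{R}_h(\pt^2\errh(0)),
\end{equation*}
where $\mathcal{R}_h$ collects projection-error residuals.

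To close the estimate, I would first observe that the $L^\infty$-approximation property of the Ritz projection \eqref{eq:projRitz_approx} combined with $\soltinit \in W^{1,\infty}$ (from $\sol\in\calU$) gives $\norm{\projRitz\soltinit - \soltinit}_{\Linf}\to 0$ as $h\to 0$, so that for $h\leq h_0$ the non-degeneracy $1+\kappa\projRitz\soltinit \geq \tilde{\gamma}/2$ holds pointwise, yielding coercivity of the left-hand side in $\|\pt^2\errh(0)\|_{\Ltwo}^2$. On the right-hand side, each residual is $O(h^k)$: the $L^2$-Ritz error gives $\norm{\pt^2\sol(0)-\projRitz\pt^2\sol(0)}_{\Ltwo}\lesssim h^k$ (using $\pt^2\sol(0)\in H^{k+1}(\Om)$ obtained from $\sol\in\calU$ via the time-continuous embedding of $H^3(0,T;H^{k+1}(\Om))$) and $\norm{\soltinit-\projRitz\soltinit}_{\Ltwo}\lesssim h^{k+1}$; the source-term gap contributes $\norm{f(0)-f_h(0)}_{\Ltwo}\lesssim\norm{f-f_h}_{H^1(\Ltwo)}\lesssim h^k$ by \eqref{assumption_accuracy_fh} and the embedding $H^1(0,T)\hookrightarrow C([0,T])$; and the nonlinear gradient contribution is handled by
\begin{equation*}
\nabla\solinit\cdot\nabla\soltinit - \nabla\projRitz\solinit\cdot\nabla\projRitz\soltinit = \nabla\solinit\cdot\nabla(\soltinit-\projRitz\soltinit) + \nabla(\solinit-\projRitz\solinit)\cdot\nabla\projRitz\soltinit,
\end{equation*}
estimated in $L^2$ using $\solinit,\soltinit\in W^{1,\infty}$ together with \eqref{eq:projRitz_approx} to obtain an $O(h^k)$ bound. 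Applying Cauchy--Schwarz to absorb one factor of $\norm{\pt^2\errh(0)}_{\Ltwo}$ then yields the claim with a constant independent of $\beta$.

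The argument contains no real obstacle, as everything is evaluated at a single time and the non-degeneracy condition at $t=0$ inherited from the continuous setting survives the Ritz projection provided $h$ is small enough; the only mild subtlety is ensuring that the required $L^\infty$-closeness of $\projRitz\soltinit$ holds uniformly in $\beta$, which is automatic from $\sol\in\calU$ and the $\beta$-independent bound \eqref{assumptions_exact_sol}.
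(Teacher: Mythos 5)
Your proposal is correct and follows essentially the same route as the paper: both reduce the claim to the trivial identity $\pt\errh(0)=0$ plus a coercivity estimate for $\pt^2\errh(0)$ obtained by subtracting \eqref{third_ic} from the (projected) exact equation at $t=0$, establishing the non-degeneracy $1+\kappa\pt\solh(0)\geq\gamma>0$ for small $h$ via $L^\infty$-approximation of the Ritz projection, and bounding the resulting defect by $O(h^k)$ before testing with $\pt^2\errh(0)$. The paper packages the residual as a defect $\delta_h(0)$ in a weak form satisfied by $\projRitz\sol$ rather than cancelling the elliptic terms explicitly as you do, but this is only a cosmetic difference.
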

\begin{proof}
With our choice of the approximate initial data,  $\errh(0)=\pt \errh(0)=0$ and thus trivially
\[
\norm{\nabla \pt \errh(0)}_{\Ltwo} \leq C h^k.
\]
It remains to estimate $\pt^2 \errh(0)$. We note that the Ritz projection of $u$ satisfies the following problem at $t=0$ :
\begin{equation} \label{weakform_projRitz}
	\begin{aligned}
		\begin{multlined}[t]	
			\ip{( 1 +  \kappa \pt {\solh}(0))\pt^2 \projRitz \sol (0)}{ \phih}
			- 
			\ip{c^2 \Deltah \projRitz \sol(0)   }{\phih}- 
			\ip{\beta \Deltah \pt \projRitz \sol (0)  }{\phih}
			\\	+
			\ell \ip{ \nabla {\solh(0)} \cdot \nabla \pt \projRitz \sol (0) }{\phih}
			=
			\ip{f_h(0) }{\phih} 
			+
			\ip{\delta_h(0)}{\phih} \end{multlined}
	\end{aligned}
\end{equation}
for all $\phih \in V_h$, with the defect at zero satisfying
\begin{equation}
	\begin{aligned}
		\ip{\delta_h(0)}{\phih} =&\, \begin{multlined}[t]
			\ip{( 1 + \kappa \pt {\solh}(0))\pt^2 \projRitz \sol(0) -( 1 + \kappa \pt  \sol(0))\pt^2  \sol(0) }{\phih}
			\\
			+ \ell \ip{\nabla  {\solh(0)} \cdot \nabla \pt \projRitz \sol(0)  -\ell \nabla  \sol(0) \cdot \nabla \pt  \sol(0)}{\phih}
			\\
			+\ip{f(0)-f_h(0)}{\phih}.
		\end{multlined}
	\end{aligned}
\end{equation}
Since $\pt^2 \solh(0)$ is determined by \eqref{third_ic}, by subtracting \eqref{third_ic} from \eqref{weakform_projRitz} and using the fact that $\errh(0)=\pt \errh(0)=0$, we see that $\pt^2 \errh(0)$ solves
\begin{align} \label{weakform_errh}
	\ip{(1+\kappa \pt \solh(0)) \pt^2 \errh(0) }{ \phih}
	=  \ip{\delta_h(0)}{\phih} 
\end{align}
for all $\phih \in V_h$. By the inverse estimates \eqref{eq:inv_estimate} and 
the approximation properties of the Ritz projection stated in \eqref{eq:projRitz_approx}, we have
\begin{align} \label{eq:init_estimate_u_1}
	\norm{\pt \solh (0)}_{\Linf} 
	&\leq \norm{\soltinit}_{\Linf}
	 + 
	 \norm{ \soltinit - \Ih \soltinit}_{\Linf}
	 +
	 \norm{\Ih \soltinit - \projRitz \soltinit}_{\Linf}
	\\
	&\leq \norm{\soltinit}_{\Linf} +C h^{1-d/4} \|\soltinit\|_{W^{1,4}(\Omega)}.
\end{align}
Thus, for sufficiently small $h\leq h_0$ (relative to $\soltinit$), we can guarantee that 
\[
|\kappa| \|\pt \solh(0)\|_{\Linf}  <1.
\]
This further implies that there exists $\gamma>0$, independent of $h$ and $\beta$, such that
\begin{align}\label{gamma_nondegeneracy}
	1 +\kappa \pt \solh(0) \geq \gamma >0.
\end{align} 
By the approximation properties of the Ritz projection, and the accuracy of $f_h$ assumed in \eqref{assumption_accuracy_fh}, we have
\begin{equation} \label{est_defect_zero}
		\norm{\delta_h(0)}_{\Ltwo} \lesssim h^k.
\end{equation}
Therefore, by using $\phih= \pt^2 \errh(0)$ in \eqref{weakform_errh} and relying on \eqref{gamma_nondegeneracy} and \eqref{est_defect_zero}, we immediately obtain
\begin{equation} \label{est_errh_zero}
	\norm{\pt^2 \errh(0)}_{\Ltwo} \lesssim h^k,
\end{equation}
which concludes the proof.
\end{proof}	

We next aim to prove that $\finalth>0$ by applying a local version of the Picard--Lindel\"of theorem to the time-differentiated semi-discrete problem.
	\begin{lemma} \label{Lemma:Initial_bounds}
			Under the assumptions of Theorem~\ref{thm:kuznetsov_space_main}, we have
	 $\finalth > 0$.
	\end{lemma}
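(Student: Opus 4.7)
The plan is to apply a local version of Picard--Lindelöf to the semi-discrete problem, viewed as a second-order nonlinear ODE system in the finite-dimensional space $V_h$, and then to combine Lemma~\ref{lemma:est_errh_zero} with continuity in time to verify the two smallness conditions in \eqref{def:finaltimeh} on a small interval $[0,t_2]$ with $t_2>0$.

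Fixing a basis $\{\varphi_j\}$ of $V_h$ and expanding $\solh(t)$ in it, the equation \eqref{eq:Kuznetsov_space_discr} becomes a second-order ODE for the coefficient vector. The mass-type matrix $M(v)$ with entries $\ip{(1+\kappa v)\varphi_i}{\varphi_j}$ is symmetric positive definite whenever $1+\kappa v\geq \gamma>0$ pointwise in space. The inverse-estimate argument already carried out in the proof of Lemma~\ref{lemma:est_errh_zero} (see \eqref{eq:init_estimate_u_1}--\eqref{gamma_nondegeneracy}) guarantees this non-degeneracy at $t=0$ for $h\leq h_0$ small enough; by continuity of $\pt \solh$ in time, it persists on a small interval. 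On such an interval, the ODE can be solved for $\pt^2 \solh$ in terms of $(\solh,\pt \solh)$ with a smooth (in fact, analytic) nonlinearity and a right-hand side that is $L^2$ in time. Picard--Lindelöf then yields a unique local solution on some interval $[0,t_1]$ with $t_1>0$, and $f_h\in H^1(0,T;\Ltwo)$ combined with differentiating the coefficient ODE in time produces $\solh\in H^3(0,t_1;V_h)$.

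To verify the two smallness conditions, Lemma~\ref{lemma:est_errh_zero} gives $\norm{\pt^2\errh(0)}_{\Ltwo}+\norm{\nabla\pt\errh(0)}_{\Ltwo}\lesssim h^k$, so the quantities appearing in \eqref{def:finaltimeh} are of order $h^{k-1-d/6}$ at $t=0$. Since $k\geq 2$ and $d\leq 3$, the exponent $k-1-d/6$ is strictly positive, so after possibly further shrinking $h_0$, both quantities are strictly smaller than $C_0$ at $t=0$. Because $V_h$ is finite-dimensional, $\solh\in H^3(0,t_1;V_h)$ embeds into $C^2([0,t_1];V_h)$, so $\pt^2\errh$ and $\nabla\pt\errh$ depend continuously on $t$, and the strict inequalities persist on some $[0,t_2]\subset [0,t_1]$, giving $\finalth\geq t_2>0$.

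The only mildly delicate point is the interplay between the non-degeneracy condition (needed to invert $M(\pt\solh)$ when solving the ODE) and the continuity of the solution (needed to propagate the bounds beyond $t=0$), but both are resolved by standard ODE arguments once the explicit quantitative control at $t=0$ is available from Lemma~\ref{lemma:est_errh_zero}. No $\beta$-uniform energy estimate is required at this stage; those will be derived in the subsequent propositions in order to extend $\solh$ from $[0,\finalth]$ to all of $[0,T]$.
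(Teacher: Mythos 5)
Your proposal is correct and follows essentially the same route as the paper: a local Picard--Lindel\"of argument in the finite-dimensional space $V_h$ (using the non-degeneracy \eqref{gamma_nondegeneracy} at $t=0$ and inverse estimates for local Lipschitz continuity), combined with Lemma~\ref{lemma:est_errh_zero} and the observation that $k-1-d/6>0$ for $k\geq 2$, $d\leq 3$, so that the defining inequalities of $\finalth$ hold strictly at $t=0$ and persist by continuity. The only cosmetic difference is that the paper applies Picard--Lindel\"of directly to the time-differentiated problem written as a first-order system for $(\solh,\pt\solh,\pt^2\solh)$ on the open set $U_h$, whereas you solve the second-order system and bootstrap to $H^3$ by differentiating; both yield the required regularity.
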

	
	\begin{proof}
	For the purposes of stating the time-differentiated problem in a compact manner, we introduce the discrete multiplication operator  $\lambda_h = \lambda_h(\pt \solh) \colon \Vh \to \Vh$ defined by  
\begin{equation}
	\ip{\lambda_h \varphi_h}{\psi_h} = 	\ip{(1+\kappa \pt \solh) \varphi_h}{\psi_h}
\end{equation}
for $\varphi_h,\psi_h \in \Vh$, which is invertible at $t=0$ by \eqref{gamma_nondegeneracy}.
The time-differentiated semi-discrete problem can then be written as
\begin{equation} \label{rewritten_problem}
			\begin{aligned}
		\begin{multlined}[t]
	   				\pt^3 \solh 
				   =
				   \Bigl( \lambda_h^{-1} 
				   \bigl( c^2  \Delta_h \solh
				   + 
				   \beta \Delta_h \pt \solh
				   - 
				   \ell  \nabla \solh \cdot \pt \nabla \pt \solh
				   - 
				    f_h \bigr) \Bigr)_t
		\end{multlined}
	\end{aligned}
\end{equation}
and further rewritten as a first-order problem for $(\solh, \pt \solh, \pt^2 \solh)^T$. Unique solvability then follows by a similar reasoning to that of~\cite[Lemma 4.2]{hochbruck2022error} using a local version of the Picard--Lindel\"of theorem on the open set
\begin{equation}\label{def_U}
	\begin{aligned}
		U_h = \{ (\solh, \pt \solh, \pt^2 \solh) \in \left(C([0,t]; V_h)\right)^3:& \  |\kappa| \|\pt \solh(s)\|_{\Linf}  <1, \\
			&\, h^{-1-d/6} \norm{\pt^2 \errh(s)}_{\Ltwo} < C_0,  \\
		&\, h^{-1-d/6} \norm{\nabla \pt \errh(s)}_{\Ltwo}
		< C_0,\ s \in [0,t]  \}.
	\end{aligned}
\end{equation}
 We check first that $(\solh(0), \pt \solh(0), \pt^2 \solh(0)) \in U_h$. As concluded in the proof of Lemma~\ref{lemma:est_errh_zero}, for $h \leq h_0$ small enough, we have
\[
 |\kappa| \|\pt \solh(0)\|_{\Linf}  <1.
\]
By Lemma~\ref{lemma:est_errh_zero}, we also have
\begin{equation} \label{est_errh_zero}
 \norm{\pt^2 \errh(0)}_{\Ltwo} \lesssim h^k.
\end{equation}
Therefore, since $\pt \errh(0)=0$, for $h\leq h_0$ and $k \geq 2$, we conclude that
\begin{align}
	h^{-1-d/6} \max \{ \norm{\pt^2 \errh(0)}_{\Ltwo}   , \norm{\nabla \pt \errh(0)}_{\Ltwo}    \} =	h^{-1-d/6}  \norm{\pt^2 \errh(0)}_{\Ltwo}     < C_0
\end{align} 
and thus $(\solh(0), \pt \solh(0), \pt^2 \solh(0)) \in U_h$.
 \\
\indent Equation  \eqref{rewritten_problem} rewritten as a first-order system in time is driven by a locally Lipschitz continuous right-hand side. Indeed, Lipschitz continuity of the right-hand side follows analogously to the arguments of~\cite[Lemma 4.2]{hochbruck2022error} by the fact that $V_h$ is a finite-dimensional space and that we can use inverse estimates  
\eqref{eq:inv_estimate}
for functions in $V_h$.   \\
\indent Thus by the local version of the Picard--Lindel\"of theorem, a unique solution $\solh \in H^3(0,\finalth; V_h) \hookrightarrow C^2([0,T]; V_h)$ of 
\eqref{rewritten_problem} supplemented with initial data exists on $[0,\tilde{t}]$ for some $\tilde{t}>0$. Time integrating \eqref{rewritten_problem} and using \eqref{third_ic} shows that $\solh$ solves \eqref{eq:Kuznetsov_space_discr}, \eqref{eq:init_choice_space}.
 We therefore conclude that $\finalth>0$.
\end{proof}

We have shown that a unique approximate solution exists on $[0,\finalth]$. The next result establishes additional uniform bounds on this time interval.
	
	\begin{lemma} \label{lem:a-priori_bounds_on_solh} 
	Let the assumptions of Theorem~\ref{thm:kuznetsov_space_main} hold. On the interval $[0,\finalth]$, the following bound holds for sufficiently small $h$:
		\begin{equation} \label{est_solh_1}
			\begin{aligned}
			\norm{ \solh }_{L^\infty_t(\Woneinf)} 
			+
			\norm{ \nabla \pt \solh }_{L^\infty_t(\Linf)} 
			+
			\norm{ \pt^2 \solh }_{L^\infty_t(\Linf)}
			\lesssim 1.
			\end{aligned}
		\end{equation}
		In addition,
		\begin{equation} \label{gamma_bound}
			1 + \kappa \pt \solh \geq 
			\quasilowerbound > 0,  \quad (x,t) \in \Omega \times [0, \finalth],
		\end{equation}
		where  $\gamma>0$ does not depend on $h$, $\beta$, or $\finalth$.
	\end{lemma}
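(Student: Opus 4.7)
The plan is to decompose $\solh = \projRitz \sol - \errh$ and bound each contribution separately. The Ritz piece inherits its bounds from the smoothness $\sol \in \calU$ together with the classical stability estimates for $\projRitz$ on quasi-uniform meshes, while the error piece $\errh$ lives in $\Vh$ and is only $L^2$-controlled through the defining inequalities of $\finalth$ in \eqref{def:finaltimeh}; the inverse estimates from \eqref{eq:inv_estimate} combined with Sobolev embeddings then upgrade this to the required $L^\infty$-type information. The point to keep in mind is that the exponent $1+d/6$ in \eqref{def:finaltimeh} is tuned precisely so that combining it with the inverse bound $\norm{\phih}_{\Linf} \lesssim h^{-d/6}\norm{\phih}_{L^6}$ and $H_0^1 \hookrightarrow L^6$ produces $O(h)$-smallness of $\norm{\pt \errh}_{\Linf}$, which is the crucial ingredient for the non-degeneracy.

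For \eqref{est_solh_1} I would use the triangle inequality. The Ritz terms $\norm{\projRitz \sol}_{\Woneinf}$, $\norm{\nabla \pt \projRitz \sol}_{\Linf}$ and $\norm{\pt^2 \projRitz \sol}_{\Linf}$ are each $\lesssim 1$ by the $W^{1,\infty}$- respectively $L^\infty$-stability of $\projRitz$, combined with the time-Sobolev embeddings $H^3(0,T;\Woneinf) \hookrightarrow C^2([0,T];\Woneinf)$ and $W^{3,\infty}(0,T;H^2(\Omega)) \hookrightarrow L^\infty_t(\Linf)$ (valid in $d \le 3$ via $H^2 \hookrightarrow \Linf$) contained in $\calU$. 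The corresponding error contributions are controlled via the inverse estimate:
\begin{equation}
	\norm{\nabla \pt \errh}_{\Linf} \lesssim h^{-d/2}\norm{\nabla \pt \errh}_{\Ltwo} \le C_0\, h^{1-d/3},
\end{equation}
which is bounded for $d \le 3$, and $\norm{\pt^2 \errh}_{\Linf}$ is handled identically using the first bound in \eqref{def:finaltimeh}. For $\norm{\errh}_{L^\infty_t(\Woneinf)}$ I would first integrate $\pt \errh$ from $\errh(0)=0$ to obtain $\norm{\nabla \errh(t)}_{\Linf} \le \int_0^t \norm{\nabla \pt \errh(s)}_{\Linf}\ds \lesssim T h^{1-d/3}$.

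For \eqref{gamma_bound} I would write
\begin{equation}
	1 + \kappa \pt \solh = (1+\kappa \pt \sol) + \kappa(\pt \projRitz \sol - \pt \sol) - \kappa \pt \errh.
\end{equation}
The first summand is $\ge \tilde\gamma$ by \eqref{assumptions_exact_sol}, the second is $O(h)$ by the approximation property \eqref{eq:projRitz_approx} applied to $\pt \sol \in L^\infty(0,T;\Woneinf)$, and the third admits the refined bound
\begin{equation}
	\norm{\pt \errh}_{\Linf} \lesssim h^{-d/6}\norm{\pt \errh}_{L^6} \lesssim h^{-d/6}\norm{\nabla \pt \errh}_{\Ltwo} \le C_0\, h,
\end{equation}
using $\Vh \subset H_0^1(\Omega)$, the continuous Sobolev embedding, and the second bound in \eqref{def:finaltimeh}. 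Hence for $h_0$ sufficiently small (independently of $\beta$ and of $\finalth$), one concludes $1+\kappa \pt \solh \ge \tilde\gamma/2 =: \quasilowerbound$.

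The delicate point is the borderline balance between the $h^{-d/6}$ loss from the inverse estimate and the $h^{1+d/6}$ smallness granted by \eqref{def:finaltimeh}: any strictly weaker scaling in that definition would not yield the $O(h)$ bound on $\norm{\pt \errh}_{\Linf}$, and the non-degeneracy perturbation argument would then fail in three space dimensions. It is precisely this choice of exponent, together with the fact that $\pt^2 \errh$, $\pt \errh$ and $\nabla \pt \errh$ are (piecewise polynomial) finite element quantities so that inverse estimates apply verbatim, that will make the bootstrap close.
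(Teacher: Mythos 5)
Your proposal is correct and follows essentially the same route as the paper: decompose $\solh = \projRitz \sol - \errh$, bound the Ritz part via \eqref{eq:projRitz_approx} and the regularity in $\calU$, and upgrade the $L^2$-control of $\errh$ granted by the definition of $\finalth$ to $L^\infty$ via the inverse estimates \eqref{eq:inv_estimate}, with $1+d/6 \geq d/2$ for $d\le 3$ closing the bounds. Your explicit perturbation argument for \eqref{gamma_bound}, using $\norm{\pt \errh}_{\Linf} \lesssim h^{-d/6}\norm{\nabla \pt \errh}_{\Ltwo} \le C_0 h$, is in fact a more quantitative justification of the uniform lower bound $\gamma$ than the paper's one-line reference to membership in $U_h$.
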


	\begin{proof}
		Using the stability properties of the Ritz projection
		in \eqref{eq:projRitz_approx}
		and the definition of $\finalth$, we obtain the following bound:
		\begin{equation} \label{bound_nabla_solh}
			\begin{aligned}
				\norm{ \solh }_{\Woneinf} 
				&\lesssim 
				\norm{ \projRitz \sol }_{\Linf}+	\norm{\nabla \projRitz \sol }_{\Linf} +  \norm{ \errh }_{\Woneinf}\\
				&\lesssim 
				\norm{ \sol }_{\Woneinf} + h^{-d/2} \norm{\nabla \errh }_{\Ltwo} \leq C, 
			\end{aligned}
		\end{equation}
		since $d/2 \geq 1+ d/6$, as well as 
		\begin{equation}
			\norm{ \pt^2 \solh }_{\Linf} \leq
			\norm{ \pt^2 \sol }_{\Woneinf}
			+
			C h^{-d/2} \norm{ \pt^2 \errh}_{\Ltwo}
			\leq C
		\end{equation}
		on $[0, \finalth]$. Furthermore,
		\begin{align}
			\norm{ \nabla \pt \solh }_{\Linf} 
			&\lesssim 
			\norm{ \pt \projRitz \sol }_{\Woneinf}
			 + \norm{ \nabla \pt \errh }_{\Linf}
			\\
			&\lesssim 
			\norm{ \pt \sol }_{\Woneinf} 
			+ h^{-d/2} \norm{ \nabla \pt \errh }_{\Ltwo} \leq C
		\end{align}
for all $t \in [0, \finalth]$.	The bound  \eqref{gamma_bound} follows by the solvability of the (differentiated) semi-discrete problem in $U_h$; cf.\ \eqref{def_U}.
	\end{proof}

	Our main task in the remaining of this section is to prove that
	\begin{equation} \label{goal_est_finalth}
		\norm{\pt^2 \errh(\finalth)}_{\Ltwo}   + \norm{\nabla \pt \errh(\finalth)}_{\Ltwo} \lesssim h^k,
	\end{equation}
	where the error $\errh = \projRitz \sol -\solh$ satisfies
	\begin{equation} \label{eq_errh}
		\begin{aligned} 
			&\begin{multlined}[t]	\ip{(1+\kappa \pt \solh) \pt^2 \errh }{ \phih}+\kappa\ip{ \pt \errh \pt^2 \projRitz \sol }{ \phih}
				- 
				\ip{c^2 \Deltah \errh }{\phih} 	\\- 
				\ip{\beta \Deltah \pt \errh }{\phih} \end{multlined}\\
			&= 
			-
			\ell \ip{\nabla \solh \cdot \nabla \pt \errh  + \nabla \errh \cdot \nabla \pt \projRitz \sol  }{\phih}
			+
			\ip{\delta_h}{\phih}
		\end{aligned}
	\end{equation}
			and the defect is given by
	\begin{equation} \label{def_defect}
		\begin{aligned}
			\ip{\delta_h}{\phih} &= \begin{multlined}[t]
				\ip{( 1 + \kappa \pt {\solh})\pt^2 \projRitz \sol -( 1 + \kappa \pt  \sol)\pt^2  \sol }{\phih}
				\\
				+ \ell \ip{\nabla  {\solh} \cdot \nabla \pt \projRitz \sol  -\ell \nabla  \sol \cdot \nabla \pt  \sol}{\phih}
				+\ip{f-f_h}{\phih}
			\end{multlined}
		\end{aligned}
	\end{equation}
	for all $\phih \in V_h$. The bound \eqref{goal_est_finalth} will allow us to extend the existence interval beyond $[0, \finalth]$. \\

To prove \eqref{goal_est_finalth}, we use a two-step testing procedure. In the first step, we test the time-differentiated error equation with $\pt^2 \errh$. 

	\begin{myproposition} \label{prop:fe_FirstEstimate}

			Let the assumptions of Theorem~\ref{thm:kuznetsov_space_main} hold.  For $t\in[0,\finalth]$, it holds
		\begin{equation} \label{est_eh_Step1}
			\begin{aligned}
				&	\norm{\pt^2 \errh(t) }^2_{\Ltwo}
				+   \norm{\nabla \pt \errh(t)}^2_{\Ltwo} + \beta \intt \norm{\nabla \pt^2 \errh}^2_{\Ltwo} \\
				&\lesssim \begin{multlined}[t]			
					\norm{\pt^2 \errh(0) }^2_{\Ltwo}+
					\int\limits_0^t \big(
					\norm{\nabla \pt \errh(s)  }^2_{\Ltwo}
					+
					\norm{\pt^2 \errh(s)}^2_{\Ltwo}
					+
					\norm{\pt \delta_h (s)}^2_{\Ltwo} \big)\ds\\+	\alpha 
					\int_0^t
					\norm{\Deltah \errh(s)}_{\Ltwo}^2 \dint{s}. \end{multlined}
			\end{aligned}	
		\end{equation}
		for all $t \in [0, \finalth]$ and $\alpha>0$, with the hidden constant independent of $h$, $\finalth$, and $\beta$.
	\end{myproposition}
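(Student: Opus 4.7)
The strategy is to differentiate the error equation \eqref{eq_errh} in time (producing an equation for $\pt^3 \errh$ with the problematic term $-\ell \ip{\nabla \solh \cdot \nabla \pt^2 \errh}{\phih}$ appearing on the right-hand side) and then test it with $\phih = \pt^2 \errh$. The leading term then yields
\[
\ip{(1+\kappa \pt \solh) \pt^3 \errh}{\pt^2 \errh} = \tfrac{1}{2} \dt \ip{(1+\kappa \pt \solh) \pt^2 \errh}{\pt^2 \errh} - \tfrac{\kappa}{2} \ip{\pt^2 \solh (\pt^2 \errh)^2}{1},
\]
the $c^2 \Deltah \pt \errh$ term contributes $\tfrac{c^2}{2} \dt \norm{\nabla \pt \errh}^2$, and the $\beta \Deltah \pt^2 \errh$ term gives $\beta \norm{\nabla \pt^2 \errh}^2$. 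Using the non-degeneracy $1 + \kappa \pt \solh \geq \quasilowerbound$ from Lemma~\ref{lem:a-priori_bounds_on_solh} and integrating in time recovers the three LHS quantities in \eqref{est_eh_Step1} plus $\norm{\pt^2 \errh(0)}^2$.

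All remaining RHS contributions produced by differentiating the lower-order couplings—$\kappa \pt^2 \solh \, \pt^2 \errh$, $\kappa \pt^2 \errh \, \pt^2 \projRitz \sol$, $\kappa \pt \errh \, \pt^3 \projRitz \sol$, $\ell \nabla \pt \solh \cdot \nabla \pt \errh$, $\ell \nabla \pt \errh \cdot \nabla \pt \projRitz \sol$, and $\ell \nabla \errh \cdot \nabla \pt^2 \projRitz \sol$—are handled routinely by Cauchy--Schwarz and Young's inequality, exploiting the $L^\infty$-bounds on $\sol$ through stability of $\projRitz$, the uniform bounds in \eqref{est_solh_1}, Poincar\'e's inequality $\norm{\pt \errh} \lesssim \norm{\nabla \pt \errh}$, and the fundamental theorem of calculus $\nabla \errh(t) = \int_0^t \nabla \pt \errh \ds$ to dispose of $\norm{\nabla \errh}$. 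The defect yields $\int_0^t \norm{\pt \delta_h}^2 \ds$.

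The principal difficulty is the quadratic gradient nonlinearity $\ell \ip{\nabla \solh \cdot \nabla \pt^2 \errh}{\pt^2 \errh}$, which must be bounded uniformly in $\beta$—so one cannot simply absorb $\norm{\nabla \pt^2 \errh}$. The plan is to exploit the identity $\pt^2 \errh \, \nabla \pt^2 \errh = \tfrac{1}{2}\nabla (\pt^2 \errh)^2$ after writing $\solh = \projRitz \sol - \errh$. For the \emph{smooth contribution}, split $\nabla \projRitz \sol = \nabla \sol + \nabla(\projRitz \sol - \sol)$: integration by parts against $\sol \in \Htwo \cap H^1_0(\Omega)$ and $(\pt^2 \errh)^2 \in H^1_0(\Omega)$ gives $-\tfrac{\ell}{2} \ip{\Delta \sol}{(\pt^2 \errh)^2}$, bounded by $\norm{\Delta \sol}_{\Linf} \norm{\pt^2 \errh}^2$; the Ritz correction is estimated as $h^k \cdot h^{-1} \norm{\pt^2 \errh}^2 \lesssim \norm{\pt^2 \errh}^2$ using \eqref{eq:projRitz_approx}, the inverse estimate \eqref{eq:inv_estimate}, and $k \geq 2$. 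For the \emph{discrete contribution}, Ritz orthogonality applied to $\errh \in \Vh$ against $(\pt^2 \errh)^2 \in H^1_0(\Omega)$ yields
\[
\tfrac{\ell}{2} \ip{\nabla \errh}{\nabla (\pt^2 \errh)^2} = \tfrac{\ell}{2}\ip{\nabla \errh}{\nabla \projRitz[(\pt^2 \errh)^2]} = -\tfrac{\ell}{2} \ip{\Deltah \errh}{\projRitz[(\pt^2 \errh)^2]},
\]
which is controlled by $\norm{\Deltah \errh} \norm{\pt^2 \errh}_{L^4}^2$ via $L^2$-stability of $\projRitz$. The estimate $\norm{\pt^2 \errh}_{L^4}^2 \leq \norm{\pt^2 \errh}_{\Linf} \norm{\pt^2 \errh}$ together with the inverse bound $\norm{\pt^2 \errh}_{\Linf} \lesssim h^{-d/2}\norm{\pt^2 \errh}$ and the smallness condition in \eqref{def:finaltimeh} gives $\norm{\pt^2 \errh}_{\Linf} \lesssim C_0 h^{1-d/3} \lesssim 1$ for $d \leq 3$; Young's inequality then produces $\alpha \norm{\Deltah \errh}^2 + C_\alpha \norm{\pt^2 \errh}^2$, matching the two right-hand side terms on the right of \eqref{est_eh_Step1}.

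The main obstacle, as indicated, is precisely this nonlinear term: only by combining the identity $\pt^2 \errh \, \nabla \pt^2 \errh = \tfrac{1}{2}\nabla (\pt^2 \errh)^2$ with Ritz orthogonality (transferring the derivative from $(\pt^2 \errh)^2$ onto $\errh$) and then compensating the inverse estimates by the smallness encoded in the definition of $\finalth$ can one avoid reintroducing $\norm{\nabla \pt^2 \errh}$ on the right-hand side—a quantity which is uncontrollable in the absence of the damping $\beta$. Summing all contributions and integrating from $0$ to $t$ finishes the proof.
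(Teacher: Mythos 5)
Your proposal follows the same architecture as the paper's proof: test the time-differentiated error equation with $\pt^2\errh$, extract the energy from the leading, $c^2$- and $\beta$-terms, dispose of the lower-order couplings with the bounds of Lemma~\ref{lem:a-priori_bounds_on_solh}, and isolate the cubic term $\ell\ip{\nabla\solh\cdot\nabla\pt^2\errh}{\pt^2\errh}$ via the identity $\pt^2\errh\,\nabla\pt^2\errh=\tfrac12\nabla(\pt^2\errh)^2$ and the splitting $\nabla\solh=\nabla\sol+\nabla(\projRitz\sol-\sol)-\nabla\errh$. The smooth and Ritz-correction pieces are treated as in the paper (note only that for the correction you need just the first-order bound $h^{-1}\norm{\sol-\projRitz\sol}_{\Woneinf}\lesssim\norm{\sol}_{W^{2,\infty}(\Om)}$; the full $h^k$ rate would require $W^{k+1,\infty}$ regularity not contained in $\calU$). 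Where you genuinely diverge is the piece $-\ip{\nabla\errh\cdot\nabla\pt^2\errh}{\pt^2\errh}$: the paper bounds it by $\norm{\errh}_{\Woneinf}\norm{\nabla\pt^2\errh}_{\Ltwo}\norm{\pt^2\errh}_{\Ltwo}$ and then trades both factors through the inverse estimates and the discrete Sobolev embedding $\norm{\errh}_{W^{1,6}(\Om)}\lesssim\norm{\Deltah\errh}_{\Ltwo}$, while you use Galerkin orthogonality of $\errh\in\Vh$ against $(\pt^2\errh)^2\in H_0^1(\Om)$ to move the derivative onto $\errh$ and obtain $\ip{\Deltah\errh}{\projRitz[(\pt^2\errh)^2]}$ directly. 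Both routes land on $\alpha\norm{\Deltah\errh}^2_{\Ltwo}+C_\alpha\norm{\pt^2\errh}^2_{\Ltwo}$ after invoking the smallness encoded in \eqref{def:finaltimeh}; yours avoids the $W^{1,6}$ detour at the price of handling a projected nonlinear quantity.

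One step needs repair: the claimed ``$L^2$-stability of $\projRitz$''. The Ritz projection is not bounded on $\Ltwo$; the duality (Aubin--Nitsche) argument only gives $\norm{\projRitz v}_{\Ltwo}\lesssim\norm{v}_{\Ltwo}+h\norm{\nabla v}_{\Ltwo}$ (and requires elliptic regularity of $\Om$, which is implicitly assumed anyway for \eqref{eq:discrete_Sobolev_embedding}). Applied to $v=(\pt^2\errh)^2$ this reintroduces $h\norm{\nabla(\pt^2\errh)^2}_{\Ltwo}\leq 2h\norm{\pt^2\errh}_{\Linf}\norm{\nabla\pt^2\errh}_{\Ltwo}$, i.e.\ exactly the quantity you set out to avoid. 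The term is nevertheless harmless: by the inverse estimates, $h\norm{\pt^2\errh}_{\Linf}\norm{\nabla\pt^2\errh}_{\Ltwo}\lesssim h^{-d/2}\norm{\pt^2\errh}_{\Ltwo}^2$, and since \eqref{def:finaltimeh} gives $h^{-d/2}\norm{\pt^2\errh}_{\Ltwo}\leq C_0h^{1-d/3}\leq C_0$ for $d\leq3$, the resulting contribution $\norm{\Deltah\errh}_{\Ltwo}\cdot h^{-d/2}\norm{\pt^2\errh}_{\Ltwo}^2$ is again absorbed by Young's inequality into $\alpha\norm{\Deltah\errh}^2_{\Ltwo}+C_\alpha\norm{\pt^2\errh}^2_{\Ltwo}$. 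With this correction spelled out, your argument closes.
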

	\begin{proof}
	As announced,	we test the time-differentiated error equation with
		$\phih=\pt^2 \errh(t)$:
		\begin{align} 
			&\begin{multlined}[t]	\ip{(1+\kappa \pt \solh)\pt^3 \errh }{ \pt^2 \errh}+	\kappa\ip{\pt^2 \solh \pt^2 \errh }{ \pt^2 \errh}
				+\kappa\ip{ \pt^2 \errh \pt^2 \projRitz \sol }{ \pt^2 \errh}\\+\kappa\ip{ \pt \errh \pt^3 \projRitz \sol }{ \pt^2 \errh} - 
				\ip{c^2 \Deltah  \pt \errh }{\pt^2 \errh} - 
				\ip{\beta \Deltah  \pt^2 \errh }{\pt^2 \errh}
			\end{multlined} \\
			=&\, \begin{multlined}[t]
				-
				\ell \ip{\nabla \pt \solh \cdot \nabla \pt \errh  + \nabla \pt \errh \cdot \nabla \pt \projRitz \sol  }{\pt^2 \errh}
				\\ -
				\ell \ip{\nabla \solh \cdot \nabla \pt^2 \errh  
					+ \nabla \errh \cdot \nabla \pt^2 \projRitz \sol  }{\pt^2 \errh}
								+
				\ip{\pt \delta_h}{\pt^2 \errh}
			\end{multlined} 
		\end{align}
		for all $t \in [0, \finalth]$. Using integration by parts in time and
		 Young's inequality yields the following estimate:
		\begin{equation}\label{ineq:error_test_pt2_errh}
			\begin{aligned}
				&	\pt \norm{(1+\kappa \pt \solh)^{1/2} \pt^2 \errh }^2_{\Ltwo}
				+ c^2  \pt  \norm{\nabla \pt \errh}^2_{\Ltwo}+ \beta    \norm{\nabla \pt^2 \errh}^2_{\Ltwo} \\
				&\lesssim \begin{multlined}[t]
					\|\pt^2 \solh \pt^2 \errh \|^2_{\Ltwo}+	\| \pt^2 \errh \pt^2 \projRitz \sol \|^2_{\Ltwo}+\| \pt \errh \pt^3 \projRitz \sol \|^2_{\Ltwo}
						+
					\norm{\pt \errh }^2_{\Ltwo} 
					\\			
					+	\ell^2 \| \nabla \pt \solh \cdot \nabla \pt \errh \|^2_{\Ltwo} +\ell^2\| \nabla \pt \errh \cdot \nabla \pt \projRitz \sol \|_{\Ltwo}^2
					\\
					+
					\ell  \ip{\nabla \solh \cdot \nabla \pt^2 \errh  }{\pt^2 \errh}
					+	\ell^2 \|  \nabla \errh \cdot \nabla \pt^2 \projRitz \sol \|^2_{\Ltwo}
					+
					\norm{\pt \delta_h }^2_{\Ltwo} 
				\end{multlined}
			\end{aligned}
		\end{equation}
		for $0\leq t \leq\finalth$. We can rely on the bounds on $\solh$ obtained in Lemma~\ref{lem:a-priori_bounds_on_solh} to further estimate the right-hand side terms. First, using also the embedding $\Hone \hookrightarrow L^p(\Om)$, $p \in [1,6]$, we estimate the first three terms on the right-hand side of \eqref{ineq:error_test_pt2_errh} as follows:
		\begin{equation}
			\begin{aligned}
				&\|\pt^2 \solh \pt^2 \errh \|^2_{\Ltwo}+	\| \pt^2 \errh \pt^2 \projRitz \sol \|^2_{\Ltwo}+\| \pt \errh \pt^3 \projRitz \sol \|^2_{\Ltwo} \\
				&\lesssim \|\pt^2 \solh \|^2_{\Linf}\|\pt^2 \errh \|^2_{\Ltwo}+	\| \pt^2 \errh\|_{\Ltwo}^2 \|\pt^2 \projRitz \sol \|^2_{\Linf}+\| \pt \errh \|^2_{\Lsix}\| \pt^3 \projRitz \sol \|^2_{\Lthree}\\
				&\lesssim \|\pt^2 \errh \|^2_{\Ltwo}
					+\| \pt \errh \|^2_{\Hone},
			\end{aligned}
		\end{equation}
		where we have employed $\|\pt^2 \solh(t)\|_{\Linf} \lesssim 1$ on $[0, \finalth]$. Next, we can bound the $\ell^2$ terms in the following manner using \eqref{est_solh_1}:
		\begin{equation} \label{est_grad_1}
			\begin{aligned}
			\ell^2	\|\nabla \pt \solh \cdot \nabla \pt \errh   \|^2_{\Ltwo}
				&\lesssim 
				\norm{\nabla \pt \solh }^2_{\Linf}
				\norm{\nabla \pt \errh  }^2_{\Ltwo}
				\lesssim 
				\norm{\nabla \pt \errh  }^2_{\Ltwo}.
			\end{aligned}
		\end{equation}
		Similarly, using \eqref{eq:projRitz_approx},
		\begin{equation}\label{est_grad_2}
			\begin{aligned}
			\ell^2	\|\nabla \pt \errh \cdot \nabla \pt \projRitz \sol \|^2_{\Ltwo}
				\lesssim 
				\norm{\nabla \pt \errh  }_{\Ltwo}^2
				\norm{\nabla \pt  \projRitz \sol }_{\Linf}^2
				\lesssim 
			\norm{\pt u}^2_{\Woneinf}	\norm{\nabla \pt \errh  }^2_{\Ltwo},
			\end{aligned}
		\end{equation}
		and
		\begin{equation}\label{est_grad_3}
			\begin{aligned}
			\ell^2	\|\nabla \errh \cdot \nabla \pt^2 \projRitz \sol  \|^2_{\Ltwo}
				\lesssim 
				\norm{\nabla \errh  }^2_{\Ltwo}
				\norm{\nabla \pt^2  \projRitz \sol }^2_{\Linf}
				\lesssim 
				\norm{\pt^2 u}^2_{\Woneinf}	\norm{\nabla  \errh  }^2_{\Ltwo}.
			\end{aligned}
		\end{equation}
		The most salient point of the proof lies in estimating the term $
		\ell \ip{\nabla \solh \cdot \nabla \pt^2 \errh  }{\pt^2 \errh} $ in \eqref{ineq:error_test_pt2_errh}.  To bound this term, we
		split the scalar product into three components by involving the exact solution:
		\begin{equation} \label{est_nabla_ptsq_errh}
			\begin{aligned}
				&\ip{\nabla \solh \cdot \nabla \pt^2 \errh   }{\pt^2 \errh}\\
				&=\begin{multlined}[t]
				\ip{\nabla \sol \cdot \nabla \pt^2 \errh   }{\pt^2 \errh}
				-
				\ip{\nabla ( \sol- \projRitz \sol) \cdot \nabla \pt^2 \errh   }{\pt^2 \errh}
 			\\	-
				\ip{\nabla \errh \cdot \nabla \pt^2 \errh   }{\pt^2 \errh}. \end{multlined}
			\end{aligned}
		\end{equation}	
		We can then	use the fact that \[ \pt^2 \errh  \nabla \pt^2 \errh = \frac12 \nabla (\pt^2 \errh)^2 \]
		and integration by parts to rewrite the first term on the right-hand side of \eqref{est_nabla_ptsq_errh} as
		\begin{align}
			\ip{\nabla \sol \cdot \nabla \pt^2 \errh   }{\pt^2 \errh}
			=
			- \frac12 \ip{\Delta \sol \, \pt^2 \errh   }{\pt^2 \errh}.
		\end{align}
		Employing H\"older's and Young's inequalities in \eqref{est_nabla_ptsq_errh} then yields	
		\begin{equation} \label{eq:nonlinear_term_v1}
			\begin{aligned}
				&\ip{\nabla \solh \cdot \nabla \pt^2 \errh   }{\pt^2 \errh} \\
				&= \begin{multlined}[t]
				- \frac12 \ip{\Delta \sol \, \pt^2 \errh   }{\pt^2 \errh}
				-
				\ip{\nabla ( \sol- \projRitz \sol) \cdot \nabla \pt^2 \errh   }{\pt^2 \errh}
			\\	-
				\ip{\nabla \errh \cdot \nabla \pt^2 \errh   }{\pt^2 \errh} \end{multlined}
				\\
				&\lesssim \begin{multlined}[t]
					\norm{\Delta \sol}_{\Linf}   \norm{\pt^2 \errh}_{\Ltwo}^2
					+
					\bigl( \norm{ \sol- \projRitz \sol}_{\Woneinf}
					\bigr)
					\norm{\nabla \pt^2 \errh}_{\Ltwo}
					\norm{\pt^2 \errh}_{\Ltwo}\\-
					\ip{\nabla \errh \cdot \nabla \pt^2 \errh   }{\pt^2 \errh} \end{multlined}
				\\
				&\lesssim
				\bigl( 
				\norm{\Delta \sol}_{\Linf} 
				+
				h^{-1} \norm{ \sol- \projRitz \sol}_{\Woneinf}
				\bigr)
				\norm{\pt^2 \errh}_{\Ltwo}^2-
				\ip{\nabla \errh \cdot \nabla \pt^2 \errh   }{\pt^2 \errh},
			\end{aligned}
		\end{equation}
		where we have also used the inverse estimate \eqref{eq:inv_estimate} on 
		$\norm{\nabla \pt^2 \errh}_{\Ltwo}$
		 in the last line. 
	After integrating in time, we can estimate the last term on the right-hand side of \eqref{eq:nonlinear_term_v1} as follows:
		\begin{align}
			&\int_0^t
			\norm{\errh(s)}_{\Woneinf}
			\norm{\nabla \pt^2 \errh(s)}_{\Ltwo}
			\norm{\pt^2 \errh(s)}_{\Ltwo}\ds\\
			&\lesssim
			h^{-1-d/6}
			\int_0^t
			\norm{\errh(s)}_{W^{1,6}(\Om)}
			\norm{\pt^2 \errh(s)}_{\Ltwo}
			\norm{\pt^2 \errh(s)}_{\Ltwo}
			\ds
			\\
			&\lesssim
			\bigl(
			\max_{s\in [0,\finalth]}
			h^{-1-d/6}
			\norm{\pt^2 \errh(s)}_{\Ltwo} \bigr)
			\int_0^t
			\norm{\Deltah \errh(s)}_{\Ltwo}
			\norm{\pt^2 \errh(s)}_{\Ltwo}
			\ds
			\\
			&\leq
			\alpha 
			\int_0^t
			\norm{\Deltah \errh(s)}_{\Ltwo}^2
			\ds
			+
			C_\alpha
			\int_0^t
			\norm{\pt^2 \errh(s)}_{\Ltwo}^2
			\ds
		\end{align}
		for any $\alpha>0$, where we used the definition of $\finalth$. 
	From \eqref{eq:nonlinear_term_v1},	relying also on the estimate
		\begin{align}
			h^{-1} \norm{\sol(t)- \projRitz \sol(t)}_{\Woneinf} &\lesssim h^{-1}h \norm{\sol(t)}_{W^{2,\infty}(\Om)} 
			\lesssim \norm{\sol(t)}_{W^{2,\infty}(\Om)} , \quad t \in [0,T],
		\end{align}
		(which holds by \eqref{eq:projRitz_approx}),
		we then have
			\begin{equation} 
			\begin{aligned}
				&\ell \intt \ip{\nabla \solh \cdot \nabla \pt^2 \errh   }{\pt^2 \errh} \ds \\
				&\lesssim \begin{multlined}[t]
		\intt	\norm{ \sol(s)}_{W^{2,\infty}(\Om)} 	\norm{\pt^2 \errh(s)}_{\Ltwo}^2 \ds+
\alpha				\intt
				\norm{\Deltah \errh(s)}_{\Ltwo}^2
				\ds
					\\	+
				C_\alpha
				\int_0^t
				\norm{\pt^2 \errh(s)}_{\Ltwo}^2
				\ds 
				\end{multlined}
			\end{aligned}
		\end{equation}
	for any $\alpha>0$. Integrating over $(0,t)$ in \eqref{ineq:error_test_pt2_errh} and using this estimate together with \eqref{est_grad_1}--\eqref{est_grad_3} yields \eqref{est_eh_Step1}.
	\end{proof}

	Note that we cannot yet control the $\Delta_h \errh$ term on the right-hand side of \eqref{est_eh_Step1}. Therefore, in the second step, we additionally test the error equation  \eqref{eq_errh} with $-\Delta_h \errh$.

	\begin{myproposition} \label{prop:kuznetsov_space_main_v2}
	Let the assumptions of Theorem~\ref{thm:kuznetsov_space_main} hold. For $t\in[0,\finalth]$, it holds
		\begin{equation}\label{final_uniformbound_errh}
			\begin{aligned}
				&\norm{\pt^2 \errh(t) }^2_{\Ltwo}
				+   \norm{\nabla \pt \errh(t)}^2_{\Ltwo}  
				+
				\beta 
				\norm{ \Deltah \errh(t)}_{\Ltwo}^2
				+
				\frac{c^2}{4}
				\int_0^t \norm{ \Deltah \errh(s)}_{\Ltwo}^2 \ds
				\\
				&\leq \begin{multlined}[t]
					C
					\norm{\pt^2 \errh(0) }^2_{\Ltwo}
					+
					C
					\int\limits_0^t \big(
					\norm{\nabla \pt \errh(s)  }^2_{\Ltwo}
					+
					\norm{\pt^2 \errh(s)}^2_{\Ltwo}
					+
					\norm{\pt \delta_h (s)}^2_{\Ltwo}
					\\+
					\norm{\delta_h(s)}_{\Ltwo}^2
					+
					\norm{\nabla \pt \errh(s)}_{\Ltwo}^2
					+
					\norm{\nabla \errh(s)}_{\Ltwo}^2 \big)\ds
				\end{multlined}
			\end{aligned}
		\end{equation}
		for all $t \in [0, \finalth]$, with a constant $C>0$ independent of $h$, $\finalth$, and $\beta$.
	\end{myproposition}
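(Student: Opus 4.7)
The plan is to complement Proposition~\ref{prop:fe_FirstEstimate} by testing the error equation \eqref{eq_errh} with $\phih = -\Deltah \errh(t)$. This choice is natural because it produces $c^2 \norm{\Deltah \errh(t)}^2_{\Ltwo}$ from the elliptic term and $\tfrac{\beta}{2}\pt \norm{\Deltah \errh(t)}^2_{\Ltwo}$ from the strong damping term, both with positive sign on the left-hand side. After integration over $(0,t)$ and using $\errh(0) = 0$ from the initial data choice \eqref{eq:init_choice_space}, these yield the two new good terms $\tfrac{c^2}{2} \int_0^t \norm{\Deltah \errh(s)}^2_{\Ltwo} \dint{s}$ and $\tfrac{\beta}{2} \norm{\Deltah \errh(t)}^2_{\Ltwo}$ appearing in \eqref{final_uniformbound_errh}. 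The final bound is then obtained by adding this estimate to \eqref{est_eh_Step1} with the parameter $\alpha$ there chosen small enough.

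For the right-hand side contributions coming from the second testing, I plan to use a uniform Young splitting of the type $\text{(term)} \lesssim \alpha \norm{\Deltah \errh}^2_{\Ltwo} + C_\alpha (\cdots)$, where the $\alpha$-part is eventually absorbed into the $c^2$-coefficient on the left. Concretely, the inertial term $\ip{(1+\kappa \pt \solh)\pt^2 \errh}{-\Deltah \errh}$ is handled via Cauchy--Schwarz together with the uniform $L^\infty$-bound on $\pt \solh$ from Lemma~\ref{lem:a-priori_bounds_on_solh}, producing $\alpha \norm{\Deltah \errh}^2_{\Ltwo} + C_\alpha \norm{\pt^2 \errh}^2_{\Ltwo}$. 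The quasilinear remainder $\kappa \ip{\pt \errh \, \pt^2 \projRitz \sol}{-\Deltah \errh}$ is treated similarly using $\norm{\pt^2 \projRitz \sol}_{\Linf} \lesssim \norm{\pt^2 \sol}_{W^{1,\infty}(\Omega)} \lesssim 1$, and the defect term contributes $\alpha \norm{\Deltah \errh}^2_{\Ltwo} + C_\alpha \norm{\delta_h}^2_{\Ltwo}$.

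The main difficulty (though milder than in Proposition~\ref{prop:fe_FirstEstimate}) lies in the gradient nonlinearities $\ell \ip{\nabla \solh \cdot \nabla \pt \errh}{-\Deltah \errh}$ and $\ell \ip{\nabla \errh \cdot \nabla \pt \projRitz \sol}{-\Deltah \errh}$. Here, in contrast to the proof of Proposition~\ref{prop:fe_FirstEstimate}, we do \emph{not} need the polynomial-structure trick: since $\nabla \solh$ and $\nabla \pt \projRitz \sol$ are both uniformly bounded in $\Linf$ by \eqref{est_solh_1} and \eqref{eq:projRitz_approx}, Hölder's inequality immediately gives bounds of the form $\alpha \norm{\Deltah \errh}^2_{\Ltwo} + C_\alpha \norm{\nabla \pt \errh}^2_{\Ltwo}$ and $\alpha \norm{\Deltah \errh}^2_{\Ltwo} + C_\alpha \norm{\nabla \errh}^2_{\Ltwo}$. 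This is precisely why the extra terms $\norm{\nabla \pt \errh}^2_{\Ltwo}$ and $\norm{\nabla \errh}^2_{\Ltwo}$ appear inside the time integral on the right-hand side of \eqref{final_uniformbound_errh}.

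To conclude, I would sum the integrated version of the second testing with the estimate \eqref{est_eh_Step1} from Proposition~\ref{prop:fe_FirstEstimate}, choosing $\alpha$ in both places sufficiently small so that all $\alpha \int_0^t \norm{\Deltah \errh(s)}^2_{\Ltwo} \dint{s}$ contributions on the right can be absorbed into the $c^2 \int_0^t \norm{\Deltah \errh(s)}^2_{\Ltwo} \dint{s}$ gained on the left, leaving a remaining coefficient of at least $c^2/4$. Using again $\errh(0) = \pt \errh(0) = 0$ to eliminate the boundary terms at $t=0$ other than $\norm{\pt^2 \errh(0)}^2_{\Ltwo}$, and $\beta \leq \bar{\beta}$ to keep $h$- and $\beta$-independent hidden constants, yields exactly the estimate \eqref{final_uniformbound_errh}.
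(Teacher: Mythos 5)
Your proposal is correct and follows essentially the same route as the paper: test \eqref{eq_errh} with $-\Deltah\errh$, integrate in time using $\errh(0)=0$, bound the right-hand side via the uniform bounds of Lemma~\ref{lem:a-priori_bounds_on_solh} and the Ritz-projection stability, absorb the $\Deltah\errh$ contributions into the $c^2$ term, and add the result to \eqref{est_eh_Step1} with $\alpha$ small. The only cosmetic difference is that you apply Young's inequality term by term while the paper groups all right-hand side factors into a single $L^2_t(\Ltwo)$ norm before splitting; the resulting estimate is the same.
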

	
	\begin{proof}
		Testing the error equation \eqref{eq_errh} with $\phih = - \Delta_h \errh$, integrating over $(0,t)$, and using $\errh(0)=0$ yields
		\begin{equation} \label{eq_Deltah_errh}
			\begin{aligned}
			&c^2 \int_0^t \norm{ \Deltah \errh(s)}_{\Ltwo}^2 \ds
			+
			\beta 
			 \norm{ \Deltah \errh(t)}_{\Ltwo}^2\\
			&=\begin{multlined}[t]
			\int_0^t	\ip{(1+\kappa \pt \solh)\pt^2 \errh 
				+\kappa \pt \errh \pt^2 \projRitz \sol 
				+
				\ell \nabla \solh \cdot \nabla \pt \errh  + \nabla \errh \cdot \nabla \pt \projRitz \sol  
				-
		\delta_h}{\Deltah \errh} \ds. 
			\end{multlined}
			\end{aligned}
		\end{equation}
		We use Young's inequality to bound the right-hand side:
	\begin{equation} \label{ineq_Deltah_errh}
	\begin{aligned}
		&\begin{multlined}[t]
	\int_0^t	\ip{(1+\kappa \pt \solh)\pt^2 \errh 
				+\kappa \pt \errh \pt^2 \projRitz \sol 
				+
				\ell \nabla \solh \cdot \nabla \pt \errh  + \nabla \errh \cdot \nabla \pt \projRitz \sol  
				-
				\delta_h}{\Deltah \errh} \ds
		\end{multlined} \\
		&\leq \,\begin{multlined}[t] \frac{1}{2 c^2}\norm{	(1+\kappa \pt \solh)\pt^2 \errh 
			+\kappa  \pt \errh \pt^2 \projRitz \sol 
			+
			\ell \nabla \solh \cdot \nabla \pt \errh  + \nabla \errh \cdot \nabla \pt \projRitz \sol  
			-\delta_h}_{L^2_t(\Ltwo)}^2
		\\
		+
		\frac{c^2}{2} \norm{\Deltah \errh}^2_{L^2_t(\Ltwo)}
				\end{multlined}
	\end{aligned}
\end{equation}
 We can conclude similarly to before by using \eqref{est_grad_1}--\eqref{est_grad_3} that
 \begin{equation}
 	\begin{aligned}
 		&\norm{	(1+\kappa \pt \solh)\pt^2 \errh 
 			+\kappa  \pt \errh \pt^2 \projRitz \sol 
 			+
 			\ell \nabla \solh \cdot \nabla \pt \errh  + \nabla \errh \cdot \nabla \pt \projRitz \sol  
 			-\delta_h}_{L^2_t(\Ltwo)} \\
 		\lesssim&\, \norm{\pt^2 \errh}_{L^2_t(\Ltwo)}+ \norm{\nabla \pt \errh}_{L^2_t(\Ltwo)}+ \norm{\nabla \errh}_{L^2_t(\Ltwo)}+\norm{\delta_{h}}_{L^2_t(\Ltwo)}
 	\end{aligned}
 \end{equation}
 for $t \in [0, \finalth]$. Using absorption via the $c^2$ term in \eqref{eq_Deltah_errh}, we arrive at 
		\begin{equation}  \label{eq:error2_test_Deltah_errh}
			\begin{aligned}
				&
				\frac{c^2}{2}
				\int_0^t \norm{ \Deltah \errh(s)}_{\Ltwo}^2 \ds+\beta 
				\norm{ \Deltah \errh(t)}_{\Ltwo}^2 
				\\
				&\lesssim \begin{multlined}[t]
				\int_0^t \Bigl(
					\norm{\pt^2 \errh(s)}_{\Ltwo}^2
					+
					\norm{\delta_h(s)}_{\Ltwo}^2 
					+
					\norm{\nabla \pt \errh(s)}_{\Ltwo}^2
					+
					\norm{\nabla \errh(s)}_{\Ltwo}^2 \Bigr)
					\ds.
				\end{multlined}
			\end{aligned}
		\end{equation}
		Then adding estimates \eqref{est_eh_Step1} and \eqref{eq:error2_test_Deltah_errh} and choosing $\alpha>0$ small enough (independently of $h$, $\finalth$, and $\beta$) so that the corresponding term can be absorbed by the left-hand side
		leads to \eqref{final_uniformbound_errh}.
	\end{proof}

	To show \eqref{goal_est_finalth}, it remains to estimate the defect terms on the right-hand side of \eqref{final_uniformbound_errh}.	
		
	\begin{lemma} \label{Lemma:Bound_defect} 
		Let the assumptions of Theorem~\ref{thm:kuznetsov_space_main} hold. On $[0,\finalth]$, the defect satisfies the following bounds:
		\begin{equation} \label{estimates_deltah}
			\begin{aligned}
				\norm{\delta_h}_{L^2_t(\Ltwo)}  \leq&\, C(u) 
				\bigl(h^k+ \norm{\pt \errh}_{L^2_t(\Ltwo)} 
				+ 
				 \norm{\nabla \errh}_{L^2_t(\Ltwo)}
				  \bigr), 
				  \\
				\norm{\pt \delta_h}_{L^2_t(\Ltwo)} \leq&\,
				 \begin{multlined}[t]C(u)
				 	  \bigl( h^k+ \norm{\pt \errh}_{L^\infty_t(\Ltwo)}
				 	    + \norm{\pt^2 \errh}_{L^2_t(\Ltwo)} 
				 	  \\ +  \norm{\nabla \errh}_{L^\infty_t(\Ltwo)}
					+  
					\norm{\nabla \pt \errh}_{L^2_t(\Ltwo)}  \vphantom{h^k}
					\bigr),
				\end{multlined}
			\end{aligned}
		\end{equation}
		where $C(u)= C(1+ \|\sol\|_{H^{3}(W^{1,\infty}(\Om))} )\|u\|_{H^3(H^{k+1}(\Om))}$ does not depend on $h$ or $\beta$. 
	\end{lemma}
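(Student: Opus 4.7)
The plan is to start from the definition of $\delta_h$ in \eqref{def_defect}, split the three contributions separately, and for each apply the fundamental decomposition $\solh = \projRitz \sol - \errh$ (and its time derivatives) to isolate pieces that are either small by Ritz approximation or carry an explicit $\errh$-factor. The key bookkeeping principle is to pair \emph{smooth} factors involving $\sol$ and $\projRitz \sol$ in $L^\infty$ (in space or time) against \emph{error} factors in $L^2$, and to use the stability and approximation bounds \eqref{eq:projRitz_approx} to control $\|\projRitz \sol\|_{W^{1,\infty}}$ and its time derivatives in terms of $\|\sol\|_{W^{1,\infty}}$-type norms independently of $h$.

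For the $\|\delta_h\|_{L^2_t(\Ltwo)}$ estimate, I treat the three parts of \eqref{def_defect} as follows. First, using $\pt \solh = \pt \sol - \pt(\sol - \projRitz \sol) - \pt \errh$, rewrite
\begin{equation*}
(1+\kappa \pt \solh)\pt^2 \projRitz \sol - (1+\kappa \pt \sol)\pt^2 \sol = (1+\kappa \pt \sol)(\pt^2 \projRitz \sol - \pt^2 \sol) - \kappa \pt(\sol - \projRitz \sol)\pt^2 \projRitz \sol - \kappa \pt \errh \, \pt^2 \projRitz \sol,
\end{equation*}
so that the first two addends are bounded by $C(\sol)h^{k+1}$ via \eqref{eq:projRitz_approx} and the smoothness $\sol \in \calU$, while the third gives the $\|\pt \errh\|_{L^2_t(\Ltwo)}$ contribution. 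Second, an analogous splitting of the quadratic-gradient piece via $\nabla \solh = \nabla \sol - \nabla(\sol - \projRitz \sol) - \nabla \errh$ produces
\begin{equation*}
\ell \nabla \solh \cdot \nabla \pt \projRitz \sol - \ell \nabla \sol \cdot \nabla \pt \sol = -\ell \nabla(\sol - \projRitz \sol)\cdot \nabla \pt \projRitz \sol + \ell \nabla \sol \cdot (\nabla \pt \projRitz \sol - \nabla \pt \sol) - \ell \nabla \errh \cdot \nabla \pt \projRitz \sol,
\end{equation*}
which is bounded by $C(\sol) h^k + C(\sol)\|\nabla \errh\|_{L^2_t(\Ltwo)}$. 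Third, $\|f - f_h\|_{L^2_t(\Ltwo)} \lesssim h^k$ directly by \eqref{assumption_accuracy_fh}. Summing gives the first bound.

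For the $\|\pt \delta_h\|_{L^2_t(\Ltwo)}$ estimate, I differentiate in time and repeat the splitting. The only genuinely new terms are the products created by $\pt$: in Part~A, $\kappa \pt^2 \solh \, \pt^2 \projRitz \sol - \kappa (\pt^2 \sol)^2$, which I rewrite as a difference-of-squares plus $-\kappa \pt^2 \errh \, \pt^2 \projRitz \sol$, and in Part~B, $\ell \nabla \pt \solh \cdot \nabla \pt \projRitz \sol - \ell |\nabla \pt \sol|^2$, which I rewrite analogously as
\begin{equation*}
\ell (\nabla \pt \projRitz \sol - \nabla \pt \sol)\cdot(\nabla \pt \projRitz \sol + \nabla \pt \sol) - \ell \nabla \pt \errh \cdot \nabla \pt \projRitz \sol.
\end{equation*}
Each smooth difference is $O(h^k)$ in $L^2_t(\Ltwo)$, and each error factor is paired in $L^2_t$ with an $L^\infty_t(\Linf)$ bound on $\pt^2 \projRitz \sol$ or $\nabla \pt \projRitz \sol$, producing the $\|\pt^2 \errh\|_{L^2_t(\Ltwo)}$ and $\|\nabla \pt \errh\|_{L^2_t(\Ltwo)}$ contributions.

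The remaining cross terms contain factors with third-order time derivatives of $\sol$, namely $\kappa \pt \errh \, \pt^3 \sol$ and $\ell \nabla \errh \cdot \nabla \pt^2 \projRitz \sol$ (plus the Ritz-differenced analogues). Because $\pt^3 \sol$ lies only in $L^2_t(W^{1,\infty})$ and $\nabla \pt^2 \projRitz \sol$ only in $L^2_t(\Linf)$, the time pairing must be reversed here: the smooth factor is placed in $L^2_t$ and the error factor in $L^\infty_t$, which is exactly what produces the $\|\pt \errh\|_{L^\infty_t(\Ltwo)}$ and $\|\nabla \errh\|_{L^\infty_t(\Ltwo)}$ terms in the asserted bound. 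Finally, $\|\pt(f - f_h)\|_{L^2_t(\Ltwo)} \lesssim h^k$ by \eqref{assumption_accuracy_fh} closes the estimate. The main obstacle is precisely this bookkeeping: making the correct choice of time pairing for each term so that the smoothness budget $C(u)$ in the statement is matched, while all $\|\projRitz \sol\|_{L^\infty_t(W^{1,\infty})}$-type constants are absorbed into $C(u)$ via $\|\projRitz \varphi\|_{W^{1,\infty}} \leq \|\varphi\|_{W^{1,\infty}} + \|\varphi - \projRitz \varphi\|_{W^{1,\infty}}$ and the standard approximation bound.
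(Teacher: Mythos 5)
Your proposal is correct and follows essentially the same route as the paper: the defect is split via the Ritz projection into approximation pieces of order $h^k$ (using \eqref{eq:projRitz_approx} and \eqref{assumption_accuracy_fh}) plus explicit $\errh$-terms, which are then paired in H\"older with $L^\infty$-bounds on $\projRitz\sol$ and its derivatives; the reversed time-pairing you identify for the terms containing $\pt^3\projRitz\sol$ and $\nabla\pt^2\projRitz\sol$ is exactly what yields the $L^\infty_t(\Ltwo)$ error norms in the stated bound. The paper's own proof records the same decomposition and leaves the bookkeeping implicit, so no further comparison is needed.
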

	
	\begin{proof}
		We can rewrite the equation for the defect in \eqref{def_defect} as follows:
		\begin{equation}
		\begin{aligned}
			 \ip{\delta_h}{\phih} 
			= 
			&\, \ip{( 1 + \kappa \pt \projRitz \sol) (\pt^2\projRitz \sol-\pt^2\sol)
				 +
				   \kappa( \pt  \projRitz \sol-  \pt \sol)\pt^2  \sol }{\phih}		
			\\
			&\,	-
				\kappa\ip{ \pt \errh \, \pt^2 \projRitz \sol}{\phih}	
		  +  \ip{\ell \nabla \projRitz \sol \cdot \nabla \pt \projRitz \sol  -\ell \nabla  \sol \cdot \nabla \pt  \sol}{\phih}
				\\		
				&\,  - \ell \ip{\nabla \errh \cdot \nabla \pt \projRitz \sol}{\phih}			+\ip{f-f_h}{\phih}. 
		\end{aligned}
	\end{equation}
	Using estimate \eqref{eq:projRitz_approx} for the Ritz projection several times and the assumption on $f-f_h$, we arrive at the first bound in \eqref{estimates_deltah}. \\
		%
		\indent	The time-differentiated defect solves
		\begin{equation}
			\begin{aligned}
			&\, \ip{\pt \delta_h}{\phih} 
				\\
				 &= 
			 	\ip{	( 1 + \kappa \pt \projRitz \sol) (\pt^3\projRitz \sol-\pt^3\sol) + \kappa \pt^2 \projRitz \sol (\pt^2\projRitz \sol-\pt^2\sol) 
			 		\\
					&+  \kappa( \pt^2  \projRitz \sol-  \pt^2 \sol)\pt^2  \sol +  \kappa( \pt \projRitz \sol-  \pt \sol)\pt^3  \sol -	\kappa \pt^2 \errh \, \pt^2 \projRitz \sol-\kappa \pt \errh \, \pt^3 \projRitz \sol 
					 \\
					&+	\ell \nabla \pt \projRitz \sol \cdot \nabla \pt \projRitz \sol  -\ell \nabla \pt \sol \cdot \nabla \pt  \sol	+	\ell \nabla \projRitz \sol \cdot \nabla \pt^2 \projRitz \sol  -\ell \nabla  \sol \cdot \nabla \pt^2  \sol
					\\
					&- \ell \nabla \pt \errh \cdot \nabla \pt \projRitz \sol 	- \ell \nabla \errh \cdot \nabla \pt^2 \projRitz \sol
					+\pt(f-f_h)
				}{\phih} 
			\end{aligned}
		\end{equation}
	for all $\phih \in V_h$. We can similarly bound $\pt \delta_h$ in $L^2(0,\finalth; \Ltwo)$  using the stability and approximation properties 
		\eqref{eq:projRitz_approx}
		of the Ritz projection to arrive at the second estimate in \eqref{estimates_deltah}. 
	\end{proof}
~\\[2mm]
	We now have all the ingredients to prove our first main result on the robust finite element bounds stated in Theorem~\ref{thm:kuznetsov_space_main}. \\
	
	\begin{proof}[Proof of Theorem~\ref{thm:kuznetsov_space_main}]
		Using the bounds derived in Lemma~\ref{Lemma:Bound_defect} on the defect in \eqref{final_uniformbound_errh}, together with
\[
\norm{\pt \errh}_{L^\infty_t(\Ltwo)} \leq \sqrt{T} \norm{\pt^2 \errh}_{L^2_t(\Ltwo)}, \quad  \norm{\nabla \errh}_{L^\infty_t(\Ltwo)} \leq \sqrt{T} \norm{\nabla \pt \errh}_{L^2_t(\Ltwo)}
\]
(since $\errh(0)=\pt \errh(0)=0$) and Lemma~\ref{lemma:est_errh_zero}, by employing Gr\"onwall's inequality, we arrive at
\begin{equation} \label{eq:error_bound}
	\norm{\pt^2 \errh(t) }^2_{\Ltwo}
	+  
	\norm{\nabla \pt \errh(t)}^2_{\Ltwo}  
	+
	\int_0^t \norm{ \Deltah \errh(s)}_{\Ltwo}^2 \ds
	\leq C 
	h^{2 k } 
\end{equation}
	for all $t \in [0, \finalth]$.	Therefore, estimate \eqref{goal_est_finalth} holds. In turn, we
		 conclude that for $h\leq h_0$ and $k\geq 2$, the following estimate holds:
		\begin{align}
		h^{-1-d/6} \max \{ \norm{\pt^2 \errh(\finalth)}_{\Ltwo}   , \norm{\nabla \pt \errh(\finalth)}_{\Ltwo}    \} < C_0.
		\end{align} 
		Along the previous lines of reasoning, we also have 	
		$|\kappa|\norm{\pt \solh(\finalth)}_{\Linf} < 1$. Altogether, we have shown that \[(\solh(\finalth), \pt \solh(\finalth), \pt^2 \solh(\finalth)) \in U_h\] which means that the solution can be extended beyond $\finalth$. Hence, we can conclude that $\finalth = T$.
		Since the constant in \eqref{eq:error_bound} does not depend on $\finalth$, the bound on $\errh$ is valid on $[0,T]$.
\\
	\indent	Then writing the overall error as  \[\sol - \solh = (\sol - \projRitz \sol ) + \errh,\]  and using the approximation property of the Ritz projection  \eqref{eq:projRitz_approx} yields the bound
	 in \eqref{FE_final_est}.
	\end{proof}

	\subsection{The inviscid limit of the finite element solutions}
	\label{sec:limit_space_discrete}
	
	By using the established $\beta$-uniform finite element error bound in Theorem~\ref{thm:kuznetsov_space_main}, we can also determine the asymptotic properties of the semi-discrete solution as $\beta \rightarrow 0$. The difference  $\solhbar=\solhzero-\solhbeta$ of the semi-discrete solutions $\solhzero$ of the undamped
	\begin{equation} 
	\ip{	( 1 +  \kappa \pt \solhzero ) \pt^2 \solhzero - c^2 \Delta_h \solhzero + \ell \nabla \solhzero \cdot \nabla \pt \solhzero 
	}{\phih}
	=
	 	\ip{f_h }{\phih}, 
	\end{equation}
	 and $\solhbeta$  of the damped problem
	\begin{equation} 
		\ip{	( 1 +  \kappa \pt \solhbeta ) \pt^2 \solhbeta - c^2 \Delta_h \solhbeta - \beta \Delta_h \pt \solhbeta + \ell \nabla \solhbeta \cdot \nabla \pt \solhbeta
			}{\phih}
	 	=
 		\ip{f_h }{\phih}
	\end{equation}
		solves
	\begin{equation} \label{diff_eq}
		\begin{aligned} 
	\ip{	( 1 +  \kappa \pt \solhzero ) \pt^2 \solhbar+ \kappa \pt \solhbar \pt^2 \solhbeta - c^2 \Delta_h \solhbar 
		+ 
		\ell \nabla \solhbar \cdot \nabla \pt \solhbeta 
		+
		&\ell \nabla \solhzero \cdot \nabla \pt \solhbar
		}{\phih}
		\\
		&=
-\beta  \ip{ \Delta_h \pt \solhbeta 	}{\phih}
\end{aligned}
	\end{equation}
for all $\phih \in V_h$	on $[0,T]$. We next prove the statement of Theorem~\ref{thm:FE_beta_limit} on the inviscid limit of the finite element solutions. \\

		\begin{proof}[Proof of Theorem~\ref{thm:FE_beta_limit}]
The proof follows by testing the difference equation \eqref{diff_eq} with $\pt \solhbar$. We can rely on the identity
	\begin{equation}
		\begin{aligned}
	&\int_0^t \ip{	( 1 +  \kappa \pt \solhzero ) \pt^2 \solhbar }{\pt \solhbar} \ds \\
	&= \frac12 \ip{	( 1 +  \kappa \pt \solhzero ) \pt \solhbar(t) }{\pt \solhbar(t)} -\frac12 \int_0^t \ip{	\kappa \pt^2 \solhzero  \pt \solhbar }{\pt \solhbar} \ds
	\end{aligned}
	\end{equation}
	and the estimate
		\begin{equation}
	\int_0^t \ip{\kappa \pt \solhzero \pt^2 \solhbeta}{\pt \solhbar} \ds \lesssim \norm{\pt^2 \solhzero}_{L^\infty(\Linf)} \norm{\pt \solhbar}^2_{L^2(\Ltwo)}.
	\end{equation}
	We note that thanks to the previous analysis and the assumptions on the exact solution the following uniform bound holds :
		\[
 \norm{\pt^2 \solhzero}_{L^\infty(\Linf)} \lesssim \norm{ \pt^2 \solzero}_{L^\infty(\Linf)}+h^{-d/2}\norm{\pt^2 \errhzero}_{L^\infty(\Ltwo)} \leq C.
	\]
To estimate the $\ell$ terms, we rely on a rewriting with the help of the exact solution and integration by parts in space:
	\begin{equation}
		\begin{aligned}
			&\int_0^t\ip{	\ell \nabla \solhbar \cdot \nabla \pt \solhbeta +\ell \nabla \solhzero \cdot \nabla \pt \solhbar}{\pt \solhbar} \ds
			 \\
			&=
			\begin{multlined}[t] \int_0^t
				 \Bigl\{\ip{	\ell \nabla \solhbar \cdot \nabla \pt \solhbeta}{\pt \solhbar} +\frac12  \ip{\ell \Delta  \solzero   \pt \solhbar}{\pt \solhbar} 
				  \\
				  -\ip{\ell \nabla (\solzero-\solhzero) \cdot \nabla \pt \solhbar}{\pt \solhbar} \Bigr\}
				  \ds. 
			  \end{multlined}
		\end{aligned}
	\end{equation}
We can further transform the last term on the right by decomposing it via the Ritz projection: 
\begin{align}
&\int_0^t \ip{\ell \nabla (\solzero-\solhzero) \cdot \nabla \pt \solhbar}{\pt \solhbar} \ds\\
&=
\int_0^t \ip{\ell \nabla (\solzero- \projRitz \solzero ) \cdot \nabla \pt \solhbar}{\pt \solhbar}
+
\ip{\ell \nabla \errhzero \cdot \nabla \pt \solhbar}{\pt \solhbar} \ds
\\
&\leq%
C h
\norm{\solzero}_{L^\infty(W^{2,\infty}(\Om))} h^{-1}
\| \pt \solhbar \|^2_{L^2(\Ltwo)}
+
\int_0^t  |	\ip{\ell \nabla \errhzero \cdot \nabla \pt \solhbar}{\pt \solhbar}| \ds. 
\end{align}
Furthermore, in the last term, we have for any $\alpha_1>0$
		\begin{align}
&	\int_0^t	|	\ip{\nabla \errhzero \cdot \nabla \pt \solhbar}{\pt \solhbar}| \ds\\
	&\lesssim
	\int_0^t
	h^{-1-d/6}
	\norm{\Delta_h \errhzero}_{\Ltwo}
	\norm{\pt \solhbar}_{\Ltwo}^2 \ds
	\\
	&\lesssim 
	\max\limits_{s\in[0,t]} 
	\norm{\pt \solhbar(s)}_{\Ltwo}
	h^{-1-d/6}
	\bigl( \int_0^t		
	\norm{\Delta_h \errhzero}_{\Ltwo}^2 \ds 
	\bigr)^{1/2}
	\bigl( \int_0^t		
	\norm{\pt \solhbar(s)}_{\Ltwo}^2 \ds 
	\bigr)^{1/2}
	\\
	&\leq
	\alpha_1  
	\max\limits_{s\in[0,t]} 
	\norm{\pt \solhbar(s)}_{\Ltwo}^2
	+
	C_\alpha  h^{k-1-d/6} \norm{\pt \solhbar(s)}_{L^2(\Ltwo)}^2,
	\end{align}
where we have used the uniform bound on $\|\Delta_h \errhzero\|_{L^2(\Ltwo)}$ by Theorem~\ref{thm:kuznetsov_space_main}. Thanks also to the (assumed) uniform bound on $ \|  \solzero\|_{L^\infty(W^{2,\infty}(\Om))}$, we arrive at an estimate of the form
	\begin{equation}\label{first_est_betalim}
		\begin{aligned}
			&\| \pt \solhbar (t)\|^2_{\Ltwo}+ \|\nabla \solhbar (t) \|^2_{\Ltwo} \\
			\lesssim&\, \begin{multlined}[t] \beta \, 
				\Bigl| \int_0^t \ip{ \nabla \pt \solhbeta}{\nabla \pt \solhbar } \ds 
				\Bigr|
				+
					\alpha_1  
			\max\limits_{s\in[0,t]} 
			\norm{\pt \solhbar(s)}_{\Ltwo}^2
			+
			\| \pt \solhbar \|^2_{L^2_t(\Ltwo)}
			+
			\| \nabla \solhbar \|^2_{L^2_t(\Ltwo)}. 
			\end{multlined}
		\end{aligned}
	\end{equation}
	Observe that we cannot absorb $\nabla \pt \solhbar$ by the left-hand side. In the $\beta$ term above we thus integrate by parts in time:
	\begin{equation}
	\begin{aligned}
	\beta  \Bigl| \int_0^t \ip{ \nabla \pt \solhbeta}{\nabla \pt \solhbar } \ds
	\Bigr| 
	= 	\beta  \Bigl| \ip{ \nabla \pt \solhbeta(t)}{\nabla \solhbar(t) } -\int_0^t \ip{ \nabla \pt^2 \solhbeta}{\nabla  \solhbar } \ds \Bigr|.
	\end{aligned}
	\end{equation}
	We can then rely on the uniform bounds
	\begin{equation} 	\label{eq:nabla_pt2_solhbeta}
		\begin{aligned}
			\norm{\nabla \pt \solhbeta}_{L^\infty(\Ltwo)} \lesssim& \norm{\nabla \pt \solbeta}_{L^\infty(\Linf)}+h^{-1}\norm{\pt \errh}_{L^\infty(\Ltwo)} \leq C, \\
	\norm{\nabla \pt^2 \solhbeta}_{L^2(\Ltwo)} \lesssim& \norm{\nabla \pt^2 \solbeta}_{L^2(\Linf)}+h^{-1}\norm{\pt^2 \errh}_{L^2(\Ltwo)} \leq C,
	\end{aligned}
	\end{equation}
and Young's inequality to obtain
		\begin{equation}\label{second_est_betalim}
	\begin{aligned}
	&\| \pt \solhbar (t)\|^2_{\Ltwo}+ \|\nabla \solhbar (t) \|^2_{\Ltwo} \\
	\lesssim&\, \begin{multlined}[t] \beta^2 +	\alpha_1  
	\max\limits_{s\in[0,t]} 
	\norm{\pt \solhbar(s)}_{\Ltwo}^2+	\alpha_2  
	\max\limits_{s\in[0,t]} 
	\norm{\nabla \solhbar(s)}_{\Ltwo}^2
	\\
	+
	\| \pt \solhbar \|^2_{L^2_t(\Ltwo)}+\| \nabla \solhbar \|^2_{L^2_t(\Ltwo)}
	\end{multlined}
	\end{aligned}
	\end{equation}
	for any $\alpha_1$, $\alpha_2>0$.
From \eqref{second_est_betalim} by taking the maximum over $t \in (0, \tilde{t})$ for some $\tilde{t} < T$, choosing $\alpha_{1,2}>0$ small enough (independently of $h$ and $\beta $) so that the corresponding terms can be absorbed, and then applying Gr\"onwall's inequality, we arrive at
	\begin{equation}
\begin{aligned}
\| \pt \solhbar \|^2_{L^\infty(\Ltwo)}+ \|\nabla \solhbar  \|^2_{L^\infty(\Ltwo)} \lesssim \beta^2,
\end{aligned}
\end{equation}
as claimed.
\end{proof}

We note that this result matches the convergence order of the exact solutions of the damped Kuznetsov equation as $\beta \rightarrow  0$; see~\cite[Thm.\ 7.1]{kaltenbacher2022parabolic}.

\section{Robust semi-implicit time discretization} \label{sec:Semi_implicit}

We now turn our attention to the analysis of a fully discrete scheme given by \eqref{eq:Euler} with the aim of proving Theorems~\ref{thm:ErrorSemiImplicit} and~\ref{thm:FullyDiscrete_beta_limit}. 
We first collect several useful results when using the discrete derivatives $\ptau$ defined in \eqref{eq:def_ptau}.
We state relations
which mimic the product rule:
\begin{equation} \label{eq:product_rule_tau}
\begin{aligned} 
	\ptau \bigl( a^{n+1} b^{n+1} \bigr) &= 
	\bigl( \ptau a^{n+1} \bigr)  b^{n+1} + a^{n}  \bigl(  \ptau b^{n+1} \bigr)
	\\
	&= 	 \bigl( \ptau a^{n+1} \bigr)  b^{n} + a^{n+1}  \bigl(  \ptau b^{n+1} \bigr),
\end{aligned}
\end{equation}
the integration by parts formula:
\begin{align} \label{eq:sum_by_parts}
	 \tau \sum\limits_{n=1}^{N}   a^{n+1} \, \ptau b^{n+1}
	&=
	a^{N+1} b^{N+1} - a^{1} b^{1}
-
 \tau \sum\limits_{n=1}^{N} \, \ptau  a^{n+1}  b^{n+1} ,
\end{align}
as well as  the fundamental theorem of calculus:
\begin{equation} \label{eq:ftc_Euler_simple}
	\norm{a^N}_{\Ltwo}^2 - \norm{a^0}_{\Ltwo}^2 \leq 2 \tau \sum\limits_{j=1}^N \ip{  a^j }{ \ptau a^j};
\end{equation}
see, e.g., \cite{HocP17}. 
However due to the nonlinear structure in the highest order term, we need the following extension of  \eqref{eq:ftc_Euler_simple}.
\begin{lemma} \label{lem:ftc_Euler} 
	Let $\max\limits_{j=1,\ldots ,N }\norm{\omega_j}_{\Linf} + \norm{\ptau \omega_j}_{\Linf} \leq C_{\omega}$ be uniformly bounded from above and from below with $\min\limits_{j=1,\ldots ,N } \omega_j \geq \alpha >0$. Then, it holds
\begin{equation} \label{eq:ftc_Euler}
	\norm{a^N}_{\Ltwo}^2 
	\lesssim  \norm{a^0}_{\Ltwo}^2 
	+  \tau \sum\limits_{j=1}^N \ip{ \omega_j  a^j }{  \ptau a^j} 
	+
		\norm{a^j}_{\Ltwo}^2  ,
\end{equation}
with constants that only depend on $C_{\omega}$ and $\alpha$.
\end{lemma}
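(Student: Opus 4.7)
The plan is to derive a discrete weighted energy identity by combining an elementary algebraic decomposition with a summation by parts, and then use the lower bound $\omega_N \geq \alpha$ to extract $\|a^N\|_{\Ltwo}^2$.

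First, I would apply the pointwise identity $a^j(a^j - a^{j-1}) = \tfrac{1}{2}\bigl((a^j)^2 - (a^{j-1})^2\bigr) + \tfrac{1}{2}(a^j - a^{j-1})^2$ and multiply by $\omega_j/\tau$ to obtain
\[
\omega_j a^j \, \ptau a^j \;=\; \frac{1}{2\tau}\, \omega_j \bigl((a^j)^2 - (a^{j-1})^2\bigr) + \frac{\tau}{2}\, \omega_j (\ptau a^j)^2.
\]
Taking the $L^2$ inner product with $1$, multiplying by $\tau$, and summing from $j=1$ to $N$, the last (squared-difference) term is nonnegative since $\omega_j \geq \alpha > 0$, and therefore can be discarded in an upper bound for $\|a^N\|_{\Ltwo}^2$.

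Second, I would apply summation by parts (Abel's transformation) to the remaining sum, producing
\[
\frac{1}{2}\sum_{j=1}^{N} \ip{\omega_j}{(a^j)^2 - (a^{j-1})^2} \;=\; \frac{1}{2}\ip{\omega_N}{(a^N)^2} - \frac{1}{2}\ip{\omega_1}{(a^0)^2} - \frac{\tau}{2}\sum_{j=1}^{N-1} \ip{\ptau \omega_{j+1}}{(a^j)^2},
\]
so that a discrete derivative of the weight appears naturally in the remainder, which is precisely the quantity that is controlled by the hypothesis $\|\ptau \omega_j\|_{\Linf} \leq C_\omega$.

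Third, using the pointwise bound $\omega_N \geq \alpha$ gives $\tfrac{\alpha}{2}\|a^N\|_{\Ltwo}^2 \leq \tfrac{1}{2}\ip{\omega_N}{(a^N)^2}$, while $\ip{\omega_1}{(a^0)^2} \leq C_\omega \|a^0\|_{\Ltwo}^2$ and $\ip{\ptau \omega_{j+1}}{(a^j)^2} \leq C_\omega \|a^j\|_{\Ltwo}^2$ by the assumed uniform bound on $\omega_j$ and $\ptau \omega_j$. Rearranging and dividing by $\alpha/2$ then yields the claim with constants depending only on $C_\omega$ and $\alpha$. I do not expect a serious obstacle here: this is a weighted extension of the unweighted identity \eqref{eq:ftc_Euler_simple}, and the only care required is in the index shifts so that the Abel transformation produces $\ptau \omega_{j+1}$ rather than an uncontrolled $\omega_{j+1} - \omega_j$; the control of $\ptau \omega_j$ in the hypothesis is exactly what absorbs this term uniformly in $\tau$.
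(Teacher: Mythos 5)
Your proof is correct, but it takes a genuinely different route from the paper's. The paper proves the lemma in one line by substituting $\widetilde{a}^j = \sqrt{\omega_j}\, a^j$ into the unweighted inequality \eqref{eq:ftc_Euler_simple} and then unwinding $\ptau \widetilde{a}^j$ with the discrete product rule \eqref{eq:product_rule_tau}; there the lower bound $\omega_j \geq \alpha$ is needed twice, once to extract $\alpha\norm{a^N}_{\Ltwo}^2 \leq \norm{\widetilde{a}^N}_{\Ltwo}^2$ and once to make $x \mapsto \sqrt{x}$ Lipschitz on $[\alpha,\infty)$ so that $\norm{\ptau \sqrt{\omega_j}}_{\Linf} \lesssim \alpha^{-1/2}\norm{\ptau \omega_j}_{\Linf}$. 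You instead work directly with the weighted sum: the algebraic identity $a^j(a^j-a^{j-1}) = \tfrac12\bigl((a^j)^2-(a^{j-1})^2\bigr) + \tfrac12(a^j-a^{j-1})^2$ produces a nonnegative dissipation term $\tfrac{\tau}{2}\,\omega_j(\ptau a^j)^2$ that you discard (this is exactly where the inequality, rather than an identity, arises — consistent with \eqref{eq:ftc_Euler_simple} itself), and the Abel transformation shifts the difference onto the weight, yielding $\ptau\omega_{j+1}$ for $j=1,\ldots,N-1$, all covered by the hypothesis. Your route is more self-contained (it does not presuppose \eqref{eq:ftc_Euler_simple} and avoids manipulating $\ptau\sqrt{\omega_j}$), at the cost of being longer; the paper's route is shorter but leans on the Lipschitz property of the square root, which it leaves implicit. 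Both arguments deliver the bound with constants depending only on $C_\omega$ and $\alpha$, and your index bookkeeping in the summation by parts is accurate.
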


\begin{proof}
	Simply setting $\widetilde{a}_j = \sqrt{\omega_j} a_j$  in \eqref{eq:ftc_Euler_simple} and using \eqref{eq:product_rule_tau} leads to the claim.
\end{proof}
~\\[2mm]
In addition, we need the following error bounds in the defects.
We state the result here in a general form, but postpone the proof to the Appendix~\ref{sec:appendix}.
\begin{lemma} \label{lem:diff_ptau_pt}
	Let $m\geq 0$ and $\sol \in H^{m+1}(0,T;H^k(\Omega))$. Then, for $0 \leq \ell \leq m$, it holds
	\begin{equation}
	\norm{\ptau^\ell \sol(\tn{n})}_{H^k(\Omega)} \leq C \norm{\sol}_{H^{\ell+1}(H^k(\Omega))} 
\end{equation}
with a constant $C$ independent of $\tau$, 
and for $0\leq \ell_1,\ell_2 \leq m$ it holds
	\begin{align}
		&\, \norm{ \ptau^{m-\ell_1} \pt^{\ell_1} \sol(\tn{n}) - \ptau^{m-\ell_2 } \pt^{\ell_2} \sol(\tn{n})
			}_{H^k(\Omega)}^2
		\leq 
		C_m \,
		\tau 
		\int_{\tn{n-m}}^{\tn{n}}
		\norm{
			\pt^{m+1} \sol(s)   
		}_{H^k(\Omega)}^2
		\ds 
	\end{align}
	with a constant $C_m$ that only depends on $m$. If $\sol \in H^{m+2}(0,T;H^k(\Omega))$,
	then we further have
	\begin{align}
	&\, \norm{ \ptau^{m-\ell_1} \pt^{\ell_1} \sol(\tn{n}) - \ptau^{m-\ell_2 } \pt^{\ell_2} \sol(\tn{n})
	}_{H^k(\Omega)}
	\leq 
	 C \tau \norm{\sol  }_{H^{m+2}(H^k(\Omega))} .
\end{align}	
\end{lemma}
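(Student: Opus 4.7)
The plan is to base all three estimates on the integral representation
\begin{equation*}
\ptau^p \phi(\tn{n}) = \frac{1}{\tau^p} \int_0^\tau \cdots \int_0^\tau \pt^p \phi\bigl(\tn{n-p} + s_1 + \cdots + s_p\bigr) \dint{s_1} \cdots \dint{s_p},
\end{equation*}
which follows for sufficiently smooth $\phi$ from $\ptau \phi(\tn{n}) = \tau^{-1} \int_0^\tau \pt\phi(\tn{n-1} + s) \dint{s}$ by iteration and the commutativity of $\pt$ and $\ptau$. Combined with Jensen's inequality this turns $\|\ptau^p \phi(\tn{n})\|_{H^k}^2$ into an $L^2$-average of $\|\pt^p \phi\|_{H^k}^2$ over a time interval contained in $[\tn{n-p}, \tn{n}]$ of length at most $p\tau$.

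For the first inequality I would apply the representation with $\phi = \sol$ and $p = \ell$, and then use the one-dimensional Sobolev embedding $H^1(0,T) \hookrightarrow L^\infty(0,T)$ to bound $\|\pt^\ell \sol\|_{L^\infty(H^k(\Omega))} \lesssim \|\pt^\ell \sol\|_{H^1(H^k(\Omega))} \leq \|\sol\|_{H^{\ell+1}(H^k(\Omega))}$.

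For the second inequality my strategy is to telescope
\begin{equation*}
\ptau^{m-\ell_1} \pt^{\ell_1} \sol(\tn{n}) - \ptau^{m-\ell_2} \pt^{\ell_2} \sol(\tn{n}) = \sum_{j} \pm \, \ptau^{m-j-1}\bigl[\ptau \pt^j \sol - \pt^{j+1} \sol\bigr](\tn{n}),
\end{equation*}
where $j$ runs between $\min(\ell_1,\ell_2)$ and $\max(\ell_1,\ell_2)-1 \leq m-1$. Each inner bracket is the standard backward-Euler quadrature residual. The decisive point is that one should commute the $m-j-1$ outer time derivatives through $\ptau$ and $\pt$ \emph{before} applying the FTC, so that after two uses of the FTC the integrand features $\pt^{m+1}\sol$ rather than the much weaker $\pt^{j+2}\sol$. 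Concretely, $\pt^{m-j-1}[\ptau \pt^j \sol - \pt^{j+1}\sol](t)$ equals $\tau^{-1}\int_{t-\tau}^t [\pt^m \sol(s) - \pt^m \sol(t)]\dint{s} = -\tau^{-1}\int_{t-\tau}^t \int_s^t \pt^{m+1}\sol(r)\dint{r}\dint{s}$. Bounding this by Cauchy--Schwarz with $\tau^{1/2}\|\pt^{m+1}\sol\|_{L^2(t-\tau,t;H^k)}$ and then feeding the estimate into the iterated integral representation of $\ptau^{m-j-1}$, followed by Jensen's inequality, yields $\tau \int_{\tn{n-m+j}}^{\tn{n}} \|\pt^{m+1}\sol\|_{H^k}^2 \dint{r} \leq \tau \int_{\tn{n-m}}^{\tn{n}} \|\pt^{m+1}\sol\|_{H^k}^2 \dint{r}$ per summand, and the number of summands is bounded by $m$.

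For the third inequality, the additional regularity $\sol \in H^{m+2}(0,T;H^k(\Omega))$ allows the one-dimensional embedding to be applied one level higher, giving $\|\pt^{m+1} \sol\|_{L^\infty(H^k)} \lesssim \|\sol\|_{H^{m+2}(H^k)}$; inserting this into the second bound and using $|\tn{n} - \tn{n-m}| \leq m\tau$ produces the $\mathcal{O}(\tau)$ estimate after a square root. The main obstacle is really the bookkeeping in the telescoping step: applying the FTC only to the inner quadrature residual would burn derivatives unevenly and land at $\pt^{j+2}\sol$ instead of the target $\pt^{m+1}\sol$, so the order in which $\pt$ and $\ptau$ are exchanged has to be tracked very carefully.
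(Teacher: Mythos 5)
Your proposal is correct and follows essentially the same route as the paper's proof: the iterated integral representation of $\ptau^{p}$, the telescoping sum $\sum_{j}\ptau^{m-1-j}(\ptau-\pt)\pt^{j}\sol$, and two applications of the fundamental theorem of calculus to express the quadrature residual through $\pt^{m+1}\sol$ before concluding with Cauchy--Schwarz/Jensen. The only difference is organizational (you bound the inner residual pointwise and then average over the iterated integral, while the paper writes the summand as a single nested integral), which does not change the argument.
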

~\\[2mm]
These identities will be used throughout the proofs in the following section. 
Analogously to the approach in the finite element analysis,	we
define the fully discrete error by
\begin{equation} \label{eq:def_full_discrete_error}
	\errhn{n} \coloneqq \projRitz \soln{n} - \solhn{n}
\end{equation}
with $\soln{n} = \sol(\tn{n})$, and proceed to investigate it.

\begin{myproposition} \label{prop:intermediate_step}
	Under the assumptions of Theorem~\ref{thm:ErrorSemiImplicit},
	for all $n = 2, \ldots  N+1$,
 the approximation $\solhn{n}$ defined in \eqref{eq:Euler} exists
  and the error defined in \eqref{eq:def_full_discrete_error} satisfies  
	\begin{equation} \label{eq:bounds_intermediate}
		\begin{aligned}
			\norm{ \ptau^2 \errhn{n}}^2_{\Ltwo}
			&+  
			\norm{\nabla   \ptau  \errhn{n} }^2_{\Ltwo}  
			+
			\tau \sum_{j=1}^{n}
			\norm{ \Delta_h  \errhn{j} }^2_{L^2(\Omega)}  
			\leq C  \bigl( \tau + h^k \bigr)^2,
		\end{aligned}
	\end{equation}
as well as
\begin{equation} \label{def:finaln}
	\begin{aligned}
		&\, h^{-1-d/6 - \varepsilon} \norm{\ptau^2 \errhn{n}}_{\Ltwo} \leq C_0,  \\
		&\, h^{-1-d/6 - \varepsilon} \norm{\nabla \ptau \errhn{n}}_{\Ltwo} 
		\leq C_0 ,
		\\
		&\, h^{-1-d/6 - \varepsilon} \bigl( \tau \sum_{j=1}^{n} \norm{ \Delta_h \errhn{j}}_{\Ltwo}^2 \bigr)^{1/2} 
		\leq C_0, 
	\end{aligned}
\end{equation}
with some constants $C$, $C_0 >0$ that are independent of $h$, $\tau$, $n$, and $\beta$.
\end{myproposition}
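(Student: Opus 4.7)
The plan is to mirror at the fully discrete level the semi-discrete bootstrap from Section~\ref{sec:FE_analysis}. I would define $\finaln$ as the largest index $n \in \{2,\ldots,N+1\}$ for which $\solhn{j}$ exists uniquely for $j\leq n$ and \eqref{def:finaln} holds with $C_0$ replaced by a slightly smaller constant $\tilde C_0$, and then show $\finaln = N+1$. The base case $\finaln \geq 2$ follows from the Taylor-type definition \eqref{eq:def_Euler_inits} together with Lemma~\ref{lem:diff_ptau_pt} and \eqref{eq:projRitz_approx}, which give $\norm{\ptau \errhn{1}}_{\Ltwo}+\norm{\nabla \ptau \errhn{1}}_{\Ltwo}\lesssim \tau+h^k$, while $\norm{\ptau^2 \errhn{2}}_{\Ltwo}\lesssim \tau+h^k$ is obtained by evaluating \eqref{eq:Euler} at $n=1$, subtracting the consistent identity for $\projRitz \sol$, and arguing as in Lemma~\ref{lemma:est_errh_zero}. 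The CFL condition $\tau \leq C h^{1+d/6+2\varepsilon}$ and $k\geq 2$ then ensure that \eqref{def:finaln} holds at $n=2$. Existence and uniqueness of $\solhn{n+1}$ from $\solhn{n},\solhn{n-1}$ is guaranteed by solving the linear system \eqref{eq:Euler_assemble}, whose bilinear form is coercive thanks to $1+\kappa\ptau \solhn{n}\geq \gamma>0$; this non-degeneracy is an inverse-estimate consequence of the first bound in \eqref{def:finaln}, exactly as in Lemma~\ref{lem:a-priori_bounds_on_solh}.

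For $n\leq \finaln$, the key step is the fully discrete analogue of Proposition~\ref{prop:fe_FirstEstimate}: apply $\ptau$ to \eqref{eq:Euler}, subtract the corresponding identity for $\projRitz \soln{n+1}$, test with $\ptau^2\errhn{n+1}$, and sum over $j\leq n$. Lemma~\ref{lem:ftc_Euler} converts the mass contribution into $\norm{\ptau^2\errhn{n+1}}^2_{\Ltwo}$, the summation-by-parts formula \eqref{eq:sum_by_parts} together with the discrete product rule \eqref{eq:product_rule_tau} produces the term $\norm{\nabla \ptau \errhn{n+1}}^2_{\Ltwo}$ from the $c^2$ Laplacian, and the $\beta$ term yields a non-negative contribution $\tau\beta\sum_{j}\norm{\nabla \ptau^2\errhn{j+1}}^2_{\Ltwo}$. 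The main obstacle, as in the semi-discrete case, is the nonlinear term
\begin{equation}
\tau \sum_{j}\ip{\ell\,\nabla \solhn{j}\cdot \nabla \ptau^2\errhn{j+1}}{\ptau^2\errhn{j+1}},
\end{equation}
which I would treat by splitting $\nabla \solhn{j}=\nabla \soln{j}-\nabla(\soln{j}-\projRitz\soln{j})-\nabla \errhn{j}$. The first piece is handled by the spatial identity $\ptau^2\errhn{j+1}\nabla\ptau^2\errhn{j+1}=\tfrac12\nabla(\ptau^2\errhn{j+1})^2$ followed by integration by parts, trading $\nabla\soln{j}$ for $\Delta \soln{j}\in\Linf$; the Ritz remainder is controlled as in \eqref{eq:nonlinear_term_v1} via $h^{-1}\norm{\sol-\projRitz\sol}_{\Woneinf}\lesssim \norm{\sol}_{W^{2,\infty}(\Om)}$; the $\nabla \errhn{j}$ piece is bounded by $\norm{\errhn{j}}_{\Woneinf}\lesssim h^{-1-d/6-\varepsilon}\norm{\Delta_h \errhn{j}}_{\Ltwo}$ combined with the third bootstrap bound in \eqref{def:finaln}, producing an $\alpha$-absorbable contribution plus $C_\alpha \tau \sum \norm{\ptau^2\errhn{j+1}}^2_{\Ltwo}$. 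Unlike the continuous case, this procedure also generates a commutator between the frozen coefficient $\solhn{j}$ and the index shift in $\ptau^2\errhn{j+1}$; Lemma~\ref{lem:diff_ptau_pt} controls it at order $\tau$ in the relevant norms. A second test with $-\Delta_h \errhn{j+1}$ in the non-differentiated error equation, in the spirit of Proposition~\ref{prop:kuznetsov_space_main_v2}, supplies control of $\tau\sum_{j}\norm{\Delta_h\errhn{j}}^2_{\Ltwo}$.

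The fully discrete defect $\delta_h^{n+1}$ collects Ritz consistency, data error, and the time-stepping truncation; its $\Ltwo$- and $\ptau$-norms are $\lesssim \tau+h^k$ by Lemma~\ref{lem:diff_ptau_pt} and arguments analogous to Lemma~\ref{Lemma:Bound_defect}. Combining the two tests, choosing $\alpha$ small enough to absorb the borderline terms, and applying a discrete Gr\"onwall inequality (valid since $\tau\leq\tau_0$) then yields \eqref{eq:bounds_intermediate} on $\{0,\ldots,\finaln\}$. To close the bootstrap, observe that \eqref{eq:bounds_intermediate} together with the CFL condition gives
\begin{equation}
h^{-1-d/6-\varepsilon}(\tau+h^k) \lesssim h^{\varepsilon}+h^{k-1-d/6-\varepsilon},
\end{equation}
which is smaller than $\tilde C_0$ for $h\leq h_0$ because $k\geq 2$ and $\varepsilon\in(0,\tfrac12-\tfrac{d}{12})$. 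This contradicts the maximality of $\finaln$ unless $\finaln=N+1$. The subtlest point throughout is the fully discrete version of the identity $\pt^2\errh\nabla \pt^2\errh=\tfrac12\nabla(\pt^2\errh)^2$: a naive transcription would require synchronizing two different $\ptau$-iterates, so I plan to first absorb the resulting mismatch via Lemma~\ref{lem:diff_ptau_pt} (at the price of an $O(\tau)$ error) before invoking the spatial identity.
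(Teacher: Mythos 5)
Your proposal follows essentially the same route as the paper's proof: an induction over $n$ (your maximal-index bootstrap is its discrete equivalent) with base case $n=2$ built from the Taylor-type initial data, existence of $\solhn{n+1}$ via the linear system for $\errhn{n+1}$, a two-step testing procedure with $\ptau^2\errhn{j+1}$ in the discretely differentiated error equation and $-\Delta_h\errhn{j+1}$ in the undifferentiated one, the splitting $\nabla\solhn{j}=\nabla\soln{j}-\nabla(\soln{j}-\projRitz\soln{j})-\nabla\errhn{j}$ for the critical nonlinear term, and a discrete Gr\"onwall argument combined with the CFL-type condition to close the bootstrap. The one place where your justification is thinner than what is needed is the existence step: coercivity of the operator $\resolvent{n}$ does not follow from $1+\kappa\ptau\solhn{n}\geq\gamma>0$ alone, since the form also contains the first-order term $\tau\ell\ip{\nabla\solhn{n}\cdot\nabla\varphi_h}{\varphi_h}$, which must be absorbed using exactly the expansion you deploy later together with the $C_0$ bounds and the CFL condition (this is the content of Lemma~\ref{lem:existence_Euler_approximation}); conversely, the index-synchronization issue you anticipate for the discrete identity does not actually arise, because the product $\ptau^2\errhn{j+1}\,\nabla\ptau^2\errhn{j+1}$ already carries matching indices and only the coefficient is frozen at step $j$.
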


The rest of Section~\ref{sec:Semi_implicit} is devoted to the proof of Proposition~\ref{prop:intermediate_step} which we conduct via induction over $n$.
In Section~\ref{subsec:induction_base}, we first show that the statement holds in the case $n=2$
as induction basis.
In the following Section~\ref{subsec:induction_step},
we perform the step from $n$ to $n+1$ to conclude that the statement of Proposition~\ref{prop:intermediate_step} holds. Then Theorem~\ref{thm:ErrorSemiImplicit} will follow in a straightforward manner.

\subsection{Induction basis}
\label{subsec:induction_base}

This part is dedicated to the induction base $n=2$. We first study the error induced by the initial values for $\errhn{1}$, and then proceed to bound $\errhn{2}$ in a series of lemmas.
To keep the presentation short, we formulate several of them 
such that they also apply to the induction step, assuming that the bounds in Proposition~\ref{prop:intermediate_step} already hold up to some $n\geq 2$.

	\begin{lemma} \label{lem:errhn1}
	Let the assumptions of Theorem~\ref{thm:ErrorSemiImplicit} hold,
	and let the initial values be defined by \eqref{eq:def_Euler_inits}.
Then, $\errhn{0} = 0$ and
	\begin{align}
	 \norm{\ptau^2 \errhn{1}}_{\Ltwo}
	 +
	 	\tau^{-1} \norm{\ptau \nabla \errhn{1}}_{\Ltwo}
		+
		\tau^{-2} \norm{\Delta_h \errhn{1} }_{\Ltwo}	
		&\lesssim
		\tau + h^k, 
	\end{align}	
	where the 
	 constant is independent of $h$, $\tau$ and $\beta$.
\end{lemma}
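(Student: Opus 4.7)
The plan is to compute $\errhn{1}$ essentially in closed form using Taylor's theorem together with the fact that $w_0$ is constructed so that it agrees with $\projRitz \pt^2 \sol(0)$, after which all three norms in the claim reduce to bounds on a projected Taylor remainder of size $O(\tau^3)$.

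First, $\errhn{0} = \projRitz \solinit - \projRitz \solinit = 0$ is immediate from \eqref{eq:def_Euler_inits_u_1}. For $n=1$, I would invoke the integral form of Taylor's theorem,
\[
\sol(\tau) = \solinit + \tau \soltinit + \tfrac{\tau^2}{2}\pt^2 \sol(0) + R(\tau),
\qquad
R(\tau) = \tfrac{1}{2}\int_0^\tau (\tau - s)^2 \pt^3 \sol(s)\dint{s},
\]
which is justified by the regularity $\sol\in\calU$. Evaluating \eqref{ibvp:Kuznetsov} at $t=0$ gives
\[
\pt^2 \sol(0) = (1+\kappa \soltinit)^{-1}\bigl(c^2 \Delta \solinit + \beta \Delta \soltinit - \ell \nabla \solinit \cdot \nabla \soltinit + f(0)\bigr),
\]
so comparing with the definition of $w_0$ in \eqref{eq:def_Euler_inits} yields $w_0 = \projRitz \pt^2 \sol(0)$. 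Applying $\projRitz$ to the Taylor expansion, using linearity and $\projRitz w_0 = w_0$, the polynomial part cancels exactly against \eqref{eq:def_Euler_inits_u_1}, leaving the clean identity $\errhn{1} = \projRitz R(\tau)$.

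With this reduction, I would then bound $R(\tau)$ directly in the relevant norms via the integral representation, obtaining $\norm{R(\tau)}_{\Htwo}\lesssim \tau^3\norm{\pt^3 \sol}_{L^\infty(\Htwo)}$, which is finite thanks to $\sol\in\calU$. These bounds transfer to $\projRitz R(\tau)$ as follows: $H^1$-stability of $\projRitz$ is built into its definition, $L^2$-stability follows from the triangle inequality combined with \eqref{eq:projRitz_approx}, and for the discrete Laplacian I would use that $\pt^3 \sol(s)$ vanishes on $\partial\Omega$ (by the regularity class $\calU$) to integrate by parts and identify $\Delta_h \projRitz R(\tau)$ with $\projLtwo \Delta R(\tau)$, giving $\norm{\Delta_h \projRitz R(\tau)}_{\Ltwo}\le \norm{\Delta R(\tau)}_{\Ltwo}$. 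Altogether, $\norm{\errhn{1}}_{\Ltwo} + \norm{\nabla \errhn{1}}_{\Ltwo} + \norm{\Delta_h \errhn{1}}_{\Ltwo} \lesssim \tau^3$ with a $\beta$-uniform constant (the $\beta \Delta \soltinit$ term of $\pt^2 \sol(0)$ is hidden inside $w_0$ and cancels).

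To conclude, since $\errhn{0} = 0$, the discrete derivatives reduce to $\ptau \errhn{1} = \tau^{-1}\errhn{1}$ and $\ptau^2 \errhn{1} = \tau^{-2}\errhn{1}$, so each term on the left-hand side of the claim is bounded by $O(\tau)$, which in turn is dominated by $\tau + h^k$. I do not expect any genuine obstacle here; the only mildly delicate point is the $\Delta_h$ estimate, where one has to be a bit careful to invoke Dirichlet boundary compatibility of $\pt^3 \sol$ so that integration by parts produces the $L^2$-projected Laplacian rather than extra boundary contributions.
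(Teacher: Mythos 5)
Your proposal is correct and is essentially the paper's argument: the paper's proof is a one-line appeal to the definitions in \eqref{eq:def_Euler_inits}, and you simply fill in the details via the Taylor remainder $\errhn{1}=\projRitz R(\tau)$ with $\norm{R(\tau)}_{\Htwo}\lesssim\tau^3$. One small notational slip: by the convention in \eqref{eq:def_ptau} one has $\ptau^2 \errhn{1}=\ptau \errhn{1}=\tau^{-1}\errhn{1}$, not $\tau^{-2}\errhn{1}$; since your bound $\norm{\errhn{1}}_{\Ltwo}\lesssim\tau^3$ accommodates even the stronger $\tau^{-2}$ scaling, the conclusion is unaffected.
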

	
\begin{proof}
	Recalling that by \eqref{eq:def_ptau} we have
	$\ptau^2 \errhn{1} = \ptau \errhn{1}$,
	the estimate directly follows from the definitions of $\solhn{0}$, $\solhn{1}$, and $w_0$ in \eqref{eq:def_Euler_inits}.
\end{proof}
~\\[2mm]
Along the lines of Lemma~\ref{lem:a-priori_bounds_on_solh} in Section~\ref{sec:FE_analysis},
we next derive some useful bounds on the numerical solution.

	\begin{lemma} \label{Lemma:UniformEst_solhn}
	Let the assumptions of Theorem~\ref{thm:ErrorSemiImplicit} hold.
	 If the assertions of Proposition~\ref{prop:intermediate_step} hold for 
	up to $n$, then the following bound holds for $j=1,\ldots, n$: 
	\begin{align}
		\norm{\solhn{j}}_{W^{1,\infty}(\Om)}
		+
		\norm{\nabla \ptau \solhn{j}}_{\Linf}
		+
		\norm{\ptau^2 \solhn{j}}_{\Linf}
		\lesssim 1,
	\end{align}
	and, in addition, 
	\begin{equation} \label{gamma_bound_Euler}
		1 + \kappa \ptau \solhn{j} \geq 
		\quasilowerbound > 0,  \quad j = 1 ,\ldots n,
	\end{equation}
	where the constant $\gamma$ does not depend on $h$, $\tau$, $n$,
	or $\beta$. 
\end{lemma}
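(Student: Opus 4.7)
The plan is to mimic the argument of Lemma~\ref{lem:a-priori_bounds_on_solh} at the fully discrete level. I will split
$\solhn{j} = \projRitz \soln{j} - \errhn{j}$,
and (using linearity and time-independence of $\projRitz$) the same decomposition for $\ptau \solhn{j}$ and $\ptau^2 \solhn{j}$, then control the Ritz and error pieces separately. For the Ritz pieces, I combine $L^\infty$ stability of $\projRitz$ --- which follows from~\eqref{eq:projRitz_approx} with $\ell=0$, $p=\infty$, yielding $\norm{\projRitz v}_{\Linf} \lesssim \norm{v}_{\Woneinf}$ --- with Lemma~\ref{lem:diff_ptau_pt} to replace $\ptau^m$ by $\pt^m$ up to a $\tau$-small remainder, and invoke $\sol \in \calU$ to obtain uniform $W^{1,\infty}$ (respectively $L^\infty$) bounds for $\projRitz \soln{j}$, $\ptau \projRitz \soln{j}$, and $\ptau^2 \projRitz \soln{j}$.

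For the error contributions, the crucial observation is that the inductive hypothesis~\eqref{def:finaln} controls $\norm{\ptau^2 \errhn{i}}_{\Ltwo}$ and $\norm{\nabla \ptau \errhn{i}}_{\Ltwo}$ by $C_0 \, h^{1+d/6+\varepsilon}$ for all $i \leq n$. Since $\errhn{0}=0$ and hence $\ptau \errhn{0} = 0$ by the convention in~\eqref{eq:def_ptau}, the telescoping identities
\begin{equation*}
\ptau \errhn{j} = \tau \sum_{i=1}^{j} \ptau^2 \errhn{i}, \qquad \errhn{j} = \tau \sum_{i=1}^{j} \ptau \errhn{i},
\end{equation*}
propagate these bounds (losing only a factor $T$) to $\norm{\ptau \errhn{j}}_{\Ltwo}$ and $\norm{\nabla \errhn{j}}_{\Ltwo}$, of the same order $h^{1+d/6+\varepsilon}$. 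Applying the inverse estimates~\eqref{eq:inv_estimate} to the $V_h$-functions $\errhn{j}$, $\ptau \errhn{j}$, $\ptau^2 \errhn{j}$ converts the $L^2$ bounds into $W^{1,\infty}$ and $L^\infty$ bounds at the cost of an $h^{-d/2}$ factor. The resulting exponent is $1-d/3+\varepsilon$, which is strictly positive for $d\leq 3$; hence these error contributions vanish as $h\to 0$ and can be absorbed into the constant, leaving only the Ritz pieces to determine the bound.

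For the non-degeneracy~\eqref{gamma_bound_Euler}, I will write
\begin{equation*}
1 + \kappa \ptau \solhn{j} = \bigl(1 + \kappa \pt \sol(\tn{j})\bigr) + \kappa \bigl(\ptau \projRitz \soln{j} - \pt \sol(\tn{j})\bigr) - \kappa \, \ptau \errhn{j}.
\end{equation*}
The first summand is bounded below by $\tilde{\gamma}$ via~\eqref{assumptions_exact_sol}. The middle term is $O(\tau + h)$ in $L^\infty$ by Lemma~\ref{lem:diff_ptau_pt} together with the $L^\infty$ approximation property of $\projRitz$ in~\eqref{eq:projRitz_approx}. The last term is $O(h^{1-d/3+\varepsilon})$ in $L^\infty$ by the inverse-estimate analysis above. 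Choosing $h \leq h_0$ and $\tau \leq \tau_0$ sufficiently small lets me absorb both perturbations into a strict fraction of $\tilde{\gamma}$, yielding $1 + \kappa \ptau \solhn{j} \geq \gamma > 0$ with $\gamma$ independent of $h$, $\tau$, $n$, and $\beta$. The main subtlety I anticipate is ensuring that the $h^{-d/2}$ loss from the inverse estimates is strictly offset by the $\varepsilon$-cushion built into~\eqref{def:finaln}; this is precisely why the inductive set carries an additional $h^\varepsilon$ compared to the semi-discrete analogue~\eqref{def:finaltimeh}.
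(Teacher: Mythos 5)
Your argument is correct and is exactly the route the paper intends: the paper states this lemma without a written proof, merely noting that it proceeds "along the lines of Lemma~\ref{lem:a-priori_bounds_on_solh}", and your decomposition into Ritz part plus $\errhn{j}$, the telescoping of the inductive $C_0$ bounds in \eqref{def:finaln} down to $\ptau\errhn{j}$ and $\nabla\errhn{j}$, and the absorption of the $h^{-d/2}$ inverse-estimate loss via the exponent $1-d/3+\varepsilon>0$ is precisely that argument transplanted to the discrete-in-time setting (with Lemma~\ref{lem:diff_ptau_pt} correctly supplying the $\ptau\mapsto\pt$ replacement). The only cosmetic remark is that the $\varepsilon$-cushion is not actually what saves the inverse estimate here — $1+d/6-d/2\geq 0$ already holds for $d\leq 3$, as in the semi-discrete case — the extra $h^{\varepsilon}$ in \eqref{def:finaln} is needed elsewhere (e.g., in Lemma~\ref{lem:existence_Euler_approximation} and the CFL-type absorption), but this does not affect the validity of your proof.
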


In the next steps, we derive the equation solved by $\errhn{n+1}$.
To this end, we insert the projected exact solution $\projRitz \soln{n}$ into \eqref{eq:Euler} to obtain for $n \geq 1$
	\begin{align} 
	\ip{	(1+\kappa \ptau \solhn{n}) \ptau^2 \projRitz \soln{n+1}  
		- c^2 \Delta_h \projRitz \soln{n+1} 
		-\beta \Delta_h  \ptau \projRitz \soln{n+1} 
		&+ \ell \nabla \solhn{n} \cdot \nabla \ptau \projRitz \soln{n+1} 
	}{\varphi_h}
	\\
		&=
		\ip{ f_h^{n+1} + \delta_h^{n+1}
	}{\varphi_h} ,
	\end{align}
	with defect $ \delta_h^{n+1}$ given below in \eqref{eq:def_defect_euler}.
	This leads us to the error equation:
	\begin{equation} \label{eq:error_Euler}
	\begin{aligned}
	\ip{	(1+\kappa \ptau \solhn{n}) \ptau^2 \errhn{n+1}  
		- c^2 \Delta_h \errhn{n+1} 
		-\beta \Delta_h  \ptau \errhn{n+1} 
		&+ \ell \nabla \solhn{n} \cdot \nabla \ptau\errhn{n+1}
			}{\varphi_h}
		\\
		&=\ip{ \delta_h^{n+1}
			}{\varphi_h}
		\end{aligned}
	\end{equation}
for $n \geq 1$. In the fully discrete case, we cannot use a Picard--Lindel\"of theorem,
and hence we explicitly have to show
the existence of the approximation $\solhn{n+1}$.
By \eqref{eq:def_full_discrete_error}, it is sufficient to show the unique solvability
of
\eqref{eq:error_Euler} or, in other words, existence of a unique $\errhn{n+1}$. 
By multiplying \eqref{eq:error_Euler} with $\tau^2$
and solving for $\errhn{n+1}$, we rewrite the problem as a linear system of the form
\begin{subequations} \label{eq:recursion_for_erhn}
\begin{align+}
	\ip{  \resolvent{n} \errhn{n+1} }{ \varphi_h}  = \ip{ \widetilde{f}^n }{ \varphi_h} ,
\end{align+}
where
%
	\begin{align+}
		\resolvent{n}  &= (1+\kappa \ptau \solhn{n}) \Id
		-
		c^2 \tau^2  \Delta_h
		- 
		\tau  \beta \Delta_h    
		+
		\tau  \ell \nabla \solhn{n} \cdot \nabla ,
		\\
		\widetilde{f}^n &= 
		2 (1+\kappa \ptau \solhn{n}) \errhn{n}  
		+
		(1+\kappa \ptau \solhn{n}) \errhn{n-1}  
		-
		\tau \beta \Delta_h  \ptau \errhn{n}  
		+
		\tau  \ell \nabla \solhn{n} \cdot \nabla \ptau\errhn{n}
		+
		\tau^2 \delta_h^{n+1},
	\end{align+}
\end{subequations}
and $\Id$ is the identity operator. 
This rewriting enables us to prove the following existence result.

\begin{lemma} \label{lem:existence_Euler_approximation}
	Let the assumptions of Theorem~\ref{thm:ErrorSemiImplicit} hold.
	
	(a) There exists a unique solution $\solhn{2}$
	of \eqref{eq:Euler} for $n=2$.
	
	(b) If the assertions of Proposition~\ref{prop:intermediate_step} hold for 
	up to $n$, then there
	exists a unique solution $\solhn{n+1}$
	of \eqref{eq:Euler}.
	%
\end{lemma}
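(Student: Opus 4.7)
The plan is to reduce both parts of the lemma to showing that the linear operator $\resolvent{n}$ from \eqref{eq:recursion_for_erhn} is injective on the finite-dimensional space $V_h$; bijectivity then follows by the rank–nullity theorem, yielding a unique $\errhn{n+1}$ and hence a unique $\solhn{n+1}$ via \eqref{eq:def_full_discrete_error}. The reformulation of \eqref{eq:Euler} as the linear problem $\ip{\resolvent{n} \errhn{n+1}}{\varphi_h} = \ip{\widetilde{f}^n}{\varphi_h}$ is already carried out in the excerpt, so the real work consists in the injectivity argument under the available bounds on $\solhn{n}$ and the CFL-type condition \eqref{eq:def_CFL_Euler}.

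For part (b), I will assume $\resolvent{n} \psi_h = 0$ for some $\psi_h \in V_h$ and test with $\varphi_h = \psi_h$. Using the induction hypothesis, Lemma~\ref{Lemma:UniformEst_solhn} is applicable and supplies both $1 + \kappa \ptau \solhn{n} \geq \gamma > 0$ and $\|\nabla \solhn{n}\|_{\Linf} \lesssim 1$. Dropping the non-negative $\|\nabla \psi_h\|_{\Ltwo}^2$ contributions on the left-hand side, the identity reduces to
\[
\gamma \, \|\psi_h\|_{\Ltwo}^2 \leq \bigl| \tau \ell \ip{\nabla \solhn{n} \cdot \nabla \psi_h}{\psi_h} \bigr| \leq C \tau \, \|\nabla \psi_h\|_{\Ltwo} \, \|\psi_h\|_{\Ltwo} \leq C \tau h^{-1} \, \|\psi_h\|_{\Ltwo}^2,
\]
where Cauchy–Schwarz and the inverse estimate \eqref{eq:inv_estimate} have been applied. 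The CFL-type condition \eqref{eq:def_CFL_Euler} yields $\tau h^{-1} \lesssim h^{d/6 + 2\varepsilon}$, which can be made smaller than $\gamma/(2C)$ for $h \leq h_0$, forcing $\psi_h = 0$. Injectivity, and therefore unique solvability, follow.

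For part (a), the induction base of Proposition~\ref{prop:intermediate_step} is not yet available, so I will verify the relevant bounds on $\solhn{1}$ directly from the definition \eqref{eq:def_Euler_inits}: since $\ptau \solhn{1} = \projRitz(\soltinit + (\tau/2) w_0)$, the $\Linf$-stability of the Ritz projection (obtained via the nodal interpolant combined with \eqref{eq:projRitz_approx} and \eqref{eq:Ih_approx}) together with the smoothness of $\soltinit$ and $w_0$ gives $\|\nabla \solhn{1}\|_{\Linf} + \|\ptau \solhn{1}\|_{\Linf} \lesssim 1$. Choosing $h, \tau$ small then guarantees $1 + \kappa \ptau \solhn{1} \geq \gamma > 0$, and the injectivity argument for $\resolvent{1}$ applies verbatim.

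The main obstacle is the indefinite convective contribution $\tau \ell \ip{\nabla \solhn{n} \cdot \nabla \psi_h}{\psi_h}$: since the lemma must hold uniformly in $\beta \in [0,\bar\beta]$, one cannot absorb it into the $\tau \beta \|\nabla \psi_h\|_{\Ltwo}^2$ term, and a direct Young's inequality against the $c^2 \tau^2 \|\nabla \psi_h\|_{\Ltwo}^2$ term would leave a residue in $\|\psi_h\|_{\Ltwo}^2$ that is not controlled by $\gamma$ without a smallness assumption on $\|\nabla \solhn{n}\|_{\Linf}$. Trading the surplus factor $\|\nabla \psi_h\|_{\Ltwo}$ for $h^{-1} \|\psi_h\|_{\Ltwo}$ via the inverse estimate is therefore essential, and it is precisely at this point that the CFL-type condition \eqref{eq:def_CFL_Euler} is indispensable for the first time in the analysis.
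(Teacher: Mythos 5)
Your proof is correct, and the skeleton (finite dimensionality, so injectivity of $\resolvent{n}$ suffices; test with the kernel element; use the non-degeneracy bound $1+\kappa\ptau\solhn{n}\geq\gamma$ from Lemma~\ref{Lemma:UniformEst_solhn}; control the convective term) coincides with the paper's. The one genuine divergence is in how the critical term $\tau\ell\,\ip{\nabla\solhn{n}\cdot\nabla\psi_h}{\psi_h}$ is handled. You bound it crudely by $\|\nabla\solhn{n}\|_{\Linf}\norm{\nabla\psi_h}_{\Ltwo}\norm{\psi_h}_{\Ltwo}\lesssim \tau h^{-1}\norm{\psi_h}_{\Ltwo}^2$ and let the CFL-type condition \eqref{eq:def_CFL_Euler} kill the factor $\tau h^{-1}\lesssim h^{d/6+2\varepsilon}$; this is valid (the $W^{1,\infty}$ bound on $\solhn{n}$ is available under the induction hypothesis, and for part (a) your direct verification from \eqref{eq:def_Euler_inits} is exactly what "along the same lines" hides in the paper), and it even yields the quantitative coercivity \eqref{eq:Rn_inv_bound} with constant $\gamma/2$ for $h\leq h_0$, which is what is actually reused in Lemma~\ref{lem:errhn2}. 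The paper instead inserts the expansion \eqref{eq:expansion_critical_term}: the smooth part is integrated by parts to produce $\Delta\soln{n}$ (no inverse estimate, no loss of $h^{-1}$), the Ritz-projection part contributes $h^{k-1}$, and only the discrete-error part pays an inverse estimate, compensated by the induction $C_0$ bound on $\Delta_h\errhn{n}$. What that finer splitting buys is reusability: the same expansion is invoked again in Proposition~\ref{prop:pt2_errh_Euler} and in the proof of Theorem~\ref{thm:FullyDiscrete_beta_limit}, where the test function is $\ptau^2\errhn{j+1}$ and a blanket inverse estimate on its gradient could not be absorbed. For the existence statement of this lemma alone, your shorter argument is fully sufficient.
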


\begin{proof}
	Since we consider a finite dimensional solution space, it is sufficient to show injectivity of $\resolvent{n}$.
	We only present the proof of part (b)
	as part (a) can be proven along the same lines.
	For $\varphi_h \in \Vh$, by using the lower bound in \eqref{gamma_bound_Euler}, we compute
	\begin{align}
		&\, \ip{\resolvent{n} \varphi_h}{\varphi_h }
		\\
		= &\,  \ip{(1+\kappa \ptau \solhn{n}) \varphi_h  }{ \varphi_h }
		+
		\bigl( \tau^2 c^2 + \tau \beta \bigr) \norm{ \nabla \varphi_h }_{\Ltwo}^2
		+
		\tau  \ell
		\ip{ \nabla \solhn{n} \cdot \nabla \varphi_h }{\varphi_h }
		\\
		\geq
		&\, \gamma \norm{ \varphi_h }_{\Ltwo}^2
		+
		\bigl( \tau^2 c^2 + \tau \beta \bigr) \norm{ \nabla \varphi_h}_{\Ltwo}^2
		-
		\tau \ell
		| \ip{ \nabla \solhn{n} \cdot \nabla \varphi_h }{ \varphi_h } | \,,
	\end{align}
	and thus it holds
	\[
	\begin{aligned}
		\gamma \norm{ \varphi_h }_{\Ltwo}^2
		+
		\bigl( \tau^2 c^2 + \tau \beta \bigr) \norm{ \nabla \varphi_h}_{\Ltwo}^2 \leq \ip{\resolvent{n} \varphi_h}{\varphi_h }+	\tau \ell
		| \ip{ \nabla \solhn{n} \cdot \nabla \varphi_h }{ \varphi_h } |.
	\end{aligned}
	\]
	We expand the last term
	and rely on inverse estimates \eqref{eq:inv_estimate} and the discrete embedding
	\eqref{eq:discrete_Sobolev_embedding} to obtain
	\begin{equation} \label{eq:expansion_critical_term}
		\begin{aligned} 
			&\, \tau | \ip{ \nabla \solhn{n} \cdot \nabla \varphi_h }{ \varphi_h }  |
			\\
			=
			&\, \tau  | -\ip{ \Delta \soln{n} \varphi_h }{ \varphi_h }
			+
			\ip{ \nabla \bigl( \projRitz \soln{n} -  \soln{n}) \nabla  \varphi_h }{ \varphi_h }
			-
			\ip{\nabla \errhn{n}  \nabla \varphi_h }{ \varphi_h } |
			%
			\\
			\leq 
			&\, C \tau
			\bigl( 1 + h^{k-1} \bigr)  \norm{ \varphi_h }_{\Ltwo}^2 
			+
			\tau  h^{-1-d/6} \norm{ \Delta_h \errhn{n} }_{\Ltwo}    \norm{ \varphi_h }_{\Ltwo}^2. 
		\end{aligned}
	\end{equation}
	We absorb the first term for $\tau$ sufficiently small,
	and estimate the second term with
	the $C_0$ bound in \eqref{def:finaln} and
	 the {CFL-type condition} \eqref{eq:def_CFL_Euler} 
	\begin{align}
		\tau h^{-1-d/6} \norm{ \Delta_h \errhn{n} }_{\Ltwo} &\leq
		\tau^{1/2} h^{-1-d/6}  \bigl( \tau \sum_{j=1}^n \norm{ \Delta_h \errhn{j}}_{\Ltwo}^2 \bigr)^{1/2} \leq C_0 h^{\varepsilon} \tau^{1/2} .
	\end{align}
	Hence, this term can also be absorbed, such that we obtain for some $\alpha >0$ 
	\begin{equation} \label{eq:Rn_inv_bound}
		\norm{\varphi_h}_{\Ltwo}^2 + \bigl( \tau^2 c^2 + \tau \beta \bigr) \norm{ \nabla \varphi_h }_{\Ltwo}^2 \leq \alpha \ip{\resolvent{n} \varphi_h}{\varphi_h }, 
	\end{equation}
	where $\alpha$ is independent of $h$, $\tau$, $n$, and $\beta$.
\end{proof}

The following lemma provides an estimate of the defect. Again here, we state it in its full generality to be used not only for proving the induction basis $n=2$ but also later for completing the induction step.
	\begin{lemma} \label{Lemma:Bound_defect_Euler}
	Let the assumptions of Theorem~\ref{thm:ErrorSemiImplicit} hold. 
	If the assertions of Proposition~\ref{prop:intermediate_step} hold for 
	up to $n$, then
		\begin{align} 
		\tau \sum\limits_{j=1}^{n}	\norm{\delta_h^{j+1}}_{\Ltwo}^2 
			&\leq\,
			 C(u,f) \bigl(  h^k+ 	\tau \bigr)^2
			 +
			\tau \sum\limits_{j=1}^n 
			\bigl(
			\norm{\ptau \errhn{j}}_{\Ltwo}^2 + \norm{\nabla \errhn{j}}_{\Ltwo}^2
			\bigr) ,
		\end{align}
		and 
		%
		\begin{align} 
		\tau \sum\limits_{j=1}^{n} \norm{\ptau \delta_h^{j+1}}_{\Ltwo}^2 
		&\leq\,
		C(u,f) \bigl(  h^k+ 	\tau \bigr)^2
		\\
		&+
		\tau \sum\limits_{j=1}^{n}
		\bigl(
		\norm{\ptau \errhn{j}}_{\Ltwo}^2 
		+ 
		\norm{\ptau^2 \errhn{j}}_{\Ltwo}^2
		+
		\norm{ \nabla \errhn{j}}_{\Ltwo}^2 
		+ 
		\norm{\ptau \nabla \errhn{j}}_{\Ltwo}^2
		\bigr)
		\end{align}
	with constants independent of $h$, $\tau$, $n$, and $\beta$.
	\end{lemma}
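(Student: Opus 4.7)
The plan is to expand $\delta_h^{n+1}$ explicitly by subtracting the discrete scheme \eqref{eq:Euler} evaluated at $\projRitz \soln{n+1}$ from the continuous Kuznetsov equation tested with $\varphi_h$ at time $\tn{n+1}$. This produces four families of residuals: (i) time-discretization residuals of the form $\ptau^2 \projRitz \soln{n+1} - \pt^2 \soln{n+1}$ and $\ptau \projRitz \soln{n+1} - \pt \soln{n+1}$; (ii) a Ritz-projection residual $\Deltah \projRitz \soln{n+1} - \Delta \soln{n+1}$; (iii) the quasilinear and gradient-product mismatches where $\solhn{n}$ replaces $\soln{n+1}$ and discrete derivatives replace continuous ones; and (iv) the source residual $f^{n+1} - f_h^{n+1}$. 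My strategy is to bound the first, second, and fourth families directly in terms of $\tau + h^k$ using the regularity assumption $\sol \in \calU$, Lemma~\ref{lem:diff_ptau_pt}, the Ritz estimate \eqref{eq:projRitz_approx}, and \eqref{assumption_accuracy_fh_full}, while the third family will contribute a "bounded-data times error" part that feeds into the right-hand side.

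For (i), I would split each time residual into a projection part and a time-stepping part by inserting $\pm\,\pt^\ell \projRitz \soln{n+1}$: the projection parts are controlled by \eqref{eq:projRitz_approx} applied to $\pt^\ell u$, and the time-stepping parts $\ptau^m \pt^\ell u(\tn{n+1}) - \pt^{m+\ell} u(\tn{n+1})$ are controlled via Lemma~\ref{lem:diff_ptau_pt} with the regularity $\sol \in H^3(H^{k+1}) \cap H^4(\Ltwo) \cap W^{3,\infty}(H^2)$ built into $\calU$. For (iii), I would insert $\pm\,\nabla \projRitz \soln{n} \cdot \nabla \ptau \projRitz \soln{n+1}$ and then $\pm\,\nabla \soln{n} \cdot \nabla \pt \soln{n+1}$, so that the difference decomposes as an "exact-solution" piece controllable by $\tau + h^k$, plus a term involving $\nabla \errhn{n}$ multiplied by $\nabla \ptau \projRitz \soln{n+1}$ which is bounded in $\Linf$ by the regularity of $\sol$ and stability of $\projRitz$. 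The quasilinear factor $(1+\kappa\ptau \solhn{n})\ptau^2 \projRitz \soln{n+1}$ is handled analogously, producing a term proportional to $\ptau \errhn{n}$.

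For the second bound, I would apply $\ptau$ to the expression for $\delta_h^{n+1}$ using the discrete product rules \eqref{eq:product_rule_tau} and then repeat the decomposition strategy. Each time-residual is raised by one order, so Lemma~\ref{lem:diff_ptau_pt} with $m=2$ gives $\tau$-bounds consuming the full regularity $u\in H^4(\Ltwo)\cap W^{3,\infty}(H^2)$ and $f_h\in C^1(\Ltwo)$; the nonlinear pieces now produce additional contributions $\ptau \nabla \errhn{n}$ and $\ptau^2 \errhn{n}$ on the right-hand side, which is precisely the structure claimed. Summing $\tau \sum_{j=1}^n$ over both bounds and using the assumption $\sol \in \calU$ together with \eqref{assumption_accuracy_fh_full} completes the estimate.

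The main obstacle is bookkeeping: keeping the algebra of the $\ptau \delta_h^{n+1}$ expansion organized so that every factor multiplying an $\errhn{j}$-type quantity is either a bounded function of $\sol$ (via $\calU$-regularity and Sobolev embeddings) or a Ritz-projection image of such (via \eqref{eq:projRitz_approx}), never a quantity requiring an inverse estimate. A secondary point is to ensure that the quasilinearity at node $n+1$, namely $(1+\kappa \ptau \solhn{n})$, is bounded uniformly in $\Linf$; this is provided by Lemma~\ref{Lemma:UniformEst_solhn} as soon as the induction hypothesis \eqref{def:finaln} is in force, which is why the lemma is stated conditionally on the assertions of Proposition~\ref{prop:intermediate_step} holding up to index $n$.
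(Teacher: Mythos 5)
Your proposal is correct and follows essentially the same route as the paper: write out $\delta_h^{n+1}$ explicitly by inserting $\projRitz\soln{n+1}$ into the scheme, split each residual into a Ritz-projection part (controlled by \eqref{eq:projRitz_approx}) and a time-stepping part (controlled by Lemma~\ref{lem:diff_ptau_pt}), isolate the $\ptau\errhn{n}$- and $\nabla\errhn{n}$-dependent terms with $\Linf$-bounded coefficients, and repeat with the discrete product rule for $\ptau\delta_h^{n+1}$. One caution on your family (ii): the residual $c^2\bigl(\Deltah\projRitz\soln{n+1}-\Delta\soln{n+1}\bigr)$ cannot be bounded by $h^k$ in $\Ltwo$ via \eqref{eq:projRitz_approx} (that would cost an order, since $\Deltah\projRitz\soln{n+1}=\projLtwo\Delta\soln{n+1}$ only yields $O(h^{k-1})$); instead this term is $L^2$-orthogonal to $\Vh$ by the definition of the Ritz projection and hence drops out of the defect entirely, which is why it does not appear in \eqref{eq:def_defect_euler}.
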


\begin{proof}
It is straightforward to check that the defect in \eqref{eq:error_Euler} is given by
		%
%
		\begin{equation} \label{eq:def_defect_euler}
	\begin{aligned} 
		\delta_h^{n+1} 	
		&= 	(1+\kappa \ptau  \projRitz \soln{n} ) \ptau^2 \projRitz \soln{n+1}  
			-
			(1+\kappa \pt \soln{n+1}) \pt^2 \soln{n+1}  
			-
			\kappa \ptau \errhn{n} \ptau^2 \projRitz \soln{n+1} 
		\\
		&+
		 \beta \bigl( 
			\Delta  \pt \soln{n+1} 
			-
			\Delta \ptau  \soln{n+1} 
			\bigr)
			+
			f(\tn{n+1}) - f_h^{n+1} 
		\\
		&+ 
		\ell \nabla \projRitz \soln{n}  \cdot \nabla \ptau \projRitz \soln{n+1} 
			-
			\ell \nabla \soln{n+1} \cdot \nabla \pt \soln{n+1}
			-
			\ell \nabla \errhn{n} \cdot \nabla \ptau \projRitz \soln{n+1} .	\end{aligned}	
\end{equation}
	Most of the terms were already estimated in Lemma~\ref{Lemma:Bound_defect}.
	Using also the estimates provided in Lemma~\ref{lem:diff_ptau_pt}
%
and 
	the approximation properties of $f_h^n$ assumed in \eqref{assumption_accuracy_fh_full}, we obtain the first bound.
	Using the product rule \eqref{eq:product_rule_tau}
	several times, by the same strategy, we arrive at the second bound for 
	$\ptau \delta_h^{n+1}$.
\end{proof}

With this preparation, we can show that estimates \eqref{eq:bounds_intermediate} and \eqref{def:finaln} hold for $n=2$ and thus complete the induction basis. 

\begin{lemma} \label{lem:errhn2}
	Let the assumptions of Theorem~\ref{thm:ErrorSemiImplicit} hold,
	and let the initial values be defined by \eqref{eq:def_Euler_inits}.
	Then, the error $\errhn{2} $ satisfies the following bounds
 	\begin{equation} \label{est:uhtwo}
		\begin{aligned}	
		\norm{\ptau^2 \errhn{2}}_{\Ltwo}
			+
			\norm{\ptau \nabla \errhn{2}}_{\Ltwo}
			+
			\norm{\Delta_h \errhn{2}}_{\Ltwo}
			&\lesssim
			\tau + h^k  \,,
		\end{aligned}
		\end{equation}
	with a constant independent of $h$, $\tau$, and $\beta$.
	\end{lemma}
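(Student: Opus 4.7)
The plan is to test the error equation \eqref{eq:error_Euler} at $n=1$ with two different test functions and combine the resulting bounds. Existence and uniqueness of $\solhn{2}$ (and hence of $\errhn{2}$) follow from Lemma~\ref{lem:existence_Euler_approximation}(a). Since $\errhn{0}=0$, the definition of $\ptau$ yields the algebraic identities
\[
\errhn{2} = 2\errhn{1} + \tau^2 \ptau^2 \errhn{2}, \qquad \ptau \errhn{2} = \ptau \errhn{1} + \tau \ptau^2 \errhn{2},
\]
and Lemma~\ref{lem:errhn1} controls every $\errhn{1}$-dependent piece by $\tau(\tau+h^k)$ or better, so they act as higher-order perturbations throughout.

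First, I would test \eqref{eq:error_Euler} at $n=1$ with $\varphi_h = \ptau^2 \errhn{2}$. The leading term yields $\gamma \norm{\ptau^2 \errhn{2}}_{\Ltwo}^2$ via the lower bound \eqref{gamma_bound_Euler} of Lemma~\ref{Lemma:UniformEst_solhn}. After substituting the identities above, the two Laplacian terms contribute $(c^2 \tau^2 + \beta \tau)\norm{\nabla \ptau^2 \errhn{2}}_{\Ltwo}^2$ with the favorable sign, plus cross terms in $\nabla \errhn{1}$ and $\Delta_h \errhn{1}$ that are absorbed using Lemma~\ref{lem:errhn1}. The quadratic gradient term $\ell \ip{\nabla \solhn{1} \cdot \nabla \ptau \errhn{2}}{\ptau^2 \errhn{2}}$ is handled exactly as in Proposition~\ref{prop:fe_FirstEstimate}: splitting $\nabla \solhn{1} = \nabla \soln{1} - \nabla(\soln{1} - \projRitz \soln{1}) - \nabla \errhn{1}$, applying the identity $\ip{\nabla \soln{1} \cdot \nabla \psi}{\psi} = -\tfrac{1}{2}\ip{\Delta \soln{1}\,\psi}{\psi}$ on the smooth piece, and controlling the $\errhn{1}$ contribution via inverse estimates together with the CFL-type condition \eqref{eq:def_CFL_Euler}, which forces $\tau h^{-1-d/6} \lesssim h^{2\varepsilon}$ to be small. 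The defect obeys the pointwise bound $\norm{\delta_h^2}_{\Ltwo} \lesssim \tau + h^k$, which I would derive directly from \eqref{eq:def_defect_euler} along the lines of Lemma~\ref{Lemma:Bound_defect_Euler}, again invoking Lemma~\ref{lem:errhn1}. The combined outcome is $\norm{\ptau^2 \errhn{2}}_{\Ltwo}^2 + (c^2 \tau^2 + \beta \tau)\norm{\nabla \ptau^2 \errhn{2}}_{\Ltwo}^2 \lesssim (\tau + h^k)^2$.

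To obtain the pointwise bound on $\norm{\Delta_h \errhn{2}}_{\Ltwo}$ (which carries no $\sqrt{\tau}$ weight), I would next test \eqref{eq:error_Euler} at $n=1$ with $\varphi_h = -\Delta_h \errhn{2}$. This produces $c^2 \norm{\Delta_h \errhn{2}}_{\Ltwo}^2$ on the left-hand side, together with a favorable $(\beta/\tau)\norm{\Delta_h \errhn{2}}_{\Ltwo}^2$ once one expands $\Delta_h \ptau \errhn{2} = (\Delta_h \errhn{2} - \Delta_h \errhn{1})/\tau$. Estimating the remaining terms by Cauchy--Schwarz and Young's inequality, using the $\ptau^2 \errhn{2}$ bound just established and the $W^{1,\infty}$ bound on $\solhn{1}$ from Lemma~\ref{Lemma:UniformEst_solhn}, and absorbing, yields $\norm{\Delta_h \errhn{2}}_{\Ltwo} \lesssim \tau + h^k$. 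The remaining bound on $\norm{\nabla \ptau \errhn{2}}_{\Ltwo}$ then follows from $\nabla \ptau \errhn{2} = \nabla \ptau \errhn{1} + \tau \nabla \ptau^2 \errhn{2}$, the inverse estimate $\norm{\nabla \ptau^2 \errhn{2}}_{\Ltwo} \lesssim h^{-1} \norm{\ptau^2 \errhn{2}}_{\Ltwo}$, Lemma~\ref{lem:errhn1}, and once more the CFL condition.

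The principal obstacle is the quadratic gradient term: it must be treated without spoiling $\beta$-uniformity and without being able to rely on a summation-by-parts energy structure (only one time step is in play). This is precisely why the identity-plus-inverse-estimate device from Proposition~\ref{prop:fe_FirstEstimate} must be imported into the fully discrete setting here, with the extra smallness $\norm{\errhn{1}}_{W^{1,6}} \lesssim \tau^2(\tau+h^k)$ provided by Lemma~\ref{lem:errhn1} absorbing the factor $\tau h^{-1-d/6}$ through the CFL condition. This role of $\errhn{1}$ as a higher-order perturbation is what replaces, in the induction basis, the telescoping structure that will drive the induction step in Section~\ref{subsec:induction_step}.
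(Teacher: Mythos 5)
Your proposal is correct, but it takes a genuinely different route from the paper. The paper's proof is much shorter: it reuses the coercivity bound \eqref{eq:Rn_inv_bound} for the operator $\resolvent{1}$ (already established in the existence Lemma~\ref{lem:existence_Euler_approximation}) to get $\norm{\errhn{2}}_{\Ltwo} + \tau\norm{\nabla\errhn{2}}_{\Ltwo} \lesssim \norm{\resolvent{1}\errhn{2}}_{\Ltwo} = \norm{\widetilde{f}^1}_{\Ltwo}$, then observes that every term of $\widetilde{f}^1$ in \eqref{eq:recursion_for_erhn} carries the extra powers of $\tau$ supplied by Lemma~\ref{lem:errhn1}, so that $\norm{\errhn{2}}_{\Ltwo} + \tau\norm{\nabla\errhn{2}}_{\Ltwo} \lesssim \tau^2(\tau+h^k)$. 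The three claimed bounds then follow by crude triangle-inequality division by $\tau^2$ and $\tau$, and — for $\Delta_h\errhn{2}$ — by the inverse estimate $h^{-2}$ combined with the CFL-type condition $\tau\lesssim h^{1+d/6+2\varepsilon}\leq h$. No energy testing and no treatment of the quadratic gradient term are needed at all in the base case. Your approach instead runs a genuine single-step energy argument (testing with $\ptau^2\errhn{2}$ and then with $-\Delta_h\errhn{2}$), importing the $\frac12\nabla(\cdot)^2$ identity from Proposition~\ref{prop:fe_FirstEstimate}; this works — the cross terms in $\errhn{1}$ are indeed absorbable thanks to Lemma~\ref{lem:errhn1}, and the pointwise defect bound $\norm{\delta_h^2}_{\Ltwo}\lesssim\tau+h^k$ is available from the $H^{m+2}$-variant of Lemma~\ref{lem:diff_ptau_pt} — but it is considerably more laborious. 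What your version buys is robustness: it mirrors the structure of the induction step and would survive even if the starting values were only accurate to $O(\tau(\tau+h^k))$ rather than $O(\tau^2(\tau+h^k))$, whereas the paper's argument hinges precisely on those two spare powers of $\tau$ surviving the division by $\tau^2$. What the paper's version buys is brevity and the avoidance of a second, base-case-specific instance of the delicate nonlinear-term estimate.
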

	
	\begin{proof}
	We employ the estimate \eqref{eq:Rn_inv_bound}  in Lemma~\ref{lem:existence_Euler_approximation} and 
	use $n=1$ in \eqref{eq:recursion_for_erhn} to obtain 
		\begin{align}
	\norm{\errhn{2}}_{\Ltwo} + \tau \norm{\nabla \errhn{2}}_{\Ltwo}  
	&\lesssim \norm{\resolvent{1} \errhn{2}}_{\Ltwo}
	\\
		&\lesssim \begin{multlined}[t]
		\norm{ \errhn{1}  }_{\Ltwo} 
		+
		\norm{ \errhn{0}	}_{\Ltwo} 
		+
		\tau \beta	\norm{		 \Delta_h  \ptau \errhn{1}  }_{\Ltwo} 
		\\
		+
		\tau   	\norm{ \nabla \ptau\errhn{1} }_{\Ltwo} 
		+
		\tau^2 \norm{	 \delta_h^{2}	}_{\Ltwo}  \end{multlined}
		\\
		&\lesssim \tau^2 \bigl( \tau + h^k) ,
	\end{align}
	where we have used $\errhn{0} = 0$, Lemma~\ref{lem:errhn1},
	and for estimating the defect, Lemma~\ref{lem:diff_ptau_pt} with $0\leq m\leq 2$.
%
The first two terms can then  be bounded using
\begin{align}
	\norm{\ptau \nabla \errhn{2}}_{\Ltwo} 
	&\leq \tau^{-1}
	\bigl(
	\norm{ \nabla\errhn{2}}_{\Ltwo}
	+
	\norm{ \nabla \errhn{1}}_{\Ltwo}
	\bigr), 
	\\
	\norm{\ptau^2 \errhn{2}}_{\Ltwo} 
	&\leq \tau^{-2}
	\bigl(
	\norm{ \errhn{2}}_{\Ltwo}
	+
	2 \norm{ \errhn{1}}_{\Ltwo}
	+
	\norm{ \errhn{0}}_{\Ltwo}
	\bigr).
\end{align}
Using the inverse estimate \eqref{eq:inv_estimate} and the CFL-type condition \eqref{eq:def_CFL_Euler},
we additionally have
\begin{equation}
\norm{\Delta_h \errhn{2}}_{\Ltwo}
 \lesssim 
 h^{-2} \norm{\errhn{2}}_{\Ltwo}
 \lesssim
  \tau^{-2} \norm{ \errhn{2}}_{\Ltwo} ,
\end{equation}
and conclude the desired bound.
\end{proof}

We thus conclude that estimate \eqref{eq:bounds_intermediate} holds for $n=2$. Estimates \eqref{def:finaln} for $n=2$ then directly follow by exploiting the CFL-type condition in \eqref{eq:def_CFL_Euler}. Altogether, we conclude that the statement of Proposition~\ref{prop:intermediate_step} holds for $n=2$.
\subsection{Completing the induction step}
\label{subsec:induction_step}

%
We next perform the induction step needed to prove estimates \eqref{eq:bounds_intermediate} and \eqref{def:finaln}. 
Note that by Lemma~\ref{lem:existence_Euler_approximation},
we have already shown the existence of $\solhn{n+1}$.
To prove \eqref{eq:bounds_intermediate}, we proceed similarly to
Propositions~\ref{prop:fe_FirstEstimate}
and \ref{prop:kuznetsov_space_main_v2}
in two testing steps. First, we test the equation for $\errhn{j+1}$ with $-\Delta_h \errhn{j+1}$
in
Proposition~\ref{prop:Deltah_errh_Euler},
and then test the discretely differentiated version with 
$\ptau^2 \errhn{j+1}$
in Proposition~\ref{prop:pt2_errh_Euler}.

	\begin{myproposition} \label{prop:Deltah_errh_Euler}
	Let the assumptions of Theorem~\ref{thm:ErrorSemiImplicit} hold.
	If the assertions of Proposition~\ref{prop:intermediate_step} hold 
	up to $n$, then
	%
	\begin{equation} \label{est:_errh}
	\begin{aligned}
	&\tau \sum\limits_{j=1}^n
	\norm{\Delta_h \errhn{j+1} }^2_{\Ltwo}
	+
	\beta \norm{\Delta_h \errhn{n+1} }^2_{\Ltwo} 
	\\
	&\lesssim
	\beta \norm{\Delta_h \errhn{1} }^2_{\Ltwo}
	+
	\tau  \sum\limits_{j=1}^n \bigl(
	\norm{\ptau^2 \errhn{j+1}  }^2_{\Ltwo}
	+
	\norm{\nabla \ptau\errhn{j+1}  }^2_{\Ltwo}
	+
	\norm{\delta_h^{j+1} }^2_{\Ltwo}\bigr),
\end{aligned}
\end{equation}
with constants independent of $h$, $\tau$, $n$ and $\beta$.
	\end{myproposition}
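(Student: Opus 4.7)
The plan is to mimic the testing procedure of Proposition~\ref{prop:kuznetsov_space_main_v2} in the fully discrete setting. Specifically, I would test the error equation \eqref{eq:error_Euler} with $\varphi_h = -\Delta_h \errhn{j+1}$, then multiply by $\tau$ and sum over $j = 1, \ldots, n$. Rearranging, this yields for each $j$ the identity
\begin{equation*}
\begin{aligned}
c^2 \norm{\Delta_h \errhn{j+1}}_{\Ltwo}^2
+ \beta \ip{\Delta_h \ptau \errhn{j+1}}{\Delta_h \errhn{j+1}}
&= \ip{(1+\kappa \ptau \solhn{j}) \ptau^2 \errhn{j+1}}{\Delta_h \errhn{j+1}} \\
&\quad + \ell \ip{\nabla \solhn{j} \cdot \nabla \ptau \errhn{j+1}}{\Delta_h \errhn{j+1}}
- \ip{\delta_h^{j+1}}{\Delta_h \errhn{j+1}}.
\end{aligned}
\end{equation*}

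For the $\beta$ term, after summation in $j$, I would invoke the discrete fundamental theorem of calculus \eqref{eq:ftc_Euler_simple} applied to $a^j = \Delta_h \errhn{j}$ to obtain
\begin{equation*}
\tau \sum\limits_{j=1}^n \ip{\Delta_h \ptau \errhn{j+1}}{\Delta_h \errhn{j+1}} \geq \tfrac{1}{2}\norm{\Delta_h \errhn{n+1}}_{\Ltwo}^2 - \tfrac{1}{2}\norm{\Delta_h \errhn{1}}_{\Ltwo}^2,
\end{equation*}
which produces both the $\beta \norm{\Delta_h \errhn{n+1}}^2$ on the left and the $\beta \norm{\Delta_h \errhn{1}}^2$ on the right of \eqref{est:_errh}.

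For the right-hand side terms, I would apply Cauchy--Schwarz followed by Young's inequality. The semi-implicit structure is crucial here: because the nonlinear coefficient is $\nabla \solhn{j}$ (evaluated at the previous time level), I can use the uniform $L^\infty$ bound on $\nabla \solhn{j}$ supplied by Lemma~\ref{Lemma:UniformEst_solhn} to estimate
\begin{equation*}
|\ell \ip{\nabla \solhn{j} \cdot \nabla \ptau \errhn{j+1}}{\Delta_h \errhn{j+1}}| \lesssim \norm{\nabla \ptau \errhn{j+1}}_{\Ltwo}\norm{\Delta_h \errhn{j+1}}_{\Ltwo},
\end{equation*}
and similarly, using \eqref{gamma_bound_Euler},
\begin{equation*}
|\ip{(1+\kappa \ptau \solhn{j}) \ptau^2 \errhn{j+1}}{\Delta_h \errhn{j+1}}| \lesssim \norm{\ptau^2 \errhn{j+1}}_{\Ltwo}\norm{\Delta_h \errhn{j+1}}_{\Ltwo}.
\end{equation*}
After Young's inequality (with a weight chosen to leave $c^2/2$), I would absorb $\tfrac{c^2}{2}\tau \sum_j \norm{\Delta_h \errhn{j+1}}^2$ into the left-hand side, yielding \eqref{est:_errh}.

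I expect no major obstacle in this step: unlike the continuous/semi-discrete setting (or the companion Proposition~\ref{prop:pt2_errh_Euler} that tests with $\ptau^2 \errhn{j+1}$), testing with $-\Delta_h \errhn{j+1}$ involves no discrete time derivative of the nonlinearity, so no inverse estimates or smallness arguments from \eqref{def:finaln} are needed here. The induction hypothesis is used only indirectly, through Lemma~\ref{Lemma:UniformEst_solhn}, to guarantee the $L^\infty$ bound on $\nabla \solhn{j}$ and the non-degeneracy \eqref{gamma_bound_Euler}. The harder estimate involving $\ptau^2 \errhn{j+1}$, where inverse estimates and the {CFL-type condition} \eqref{eq:def_CFL_Euler} enter nontrivially, is deferred to Proposition~\ref{prop:pt2_errh_Euler}; here the two estimates will be combined, as in the semi-discrete case, to close the argument.
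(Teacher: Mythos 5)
Your proposal is correct and follows essentially the same route as the paper: test \eqref{eq:error_Euler} with $-\Delta_h \errhn{j+1}$, sum, handle the $\beta$ term with the discrete fundamental theorem of calculus, bound the remaining terms via the uniform bounds of Lemma~\ref{Lemma:UniformEst_solhn} and Young's inequality, and absorb the $\Delta_h\errhn{j+1}$ contribution into the left-hand side. Your observation that the semi-implicit evaluation of the coefficients at level $j$ spares any inverse estimate or smallness argument in this step is exactly right (only note that what you need from the coefficient $1+\kappa\ptau\solhn{j}$ here is its uniform upper bound from Lemma~\ref{Lemma:UniformEst_solhn}, not the lower bound \eqref{gamma_bound_Euler}).
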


	\begin{proof}
	As announced, we test the error equation for $\errhn{j+1}$ with $\phih = - \Deltah \errhn{j+1}$ to obtain
		%
		\begin{align}
			&c^2 \norm{\Delta_h \errhn{j+1} }^2_{\Ltwo}
			+
			\beta \ip{\Delta_h  \ptau \errhn{j+1} }{\Delta_h \errhn{j+1} }
			\\
			&=
			\ip{ (1+\kappa \ptau \solhn{j}) \ptau^2 \errhn{j+1}  
			}{\Delta_h \errhn{j+1} }
		+ \ip{  \ell \nabla \solhn{j} \cdot \nabla \ptau\errhn{j+1} }
		{\Delta_h \errhn{j+1} }
			\\
			&\quad + \ip{\delta_h^{j+1}}{\Delta_h \errhn{j+1} }.
		\end{align}
	Note that since the assertions of Proposition~\ref{prop:intermediate_step} hold up to $n$, we can rely on the uniform bounds stated in Lemma~\ref{Lemma:UniformEst_solhn}. Thus, summing from $1$ to $n$, using Lemma~\ref{lem:ftc_Euler}
	as well as Young's inequality and the uniform bounds in Lemma~\ref{Lemma:UniformEst_solhn},
	leads to estimate \eqref{est:_errh}.
	%
%
\end{proof}

We next need a discretely differentiated version of 
	the error equation \eqref{eq:error_Euler},
	analogously to \eqref{eq:Kuznetsov_space_discr_no_kappa_pt}. We use
	the discrete product rule \eqref{eq:product_rule_tau} to obtain 
	\begin{align} \label{eq:error_Euler_ptau}
	&\, 	\ip{	(1+\kappa \ptau \solhn{n}) \ptau^3 \errhn{n+1}  
	+
	\kappa \ptau^2 \solhn{n}  \ptau^2 \errhn{n}  
	 - c^2 \Delta_h \ptau \errhn{n+1} 
	-\beta \Delta_h  \ptau^2 \errhn{n+1} 
	\\
	& \qquad \qquad
	+ \ell \nabla \solhn{n} \cdot \nabla \ptau^2\errhn{n+1}
	+ \ell \nabla \ptau \solhn{n} \cdot \nabla \ptau \errhn{n}
}{\phih}
	= 
	\ip{ \ptau\delta_h^{n+1} 
}{\phih},
\end{align}	
for $n\geq 2$. Further, we introduce the notation
\begin{equation}
	\normlinfLtwo{a_h^j}{1}{n} \coloneqq  \max\limits_{j=1,\ldots,n} \norm{a_h^j}_{\Ltwo} ,
\end{equation}
which allows us to formulate the next proposition.
	
	\begin{myproposition}  \label{prop:pt2_errh_Euler} 
		Let the assumptions of Theorem~\ref{thm:ErrorSemiImplicit} hold.
		If the assertions of Proposition~\ref{prop:intermediate_step} hold for up to $n$, then
		for 
		 any $\alpha >0$ 
		it holds 
		\begin{equation} \label{eq:est_pt2_errh_Euler}
		\begin{aligned}
			&\, \norm{\ptau^2 \errhn{n+1}}_{\Ltwo}^2
			+
			\norm{\ptau \nabla \errhn{n+1}}_{\Ltwo}^2
			+
			\tau \beta \sum\limits_{j=2}^n
			 \norm{\nabla \ptau^2 \errhn{j+1} }^2_{\Ltwo} 
			\\
			&\lesssim \begin{multlined}[t]
			 \alpha
		 	\, \normlinfLtwo{\ptau^2 \errhn{j}}{2}{n+1}^2
+
			\norm{\ptau^2 \errhn{2}}_{\Ltwo}^2
+
\norm{\ptau \nabla \errhn{2}}_{\Ltwo}^2
			\\
			 +
			\tau 	\sum\limits_{j=2}^{n} \bigl(
			\norm{\ptau^2 \errhn{j+1}  }^2_{\Ltwo}
			+
			\norm{\nabla \ptau\errhn{j+1}  }^2_{\Ltwo}
			+
			\norm{\ptau \delta_h^{j+1} }^2_{\Ltwo} \bigr), \end{multlined}
			\end{aligned}
		\end{equation}
	with constants independent of $h$, $\tau$, $n$ and $\beta$.
	\end{myproposition}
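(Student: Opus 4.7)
The strategy mirrors the semi-discrete argument in Proposition~\ref{prop:fe_FirstEstimate}: I would test the time-differenced error equation \eqref{eq:error_Euler_ptau} with $\phih = \ptau^2 \errhn{j+1}$ and sum over $j = 2, \ldots, n$. Multiplying by $\tau$ and summing, the leading term $\ip{(1+\kappa \ptau \solhn{j}) \ptau^3 \errhn{j+1}}{\ptau^2 \errhn{j+1}}$ is handled via the weighted discrete fundamental theorem of calculus in Lemma~\ref{lem:ftc_Euler} with $\omega_j = 1 + \kappa \ptau \solhn{j}$ (the hypotheses of that lemma hold by Lemma~\ref{Lemma:UniformEst_solhn} and the uniform lower bound \eqref{gamma_bound_Euler}), yielding control of $\norm{\ptau^2 \errhn{n+1}}_{\Ltwo}^2$ minus the initial quantity $\norm{\ptau^2 \errhn{2}}_{\Ltwo}^2$ plus lower-order mass-type errors. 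The principal part $-c^2 \ip{\Delta_h \ptau \errhn{j+1}}{\ptau^2 \errhn{j+1}} = c^2 \ip{\nabla \ptau \errhn{j+1}}{\nabla \ptau^2 \errhn{j+1}}$ gives, by \eqref{eq:ftc_Euler_simple}, $\norm{\nabla \ptau \errhn{n+1}}_{\Ltwo}^2 - \norm{\nabla \ptau \errhn{2}}_{\Ltwo}^2$, and the damping term produces $\beta \tau \sum_j \norm{\nabla \ptau^2 \errhn{j+1}}_{\Ltwo}^2$ directly.

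The routine terms $\kappa \ptau^2 \solhn{j} \ptau^2 \errhn{n}$ and $\ell \nabla \ptau \solhn{j} \cdot \nabla \ptau \errhn{n}$ are absorbed by H\"older's and Young's inequalities together with the $L^\infty$-bounds on $\solhn{j}$ from Lemma~\ref{Lemma:UniformEst_solhn}, creating contributions of the form $\norm{\ptau^2 \errhn{j+1}}_{\Ltwo}^2$ and $\norm{\nabla \ptau \errhn{j+1}}_{\Ltwo}^2$ (plus analogous terms at the previous index, which are controlled by shifting the summation and using Lemma~\ref{lem:errhn2} for the $j=2$ contribution). The defect enters via $\ip{\ptau \delta_h^{j+1}}{\ptau^2 \errhn{j+1}}$ and is handled by Young.

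The crux is the quasilinear gradient term $\ell \ip{\nabla \solhn{j} \cdot \nabla \ptau^2 \errhn{j+1}}{\ptau^2 \errhn{j+1}}$, which is the discrete counterpart of the estimate \eqref{est_nabla_ptsq_errh}. The plan is to split
\[
\nabla \solhn{j} = \nabla \soln{j} - \nabla (\soln{j} - \projRitz \soln{j}) - \nabla \errhn{j},
\]
and use the pointwise identity $(\ptau^2 \errhn{j+1}) \nabla \ptau^2 \errhn{j+1} = \tfrac12 \nabla (\ptau^2 \errhn{j+1})^2$ combined with integration by parts on the $\nabla \soln{j}$ contribution to convert it into $-\tfrac12 \ip{\Delta \soln{j} \ptau^2 \errhn{j+1}}{\ptau^2 \errhn{j+1}}$, which is bounded by $\norm{\Delta \soln{j}}_{\Linf} \norm{\ptau^2 \errhn{j+1}}_{\Ltwo}^2$. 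The Ritz defect contribution is handled by the inverse estimate on $\nabla \ptau^2 \errhn{j+1}$ and the approximation bound \eqref{eq:projRitz_approx}, producing a harmless multiple of $\norm{\ptau^2 \errhn{j+1}}_{\Ltwo}^2$.

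The true obstacle is the remaining piece $\ip{\nabla \errhn{j} \cdot \nabla \ptau^2 \errhn{j+1}}{\ptau^2 \errhn{j+1}}$. Following the semi-discrete template, I would bound
\[
\bigl| \ip{\nabla \errhn{j} \cdot \nabla \ptau^2 \errhn{j+1}}{\ptau^2 \errhn{j+1}} \bigr| \lesssim \norm{\errhn{j}}_{W^{1,6}(\Omega)} \norm{\nabla \ptau^2 \errhn{j+1}}_{\Ltwo} \norm{\ptau^2 \errhn{j+1}}_{L^3(\Omega)},
\]
then apply the discrete Sobolev embedding \eqref{eq:discrete_Sobolev_embedding} to the first factor, the inverse estimate $\norm{\nabla \ptau^2 \errhn{j+1}}_{\Ltwo} \lesssim h^{-1} \norm{\ptau^2 \errhn{j+1}}_{\Ltwo}$ to the second, and $\norm{\cdot}_{L^3(\Omega)} \lesssim h^{-d/6} \norm{\cdot}_{\Ltwo}$ to the third. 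The product of inverse factors is $h^{-1-d/6}$, which is exactly compensated by the bootstrap bound \eqref{def:finaln} on $h^{-1-d/6-\varepsilon} \|\Delta_h \errhn{j}\|$ in mean-square, leaving a harmless factor $h^{\varepsilon}$. Pulling out the $\ell^\infty$-in-$j$ norm of $\ptau^2 \errhn{j+1}$ as a small factor $\alpha$ via Young's inequality and absorbing the companion $\tau$-sum with the CFL-type condition \eqref{eq:def_CFL_Euler} yields the desired form with the $\alpha \normlinfLtwo{\ptau^2 \errhn{j}}{2}{n+1}^2$ term on the right. Collecting all contributions closes the estimate \eqref{eq:est_pt2_errh_Euler}.
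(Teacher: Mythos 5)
Your proposal is correct and follows essentially the same route as the paper: testing the discretely differentiated error equation with $\ptau^2 \errhn{j+1}$, summing, invoking Lemma~\ref{lem:ftc_Euler} with $\omega_j = 1+\kappa\ptau\solhn{j}$, and handling the critical quasilinear term via the same three-way splitting through $\soln{j}$ and $\projRitz \soln{j}$ (the paper's expansion \eqref{eq:expansion_critical_term}), with the $\nabla\errhn{j}$ remainder controlled by exactly the combination $h^{-1-d/6}\norm{\Delta_h\errhn{j}}_{\Ltwo}\norm{\ptau^2\errhn{j+1}}_{\Ltwo}^2$, Cauchy--Schwarz in $j$, and the $C_0$ bootstrap bound \eqref{def:finaln}. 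The only slight imprecision is the remark about absorbing the companion $\tau$-sum with the CFL-type condition: that sum is simply retained on the right-hand side for the later Gr\"onwall argument, and \eqref{eq:def_CFL_Euler} is not needed at this step.
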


	\begin{proof}
		
		We test the discretely differentiated error for $\errhn{j+1}$ with $\varphi_h = \ptau^2 \errhn{j+1}$ to obtain
		\begin{align} 
		&\ \quad   \ip{(1+\kappa \ptau \solhn{j}) \ptau^3 \errhn{j+1}  }{\ptau^2 \errhn{j+1}}
		+
		c^2 \ip{\nabla \ptau \errhn{j+1} }{\nabla \ptau^2 \errhn{j+1} }    
		+	
		\beta \norm{\nabla \ptau^2 \errhn{j+1} }_{\Ltwo}^2
		\\
		&\leq \begin{multlined}[t]
		\kappa 
		 | \ip{ \ptau^2 \solhn{j}  \ptau^2 \errhn{j}}{\ptau^2 \errhn{j+1}}  |
		+ \ell 
		| \ip{\nabla \solhn{j} \cdot \nabla \ptau^2\errhn{j+1}}{\ptau^2 \errhn{j+1}} |
		\\
		 +\ell | \ip{   \nabla \ptau \solhn{j} \cdot \nabla \ptau \errhn{j}}{\ptau^2 \errhn{j+1}} |
		  + | \ip{  \ptau\delta_h^{j+1}}{\ptau^2 \errhn{j+1}} | \end{multlined}
		\\
		&\lesssim
		\norm{ \ptau^2 \errhn{j}}_{\Ltwo}^2
				+
		\norm{ \nabla \ptau \errhn{j+1}  }_{\Ltwo}^2
		+
		\norm{ \ptau\delta_h^{j+1} }_{\Ltwo}^2
		+
		|  \ell \ip{   \nabla \solhn{j} \cdot \nabla \ptau^2 \errhn{j+1}}{\ptau^2 \errhn{j+1}} |,
	\end{align}		
	where we have also used the uniform bounds on $\solhn{n}$ stated in Lemma~\ref{Lemma:UniformEst_solhn} in the last line.
	We sum these inequalities from $j = 2,\ldots,n$ and use Lemma~\ref{lem:ftc_Euler} to conclude that
	\begin{equation} \label{est:semi_implicit_diff}
		\begin{aligned}
		&\, \norm{\ptau^2 \errhn{n+1}}_{\Ltwo}^2
		+
		\norm{\ptau \nabla \errhn{n+1}}_{\Ltwo}^2
		+
		\tau \sum\limits_{j=2}^n
		\beta \norm{\nabla \ptau^2 \errhn{n+1} }^2_{\Ltwo} 
		\\
		\lesssim
		&\, \begin{multlined}[t] 
		\tau 	\sum\limits_{j=2}^{n} \bigl(
		\norm{\ptau^2 \errhn{j+1}  }^2_{\Ltwo}
		+
		\norm{\nabla \ptau\errhn{j+1}  }^2_{\Ltwo}
		+
		\norm{\ptau \delta_h^{j+1} }^2_{\Ltwo} \bigr)
	+\norm{\ptau^2 \errhn{2}}_{\Ltwo}^2\\
	+
	\norm{\ptau \nabla \errhn{2}}_{\Ltwo}^2	 +
		\tau 	\sum\limits_{j=2}^{n}
		|  \ip{   \nabla \solhn{j} \cdot \nabla \ptau^2 \errhn{j+1}}{\ptau^2 \errhn{j+1}} | .\end{multlined}
	\end{aligned}
	\end{equation}
It remains to bound the last term. To this end, we use the expansion from \eqref{eq:expansion_critical_term} to obtain
	\begin{align}
	&\, \tau 	\sum\limits_{j=2}^{n}
|  \ip{   \nabla \solhn{j} \cdot \nabla \ptau^2 \errhn{j+1}}{\ptau^2 \errhn{j+1}} | 
\\
\lesssim
&\, \tau 	\sum\limits_{j=2}^{n}  	\bigl(	\norm{\ptau^2 \errhn{j+1}  }^2_{\Ltwo}
+
\norm{\nabla \ptau\errhn{j+1}  }^2_{\Ltwo} \bigr)
+
h^{-1-d/6} \tau 	\sum\limits_{j=2}^{n} 
\norm{\Delta_h \errhn{j}}_{\Ltwo}
\norm{\ptau^2 \errhn{j+1}  }^2_{\Ltwo}.
\end{align}
Since the assertions of Proposition~\ref{prop:intermediate_step} hold up to $n$, by the $C_0$ bounds in \eqref{def:finaln}
and Young's inequality, we have
	\begin{align}
	&\, h^{-1-d/6} \tau 	\sum\limits_{j=2}^{n} 
	\norm{\Delta_h \errhn{j}}_{\Ltwo}
	\norm{\ptau^2 \errhn{j+1}  }^2_{\Ltwo}
	\\
	\lesssim
	&\, 
	\normlinfLtwo{\ptau^2 \errhn{j}}{2}{n+1}
	h^{-1-d/6} 
	\bigl(
	\tau 	\sum\limits_{j=2}^{n} 
	\norm{\Delta_h \errhn{j}}_{\Ltwo}^2
		\bigr)^{1/2} 
	\bigl(
	\tau 	\sum\limits_{j=2}^{n} 
	\norm{\ptau^2 \errhn{j+1}  }^2_{\Ltwo} \bigr)^{1/2} 
		\\
	\lesssim
	&\, 
	\alpha
	\normlinfLtwo{\ptau^2 \errhn{j}}{2}{n+1}^2
	+
	\tau 	\sum\limits_{j=2}^{n} 
	\norm{\ptau^2 \errhn{j+1}  }^2_{\Ltwo},
\end{align}	
where $\alpha>0$ can be chosen arbitrarily.	Employing this bound in \eqref{est:semi_implicit_diff} leads to \eqref{eq:est_pt2_errh_Euler}.
\end{proof}

We now combine all previous results in this section to arrive at the statement of Proposition~\ref{prop:intermediate_step}. 
%

\begin{proof}[Proof of Proposition~\ref{prop:intermediate_step}] 
Recall that the statement of Proposition~\ref{prop:intermediate_step} holds for $n=2$ by the results of Section~\ref{subsec:induction_base}.   We complete the induction step by showing the existence and proving
the estimates  \eqref{eq:bounds_intermediate} and \eqref{def:finaln}. Since the assertions in Proposition~\ref{prop:intermediate_step}
are assumed to hold up to $n$,
by Lemma~\ref{lem:existence_Euler_approximation}
we have existence of the solution $\solhn{n+1}$ of \eqref{eq:Euler}.
In addition,
	by
	Proposition~\ref{prop:Deltah_errh_Euler}
	and
	Proposition~\ref{prop:pt2_errh_Euler}, we have 
	\begin{align}
	&\, \begin{multlined}[t] \norm{\ptau^2 \errhn{n+1}}_{\Ltwo}^2
	+
	\norm{\ptau \nabla \errhn{n+1}}_{\Ltwo}^2
		+
	\tau \sum\limits_{j=1}^{n}
	\norm{\Delta_h \errhn{j+1} }^2_{\Ltwo}
 +
	\beta \tau \sum\limits_{j=2}^n
	\norm{\nabla \ptau^2 \errhn{j+1} }^2_{\Ltwo} \\
	+
	\beta \norm{\Delta_h \errhn{n+1} }_{\Ltwo}^2 \end{multlined}
	\\
	&\lesssim
\alpha
\normlinfLtwo{\ptau^2 \errhn{j}}{2}{n+1}^2
+
\tau \sum\limits_{j=1}^n 	\bigl( 
\norm{\ptau^2 \errhn{j+1}  }^2_{\Ltwo}
+
\norm{\nabla \ptau\errhn{j+1}  }^2_{\Ltwo}	\bigr)
\\
&\, 
+	\norm{\ptau^2 \errhn{2}}_{\Ltwo}^2
	+
	\norm{\ptau \nabla \errhn{2}}_{\Ltwo}^2
	+
	\beta \norm{\Delta_h \errhn{1} }_{\Ltwo}^2	
	+
	\tau  \sum\limits_{j=1}^n
	\bigl( 
		\norm{\delta_h^{j+1} }^2_{\Ltwo}
	+
		\norm{\ptau \delta_h^{j+1} }^2_{\Ltwo}
	\bigr)
\end{align}
with the hidden constant independent of $h$, $\tau$, $n$, and $\beta$. From here using
Lemmas~\ref{lem:errhn1},~\ref{Lemma:Bound_defect_Euler},
and~\ref{lem:errhn2}, together with
	\begin{equation} \label{eq:lower_order_bounds}
		\norm{ \errhn{j} }_{\Ltwo} \lesssim 	\norm{ \nabla \errhn{j} }_{\Ltwo},
		\qquad
		\norm{ \nabla \errhn{j} }_{\Ltwo}^2 \leq T  
		\tau \sum\limits_{k=1}^{j}
		\norm{ \nabla \ptau \errhn{j} }^2_{\Ltwo},
	\end{equation}
	due to $\errhn{0} = 0$,
we infer 
for $m=n+1$
	\begin{align}
&\, \begin{multlined}[t] \norm{\ptau^2 \errhn{m}}_{\Ltwo}^2
+
\norm{\ptau \nabla \errhn{m}}_{\Ltwo}^2
+
\tau \sum\limits_{j=1}^{m-1}
\norm{\Delta_h \errhn{j+1} }^2_{\Ltwo}
+
\beta \tau \sum\limits_{j=2}^{m-1}
\norm{\nabla \ptau^2 \errhn{j+1} }^2_{\Ltwo} \\
+
\beta \norm{\Delta_h \errhn{m} }_{\Ltwo}^2 \end{multlined}
\\
&\lesssim
\alpha
\normlinfLtwo{\ptau^2 \errhn{j}}{2}{m}^2
+
\bigl( \tau + h^k \bigr)^2
+
\tau \sum\limits_{j=1}^{m-1} \bigl(
\norm{\ptau^2 \errhn{j+1}  }^2_{\Ltwo}
+
\norm{\nabla \ptau\errhn{j+1}  }^2_{\Ltwo} \bigr)
\end{align}
for any $\alpha>0$.
It is straightforward to prove that
 analogous estimates hold for $m\leq n$.
Therefore, taking the maximum
of this inequality over $m=2,\ldots,n+1$ and choosing $\alpha$ sufficiently small,
together with a Gr\"onwall argument yields the error estimate
stated in \eqref{eq:bounds_intermediate}
with a constant independent of $n$. \\
\indent We then use the bound in \eqref{eq:bounds_intermediate}, which is uniform in $n$,
and the \revisedd{2}{CFL-type condition} in
\eqref{eq:def_CFL_Euler} to obtain estimates in
\eqref{def:finaln}. This step closes the induction argument.
%
%
\end{proof}

The statement of Theorem~\ref{thm:ErrorSemiImplicit} now follows immediately. 

\begin{proof}[Proof of Theorem~\ref{thm:ErrorSemiImplicit}] 
	Using the embedding  in \eqref{eq:discrete_Sobolev_embedding}
	and the best approximation properties of the Ritz projection in \eqref{eq:projRitz_approx},
	we obtain the claimed estimate.
\end{proof}

\subsection{The inviscid limit of the fully discrete solution}
	\label{sec:limit_full_discrete}

We next study the limiting behavior of the fully discrete problem as $\beta \rightarrow 0$ and prove Theorem~\ref{thm:FullyDiscrete_beta_limit}.
Similarly to Section~\ref{sec:limit_space_discrete}, we emphasize the $\beta$ dependence of the fully discrete solution by using the notation $\solhnbeta{n}$ when
 $\beta \in (0, \bar{\beta}]$ and $\solhnzero{n}$ in the inviscid case $\beta=0$.\\
%
\indent We define the quantity  
\begin{equation}
	\solhnbar{n} = \solhnzero{n}  - \solhnbeta{n} ,
\end{equation}
and estimate it to arrive at Theorem~\ref{thm:FullyDiscrete_beta_limit}.\\

\begin{proof}[Proof of Theorem~\ref{thm:FullyDiscrete_beta_limit}]
By subtracting the equation for $\solhnbeta{j+1}$ from the equation for $\solhnzero{j+1}$, we conclude that $\solhnbar{j+1}$ satisfies
\begin{equation} \label{eq:Euler_bar}
\begin{aligned} 
	&\, \ip{ (1+\kappa \ptau \solhnzero{j}) \ptau^2 \solhnbar{j+1} 
	 - 
	 c^2 \Delta_h \solhnbar{j+1}  
	 	}{\varphi_h}
	 \\
		= 
&\,   \ip{  - \kappa \ptau \solhnbar{j} \ptau^2 \solhnbeta{j+1} 
-
\ell \nabla \solhnzero{j} \cdot \nabla \ptau \solhnbar{j+1} 
-
	 \ell \nabla \solhnbar{j} \cdot \nabla \ptau \solhnbeta{j+1} 
	-
	\beta \Delta_h  \ptau \solhnbeta {j+1}
		}{\varphi_h}
\end{aligned}
\end{equation}
for  $j=1,\ldots, n$. We test this problem with $\varphi_h = \ptau \solhnbar{j+1} $, sum from $j=1,\ldots, n$, and use
Lemma~\ref{lem:ftc_Euler} to obtain
\begin{equation} \label{est_fully_discrete_betalim}
\begin{aligned} 
&	\norm{\ptau \solhnbar{n+1} }^2_{\Ltwo}
	+
	\norm{\nabla \solhnbar{n+1} }^2_{\Ltwo} 
	\\
	%
&\lesssim \begin{multlined}[t]
	\tau \sum\limits_{j=1}^{n} \bigl(
	\norm{\ptau \solhnbar{j+1} }^2_{\Ltwo}
	+
	\norm{\nabla \solhnbar{j+1} }^2_{\Ltwo} \bigr)
 + | \ell  \ip{   \nabla \solhnzero{n} \cdot \nabla \ptau \solhnbar{j+1}    }{\ptau \solhnbar{j+1}}  |\\
+ 
| \beta \tau \sum\limits_{j=1}^{n} \ip{     \nabla  \ptau \solhnbeta {j+1}  }{ \nabla \ptau \solhnbar{j+1}}  |, \end{multlined}
\end{aligned}
\end{equation}
where we have also used the uniform bounds on $\solhnbeta{n+1}$ guaranteed by Lemma~\ref{Lemma:UniformEst_solhn}.
We proceed to estimate the right-hand side terms. 
Using the expansion in \eqref{eq:expansion_critical_term}, we estimate
\begin{equation} 
	\begin{aligned}
	&\tau \sum\limits_{j=1}^{n}
	| \ell  \ip{   \nabla \solhnzero{n} \cdot \nabla \ptau \solhnbar{j+1}    }{\ptau \solhnbar{j+1}}  |
	\\
	&
	\lesssim
	\tau \sum\limits_{j=1}^{n} 
	\norm{\ptau  \solhnbar{j+1}}_{\Ltwo}^2 
	+
	\normlinfLtwo{\ptau  \solhnbar{j}}{2}{n+1}
	h^{-1-d/6}
	\tau \sum\limits_{j=1}^{n} 
	\norm{\Delta_h \errhnzero{j}}_{\Ltwo}
	\norm{ \ptau  \solhnbar{j+1}}_{\Ltwo} 
		\\
	&
	\lesssim
	\tau \sum\limits_{j=1}^{n}
	\norm{\ptau  \solhnbar{j+1}}_{\Ltwo}^2 
	+
\alpha_1	\normlinfLtwo{\ptau  \solhnbar{j}}{2}{n+1}
\end{aligned}
\end{equation}
for any $\alpha_1 >0 $. It remains to bound the term involving $\beta$ in \eqref{est_fully_discrete_betalim} to set up a Gr\"onwall argument. 
To this end, we employ the summation by parts formula \eqref{eq:sum_by_parts} to obtain
\begin{equation}
		\begin{aligned}
&\, | \beta \tau \sum\limits_{j=1}^{n}   \ip{     \nabla  \ptau \solhnbeta {j+1}  }{ \nabla \ptau \solhnbar{j+1}} |
\\
=
&\, | \beta \bigl( \ip{     \nabla  \ptau \solhnbeta {n+1}  }{ \nabla  \solhnbar{n+1}}  
-
\ip{     \nabla  \ptau \solhnbeta {1}  }{ \nabla  \solhnbar{1}} 
\bigr)
 -
\beta \tau \sum\limits_{j=1}^{n}  \ip{  \ptau^2   \nabla   \solhnbeta {j+1}  }{ \nabla  \solhnbar{j+1}} |
\\
\lesssim
&\,\begin{multlined}[t] \beta^2
\bigl(
\norm{   \nabla  \ptau \solhnbeta {n+1} }^2_{\Ltwo}
+\norm{   \nabla  \ptau \solhnbeta {1} }^2_{\Ltwo}
+
\tau \sum\limits_{j=1}^{n}  \norm{   \ptau^2   \nabla   \solhnbeta {j+1}  }^2_{\Ltwo}
\bigr)
+
\norm{ \nabla  \solhnbar{1} }^2_{\Ltwo}\\
+
\alpha_2
\norm{ \nabla  \solhnbar{n+1} }^2_{\Ltwo}
+
\tau \sum\limits_{j=1}^{n}  \norm{ \nabla  \solhnbar{j+1} }^2_{\Ltwo} \end{multlined}
\end{aligned}
\end{equation}
for any $\alpha_2>0$, where we have also relied on the $C_0$ bounds given in \eqref{def:finaln}. Similarly to the reasoning in Section~\ref{sec:FE_analysis},  by Lemma~\ref{Lemma:UniformEst_solhn} we have uniform bounds for the first two terms multiplied with $\beta^2$ on the right-hand side, and we can
proceed similarly to \eqref{eq:nabla_pt2_solhbeta} to bound the third term.
Further, by \eqref{eq:def_Euler_inits}, it holds
\begin{align}
\norm{ \nabla  \solhnbar{1} }_{\Ltwo} = \frac{\beta \tau^2}{2} \norm{ \projRitz ( 1 +  \kappa \soltinit )^{-1}  \Delta \soltinit  }_{\Ltwo} 
\lesssim  C (\norm{\sol}_{{\calU}}) \cdot \tau^2 \beta
\end{align}
and we can conclude by reducing $\alpha_2$
that
\begin{equation} \label{eq:recursion_bar_n}
	\begin{aligned}
	&\norm{\ptau \solhnbar{n+1} }^2_{\Ltwo}
	+
	\norm{\nabla \solhnbar{n+1} }^2_{\Ltwo} \\
	&\leq 
	C
	\beta^2
	+
	C \alpha_1 
	\normlinfLtwo{ \ptau  \solhnbar{j}}{2}{n+1}^2
	+ C
	\tau \sum\limits_{j=1}^{n} \bigl(
	\norm{\ptau \solhnbar{j}}^2_{\Ltwo} + \norm{\nabla \solhnbar{j}}^2_{\Ltwo} \bigr). 
	\end{aligned}
	\end{equation}
We now take the maximum of this inequality over $n=2,\ldots, N+1$, and for small enough $\alpha_1$ apply a Gr\"onwall argument to 
obtain 
\begin{equation}
\max_{n=1,\ldots, N+1}	\| \ptau \solhnbeta{n} - \ptau \solhnzero{n} \|_{\Ltwo}
+
\max_{n=1,\ldots, N+1} \|\nabla (\solhnbeta{n} - \solhnzero{n})\|_{\Ltwo} \leq C \beta,
\end{equation}
as claimed.
\end{proof}

We see that, under the assumptions of Theorem~\ref{thm:ErrorSemiImplicit}, also the fully discrete problem preserves the asymptotic behavior of the exact and semi-discrete solutions as $\beta \rightarrow 0$.

\section{Non-robust estimates for linear finite elements}
\label{sec:non_robust}

In this final section, we extend the results presented in Section~\ref{sec:Main_results}
to the case of linear finite elements, i.e., $k=1$. 
We can qualitatively prove the same error bounds with constants that do not depend on the damping parameter $\beta>0$,
as long as we couple the discretization parameters with the damping parameter correctly.

\subsection{Semi-discretization}
\label{subsec:non_robust_semi}

We first consider the error bound for the semi discretization in space,
and state a variant of Theorem~\ref{thm:kuznetsov_space_main} that takes $\beta >0 $ into account. We first state our theorem, and devote the rest of this section to its proof. Since several arguments are unchanged compared to Section~\ref{sec:FE_analysis}, we only present the key estimates here. 

\begin{mytheorem}[Non-robust finite element estimates] \label{thm:kuznetsov_space_main_beta_dep} 
	Let the assumptions of Theorem~\ref{thm:kuznetsov_space_main}
	hold, but replace the assumptions on $k$ and $\beta$ with
	$k \geq 1$ and $\beta > 0$,
	satisfying the relation
	\begin{equation} \label{eq:beta_CFL_space}
		h^{k - d/6 - 2\varepsilon} \leq C_1 \sqrt{\beta},
	\end{equation}
	for some $C_1$, $\varepsilon>0$ which are independent of $h$ and $\beta$.
	Then there exists $h_0>0$ and a constant $C>0$,  independent of $h$ and $\beta$, such that for all $h\leq h_0$, the following error bound holds:
	\begin{equation} \label{FE_final_est_nonrobust}
		\begin{aligned}
			\begin{multlined}[t]	\norm{\pt^2 \sol(t) - \pt^2 \solh(t) }^2_{\Ltwo}
				+  
				\norm{\nabla \pt \sol(t) - \nabla \pt \solh(t)}^2_{\Ltwo}  
			\\
				+
			\int_0^t \norm{\nabla \sol(s) - \nabla \solh(s)}^2_{L^6(\Omega)} \ds
				\leq C 
				h^{2 k } \end{multlined}
		\end{aligned}
	\end{equation}
	for all $t \in [0,T]$.
\end{mytheorem}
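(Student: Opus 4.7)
The proof follows the blueprint of Theorem~\ref{thm:kuznetsov_space_main}; the only obstruction to allowing $k=1$ there was that the conditions $h^{-1-d/6}\norm{\pt^2\errh(s)}_{\Ltwo},\,h^{-1-d/6}\norm{\nabla\pt\errh(s)}_{\Ltwo}\leq C_0$ in \eqref{def:finaltimeh} cannot be closed by an $O(h^k)=O(h)$ error. My plan is to rerun Section~\ref{sec:FE_analysis} with a relaxed $\finalth$, namely with the two inequalities above replaced by
\[
\tfrac{1}{\sqrt{\beta}}\,h^{-d/6-\varepsilon}\norm{\pt^2\errh(s)}_{\Ltwo}\leq C_0,
\qquad
\tfrac{1}{\sqrt{\beta}}\,h^{-d/6-\varepsilon}\norm{\nabla\pt\errh(s)}_{\Ltwo}\leq C_0.
\]
Since $\errh(0)=\pt\errh(0)=0$, Lemma~\ref{lemma:est_errh_zero} together with the coupling \eqref{eq:beta_CFL_space} yields these conditions strictly at $t=0$ (with slack $h^\varepsilon$); the Picard--Lindel\"of step (Lemma~\ref{Lemma:Initial_bounds}) and the a priori bounds for $\solh$ (Lemma~\ref{lem:a-priori_bounds_on_solh}) then go through without essential change, exploiting that $\norm{\pt^2\errh},\norm{\nabla\pt\errh}\leq C_0\sqrt{\beta}\,h^{d/6+\varepsilon}$ remains uniformly bounded in $h$ and $\beta\in(0,\bar\beta]$.

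The substantive modification occurs in Proposition~\ref{prop:fe_FirstEstimate}, where the critical term $\ip{\nabla\errh\cdot\nabla\pt^2\errh}{\pt^2\errh}$ from \eqref{est_nabla_ptsq_errh} is now balanced against the dissipation $\beta\int_0^t\norm{\nabla\pt^2\errh}^2$ on the left-hand side of \eqref{est_eh_Step1}, rather than being absorbed into $\int_0^t\norm{\Delta_h\errh}^2$ via an inverse estimate on $\nabla\pt^2\errh$. Using \eqref{eq:inv_estimate} and \eqref{eq:discrete_Sobolev_embedding} to obtain $\norm{\nabla\errh}_{\Linf}\lesssim h^{-d/6}\norm{\Delta_h\errh}_{\Ltwo}$ and then Young's inequality, I arrive at
\[
|\ip{\nabla\errh\cdot\nabla\pt^2\errh}{\pt^2\errh}|
\leq \tfrac{\beta}{4}\norm{\nabla\pt^2\errh}_{\Ltwo}^2
+\tfrac{C}{\beta}\,h^{-d/3}\norm{\Delta_h\errh}_{\Ltwo}^2\,\norm{\pt^2\errh}_{\Ltwo}^2.
\]
The first term is absorbed by $\beta\int_0^t\norm{\nabla\pt^2\errh}^2$. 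For the second, the new $\finalth$-bound gives $\sup_{[0,\finalth]}\norm{\pt^2\errh}^2\leq C_0^2\beta\,h^{d/3+2\varepsilon}$, so the prefactor $\beta^{-1}h^{-d/3}\sup\norm{\pt^2\errh}^2\leq C_0^2\,h^{2\varepsilon}$ is small, and the resulting contribution $C_0^2 h^{2\varepsilon}\int_0^t\norm{\Delta_h\errh}^2$ is absorbed by the coercivity $\tfrac{c^2}{4}\int_0^t\norm{\Delta_h\errh}^2$ produced in the analogue of Proposition~\ref{prop:kuznetsov_space_main_v2}.

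With these changes in place, the defect bounds of Lemma~\ref{Lemma:Bound_defect} combined with Gr\"onwall's inequality yield $\norm{\pt^2\errh(t)}_{\Ltwo}+\norm{\nabla\pt\errh(t)}_{\Ltwo}+\bigl(\int_0^t\norm{\Delta_h\errh}^2\bigr)^{1/2}\lesssim h^{k}$ on $[0,\finalth]$, with a constant independent of $h$ and $\beta$. The coupling \eqref{eq:beta_CFL_space} then implies $\tfrac{1}{\sqrt\beta}h^{-d/6-\varepsilon}\norm{\pt^2\errh(\finalth)}_{\Ltwo}\lesssim h^\varepsilon<C_0$, and analogously for $\norm{\nabla\pt\errh}$, so the bootstrap closes and $\finalth=T$. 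The bound \eqref{FE_final_est_nonrobust} then follows by writing $\sol-\solh=(\sol-\projRitz\sol)+\errh$ and applying \eqref{eq:projRitz_approx}. The main delicate point is calibrating the exponent in the new $\finalth$ with the $\varepsilon$-slack in \eqref{eq:beta_CFL_space}: this is precisely what the form $h^{k-d/6-2\varepsilon}\leq C_1\sqrt\beta$ is designed to achieve, which explains why the linear-element analysis requires this specific coupling.
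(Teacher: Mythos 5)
Your overall strategy coincides with the paper's: relax the bootstrap set to $\beta^{-1/2}h^{-d/6-\varepsilon}\norm{\pt^2 \errh}_{\Ltwo},\ \beta^{-1/2}h^{-d/6-\varepsilon}\norm{\nabla\pt\errh}_{\Ltwo}\le C_0$, trade the critical term $\ip{\nabla\errh\cdot\nabla\pt^2\errh}{\pt^2\errh}$ against the dissipation $\beta\int_0^t\norm{\nabla\pt^2\errh}^2_{\Ltwo}$, and close the bootstrap through \eqref{eq:beta_CFL_space}. Your treatment of that particular term is essentially the paper's. However, there is a genuine gap in the claim that Lemma~\ref{lem:a-priori_bounds_on_solh} ``goes through without essential change.'' That lemma bounds $\norm{\pt^2\solh}_{\Linf}$, $\norm{\nabla\pt\solh}_{\Linf}$ and $\norm{\solh}_{\Woneinf}$ via $h^{-d/2}\norm{\pt^2\errh}_{\Ltwo}$ etc.; in the robust case this is controlled because $\norm{\pt^2\errh}_{\Ltwo}\le C_0h^{1+d/6}$ and $1+d/6\ge d/2$ for $d\le 3$. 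Under your relaxed conditions you only get $h^{-d/2}\cdot C_0\beta^{1/2}h^{d/6+\varepsilon}=C_0\beta^{1/2}h^{d/6+\varepsilon-d/2}$, which for $d=3$ is $C_0\beta^{1/2}h^{\varepsilon-1}$ and is unbounded as $h\to 0$: the coupling \eqref{eq:beta_CFL_space} is a \emph{lower} bound on $\beta$, so $\beta$ may stay at $\bar\beta$. Consequently the estimates that used $\norm{\pt^2\solh}_{\Linf}\lesssim 1$ and $\norm{\nabla\pt\solh}_{\Linf}\lesssim 1$ in Proposition~\ref{prop:fe_FirstEstimate} (the term $\norm{\pt^2\solh\,\pt^2\errh}^2_{\Ltwo}$ and the term \eqref{est_grad_1}), as well as the one using $\norm{\nabla\solh}_{\Linf}\lesssim 1$ in Proposition~\ref{prop:kuznetsov_space_main_v2}, all break down. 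The paper re-estimates each of these by the same $\beta$-dissipation trick you apply to the gradient term, e.g.\ $\norm{\pt^2\errh\,\pt^2\errh}^2_{\Ltwo}\lesssim(\beta^{-1}h^{-d/3}\norm{\pt^2\errh}^2_{\Ltwo})\,\beta\norm{\nabla\pt^2\errh}^2_{\Ltwo}\le C_0^2h^{2\varepsilon}\,\beta\norm{\nabla\pt^2\errh}^2_{\Ltwo}$; this is the bulk of the actual proof and is absent from your argument.

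A second, related omission: the paper's bootstrap set \eqref{def:finaltimeh_beta} contains a \emph{third} condition, $h^{-d/6-\varepsilon}\norm{\Deltah\errh(s)}_{\Ltwo}\le C_0$ (without the factor $\beta^{-1/2}$). This is what replaces the lost bound $\norm{\nabla\solh}_{\Linf}\lesssim 1$ when handling $\ip{\ell\nabla\errh\cdot\nabla\pt\errh}{\Deltah\errh}$ in the second testing step, via $\norm{\nabla\errh}_{\Linf}\lesssim h^{-d/6}\norm{\Deltah\errh}_{\Ltwo}\le C_0h^{\varepsilon}$; it is closed at the end using the a posteriori bound $\beta\norm{\Deltah\errh(t)}^2_{\Ltwo}\le Ch^{2k}$ together with \eqref{eq:beta_CFL_space}. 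Without this condition (or a substitute) your second testing step does not close. So the skeleton of your argument is right, but the proof as written would fail at several concrete points where the robust $L^\infty$ bounds on the discrete solution are silently reused.
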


The key idea of the proof remains the same as before, and hence
analogously to Section~\ref{sec:FE_analysis}, we work on the time interval $[0,\finalthbeta]$
with
\begin{equation} \label{def:finaltimeh_beta}
	\begin{aligned}
		\finalthbeta \coloneqq \sup  \Big \{  t \in (0,T] \mid
		\	&\text{a unique solution }	\solh \in H^{3}(0,t; V_h) \text{ of \eqref{eq:Kuznetsov_space_discr_full_eq}  exists, and} \\
		%
		&\, \beta^{-1/2} h^{-d/6-\varepsilon} \norm{\pt^2 \errh(s)}_{\Ltwo} \leq C_0,  
		\\
		&\, \beta^{-1/2} h^{-d/6-\varepsilon} \norm{\nabla \pt \errh(s)}_{\Ltwo}
		\leq C_0 ,
		\\
		&\, h^{-d/6-\varepsilon} \norm{\Delta_h \errh(s)}_{\Ltwo}
		\leq C_0 \ \text{ for all } s \in [0,t]  \Big  \} ,
	\end{aligned}
\end{equation}
for some fixed $C_0>0$ and $\varepsilon$ as in \eqref{eq:beta_CFL_space}.
We then conduct the error analysis on this interval, with the aim of later extending $\finalthbeta$ to $T$, analogously to before. 
By arguing as in Lemma~\ref{lemma:est_errh_zero}, we can prove that $\finalthbeta>0$, as well as obtain the correct estimates for $\errh(0)$, $\pt \errh(0)$, and $\pt^2 \errh(0)$. We omit those details here.

\begin{lemma} \label{lem:a-priori_bounds_on_solh_beta} 
	Let the assumptions of Theorem~\ref{thm:kuznetsov_space_main_beta_dep}  hold.
	Then, we have
	%
	\begin{equation} \label{gamma_bound_beta}
		1 + \kappa \pt \solh \geq 
		\quasilowerbound > 0,  \quad (x,t) \in \Omega \times [0, \finalth],
	\end{equation}
	where  $\gamma$ does not depend on $h$, $\beta$, or $\finalthbeta$.
\end{lemma}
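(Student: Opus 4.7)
The plan is to adapt the strategy of Lemma~\ref{lem:a-priori_bounds_on_solh} to the $k=1$ regime, where the $\finalthbeta$ definition \eqref{def:finaltimeh_beta} now features the factor $\beta^{1/2}$ in the bounds on $\nabla \pt \errh$ and $\pt^2 \errh$ in place of an extra $h$. First, I would split $\pt \solh = \projRitz \pt \sol - \pt \errh$ and treat the Ritz projection term by the standard estimate $\norm{\projRitz \pt \sol}_{\Linf} \leq \norm{\pt \sol}_{\Linf} + C h \norm{\pt \sol}_{\Woneinf}$, combining the approximation property \eqref{eq:projRitz_approx} (with $\ell = 0$, $p=\infty$) with the smoothness $\sol \in \calU$ from \eqref{assumptions_exact_sol}.

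The main obstacle is to control $\pt \errh$ in $\Linf$ with a bound that remains small as $h \to 0$ uniformly in $\beta$. I would use the inverse estimate \eqref{eq:inv_estimate} with the exponent $p=6$, followed by the continuous Sobolev embedding $H_0^1(\Omega) \hookrightarrow L^6(\Omega)$ (valid for $d \leq 3$) and Poincaré's inequality, to obtain
\[
\norm{\pt \errh}_{\Linf} \leq C h^{-d/6} \norm{\pt \errh}_{L^6(\Omega)} \leq C h^{-d/6} \norm{\nabla \pt \errh}_{\Ltwo}.
\]
The second bound in \eqref{def:finaltimeh_beta} then yields $\norm{\nabla \pt \errh}_{\Ltwo} \leq C_0 \beta^{1/2} h^{d/6+\varepsilon}$, and consequently
\[
\norm{\pt \errh}_{\Linf} \leq C\, C_0 \bar{\beta}^{1/2} h^{\varepsilon},
\]
which tends to zero as $h \to 0$ uniformly in $\beta \in (0, \bar{\beta}]$. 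The delicate point is precisely the choice $p=6$: this exponent makes the $h^{-d/6}$ from the inverse estimate cancel the $h^{d/6}$ built into the definition of $\finalthbeta$, leaving the small residual factor $h^\varepsilon$. A cruder inverse inequality from $\Ltwo$ to $\Linf$ would not suffice for a $\beta$-uniform conclusion.

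To conclude, I would invoke the non-degeneracy of the exact solution $1 + \kappa \pt \sol \geq \tilde{\gamma} > 0$ from \eqref{assumptions_exact_sol} and choose $h_0 > 0$ sufficiently small (depending only on $|\kappa|$, $\tilde{\gamma}$, $\bar{\beta}$, $C_0$, and $\norm{\pt \sol}_{L^\infty(\Woneinf)}$) so that
\[
|\kappa| \bigl( \norm{\pt \sol - \projRitz \pt \sol}_{\Linf} + \norm{\pt \errh}_{\Linf} \bigr) \leq \tilde{\gamma}/2
\]
for all $h \leq h_0$ and all $t \in [0,\finalthbeta]$. This yields $1 + \kappa \pt \solh \geq \tilde{\gamma}/2 =: \gamma > 0$ on $\Omega \times [0,\finalthbeta]$ with $\gamma$ independent of $h$, $\beta$, and $\finalthbeta$, as claimed.
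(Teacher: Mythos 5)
Your proposal is correct and follows essentially the same route as the paper: decompose $\pt \solh$ via the Ritz projection, bound $\norm{\pt \errh}_{\Linf}$ by $h^{-d/6}\norm{\nabla \pt \errh}_{\Ltwo}$ through the inverse estimate with $p=6$ and the Sobolev embedding, and then use the $\beta^{1/2}h^{d/6+\varepsilon}$ bound built into the definition of $\finalthbeta$ to obtain the residual factor $C\beta^{1/2}h^{\varepsilon}$. The paper's proof is a condensed version of exactly this argument, so no further comparison is needed.
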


\begin{proof}
	Using the stability properties of the Ritz projection stated in \eqref{eq:projRitz_approx},
	we obtain
	\begin{align}
		\norm{\pt \solh(t)}_{\Linf} 
		&\lesssim
		\norm{\pt \sol(t)}_{\Linf} 
		+
		\norm{(\Id -\projRitz) \pt \sol(t)}_{\Linf}  + h^{-d/6} \norm{\nabla \pt \errh(t)}_{\Ltwo} 
		\\
		&\leq 
		\norm{\pt \sol(t)}_{\Linf} + C h^k +  C \beta^{1/2} h^{\varepsilon} 
	\end{align}
	for all $t \in [0, \finalth]$. Hence we have the uniform lower bound in \eqref{gamma_bound_beta} that guarantees non-degeneracy as well as uniform boundedness of $\norm{\pt \solh}_{L^\infty(\Linf)} $.
\end{proof}
~\\[2mm]
We are now ready to derive the relevant estimates and prove the error bound in \eqref{FE_final_est_nonrobust}. Before, we briefly comment on the changes compared to Section~\ref{sec:FE_analysis}.

By the bounds in Propositions~\ref{prop:fe_FirstEstimate}
and~\ref{prop:kuznetsov_space_main_v2}, one obtains
\begin{equation} \label{beta_bound}
\beta  \int_0^t   \norm{\nabla \pt^2 \errh(s) }^2_{\Ltwo} \ds
+
\beta \norm{\Delta_h \errh(t) }^2_{\Ltwo} 
\leq C h^{2 k}.
\end{equation}
However, in order to stay uniform in $\beta$, these bounds were not exploited in the analysis of Section~\ref{sec:FE_analysis}. If we consider now fixed $\beta>0$, this enables us to employ \eqref{beta_bound}
while paying with inverse powers of $\beta$ via Young's inequality. The coupling condition in
\eqref{def:finaltimeh_beta} allows us to close the proof even for $k =1$.
More details can be found in the following proof.\\

\begin{proof}[Proof of Theorem~\ref{thm:kuznetsov_space_main_beta_dep}]
Below whenever the temporal argument is skipped, we assume that the given (in)equality holds for all $t \in [0, \finalthbeta]$. We proceed in two steps. \vspace*{2mm}
	
(a) Starting from the estimate in \eqref{ineq:error_test_pt2_errh}
and	using the uniform lower bound in \eqref{gamma_bound_beta}, we obtain
	\begin{equation}
		\begin{aligned}
			&	\pt \norm{(1+\kappa \pt \solh)^{1/2}\pt^2 \errh }^2_{\Ltwo}
			+ c^2  \pt  \norm{\nabla \pt \errh}^2_{\Ltwo}
			+ \beta    \norm{\nabla \pt^2 \errh}^2_{\Ltwo}
			\\
			&\lesssim \begin{multlined}[t]
			\|\pt^2 \solh \pt^2 \errh \|^2_{\Ltwo}
			+
			\ell |	\ip{  \nabla \pt \solh \cdot \nabla \pt \errh}{\pt^2 \errh}|
			+
			\ell | \ip{\nabla \solh \cdot \nabla \pt^2 \errh  }{\pt^2 \errh}|
			\\
 +
			\norm{\pt^2 \errh}^2_{\Ltwo}
			+    \norm{\nabla \pt \errh}^2_{\Ltwo}
			+
			\norm{\pt \delta_h }^2_{\Ltwo} \end{multlined}
		\end{aligned}
	\end{equation}
and have to treat the terms involving $\solh$ separately.
	Now exploiting $\beta>0$, we estimate
	\begin{align}
		\norm{\pt^2 \solh \pt^2 \errh }^2_{\Ltwo}
		&\lesssim
		\norm{\pt^2 \errh \pt^2 \errh }^2_{\Ltwo}
		+
		\norm{\pt^2 \projRitz \sol \, \pt^2 \errh }^2_{\Ltwo}
		\\
		&\lesssim
		\norm{ \pt^2 \errh }^2_{\Lthree}
		\norm{\pt^2 \errh }^2_{\Lsix}
		+
		\norm{\pt^2 \errh }^2_{\Ltwo}
		\\
		&\lesssim
		\bigl(\beta^{-1} h^{-d/3} \norm{ \pt^2 \errh }^2_{\Ltwo} \bigr)
		\, \beta \norm{\pt^2 \nabla \errh }^2_{\Ltwo}
		+
		\norm{\pt^2 \errh }^2_{\Ltwo}
		\\
		&\leq
		\frac{\beta}{4} \norm{\pt^2 \nabla \errh }^2_{\Ltwo}
		+
		C \norm{\pt^2 \errh }^2_{\Ltwo},
	\end{align}
	where we have used the $C_0$ bounds in \eqref{def:finaltimeh_beta} in the last step.
	Next, we estimate
	\begin{align}
		& | \ip{ \nabla \pt \solh \cdot \nabla \pt \errh}{ \pt^2 \errh } |
		\\
		%
		&\lesssim
		\norm{ \nabla \pt \errh}_{\Ltwo}^2
		\norm{\pt^2 \errh}_{\Linf} 
		+
		\norm{\nabla \pt \projRitz}_{\Linf}
		\norm{ \nabla \pt \errh}_{\Ltwo}
		\norm{\pt^2 \errh}_{\Ltwo}
		\\
		&\lesssim
		\beta^{-1/2}   \norm{ \nabla \pt \errh}_{\Ltwo}^2
		\beta^{1/2} h^{-d/6} \norm{\nabla \pt^2 \errh}_{\Ltwo} 
		+
		\norm{ \nabla \pt \errh}_{\Ltwo}
		\norm{\pt^2 \errh}_{\Ltwo}.
	\end{align}
	We further estimate the first term in the last line above by using the $C_0$ bounds in \eqref{def:finaltimeh_beta} for $h \leq h_0$:
	\begin{align}
		&\, \beta^{-1/2} h^{-d/6}  \norm{ \nabla \pt \errh}_{\Ltwo}^2
		\beta^{1/2} \norm{\nabla \pt^2 \errh}_{\Ltwo} 
		\\
		= &\,  \bigl(\beta^{-1/2} h^{-d/6}  \norm{\nabla \pt \errh}_{\Ltwo} \bigr)
		\beta^{1/2} \norm{\nabla \pt^2 \errh}_{\Ltwo} 
		\norm{ \nabla \pt \errh}_{\Ltwo}
		\\
		\leq
		&\,  \frac{\beta}{4} \norm{\nabla \pt^2 \errh}_{\Ltwo}^2
		+
		C 
		\norm{ \nabla \pt \errh}_{\Ltwo}^2 .
	\end{align}
	%
	%
	Next, proceeding as in
	\eqref{eq:nonlinear_term_v1} results in
	\begin{align}
		| \ip{\nabla \solh \cdot \nabla \pt^2 \errh  }{\pt^2 \errh}|
		&\lesssim
		\norm{\pt^2 \errh}_{\Ltwo}^2
		+
		\ip{\nabla \errh \cdot \nabla \pt^2 \errh   }{\pt^2 \errh} ,
	\end{align}
	and further with the discrete embedding \eqref{eq:discrete_Sobolev_embedding} we have
	\begin{align}
		|\ip{\nabla \errh \cdot \nabla \pt^2 \errh   }{\pt^2 \errh}|
		&\leq 
		\norm{\nabla \errh}_{\Linf}
		\norm{\nabla \pt^2 \errh}_{\Ltwo}
		\norm{\pt^2 \errh}_{\Ltwo}
		\\
		&
		\lesssim
		\norm{\nabla \errh}_{\Lsix} 
		\beta^{1/2}\norm{\nabla \pt^2 \errh}_{\Ltwo}
		\bigl(\beta^{-1/2} h^{-d/6} \norm{\pt^2 \errh}_{\Ltwo} \bigr)
		\\
&\leq
C C_0^2 h^{2 \varepsilon} \norm{\Deltah \errh}_{\Ltwo}^2 
+
\frac{\beta}{4} \norm{\nabla \pt^2 \errh}_{\Ltwo}^2,
	\end{align}
	%
	where we have used
	the $C_0$ bounds in
	 \eqref{def:finaltimeh_beta}.
	 For $h \leq h_0$,
	 we have thus again derived \eqref{est_eh_Step1} with constants independent of $h$ and $\beta$. \vspace*{2mm}

	(b) Testing the error equation \eqref{eq_errh} with $\phih = -  \Delta  \errh$ yields with Young's inequality
	\begin{align} 
		&c^2 \norm{ \Deltah \errh}_{\Ltwo}^2
		+
		\beta 
		\pt \norm{ \Deltah \errh}_{\Ltwo}^2\\
		%
		%
		&\leq
		\frac{c^2}{4}
		\norm{ \Deltah \errh}_{\Ltwo}^2
		\\
		&\quad +
		C\left(\norm{ \pt \errh }_{\Ltwo}^2
		+
		\norm{ \pt^2 \errh }_{\Ltwo}^2
		+
		\norm{ \delta_h}_{\Ltwo}^2
		+
		\ip{		\ell \nabla \solh \cdot \nabla \pt \errh }{\Deltah \errh}\right);
	\end{align}
cf.\ \eqref{ineq_Deltah_errh}. The last term is here estimated via 
	\begin{align}
		&|\ip{\ell \nabla \solh \cdot \nabla \pt \errh }{\Deltah \errh} |
		\\
		&\leq
		|\ip{\ell \nabla \errh \cdot \nabla \pt \errh }{\Deltah \errh} |
		+
		|\ip{\ell \nabla \projRitz \sol \cdot \nabla \pt \errh }{\Deltah \errh} |
		\\
		&\lesssim
		\bigl( h^{-d/6}
		\norm{ \Deltah \errh}_{\Ltwo} \bigr)  
		\norm{ \nabla \pt \errh }_{\Ltwo} 	\norm{ \Deltah \errh}_{\Ltwo}
		+
		\norm{ \nabla \pt \errh }_{\Ltwo}^2	 +	
		\frac{c^2}{4} \norm{ \Deltah \errh}_{\Ltwo}^2
		\\
		&\lesssim
		\norm{ \nabla \pt \errh }_{\Ltwo}^2	 +
				\frac{c^2}{2}	\norm{ \Deltah \errh}_{\Ltwo}^2,
	\end{align}
	where we have relied in the last step on the last $C_0$ bound in \eqref{def:finaltimeh_beta}.
	We
absorb the $\norm{ \Deltah \errh}_{\Ltwo}^2$ terms and 
	 conclude as before by Gr\"onwall's inequality that
	\begin{equation} 
\begin{aligned}
\begin{multlined}[t]	\norm{\pt^2 \errh(t) }^2_{\Ltwo}
		+  
		\norm{\nabla \pt \errh(t)}^2_{\Ltwo}  
		+
		\beta
		\norm{ \Deltah \errh(t)}_{\Ltwo}^2 + \beta  \int_0^t   \norm{\nabla \pt^2 \errh(s) }^2_{\Ltwo} \ds
\\
		+
		\int_0^t \norm{ \Deltah \errh(s)}_{\Ltwo}^2 \ds
		\leq C 
		h^{2k}  \end{multlined}
		\end{aligned}
	\end{equation}
on $[0, \finalthbeta]$. Thanks to this uniform bound, we can reason as in Section~\ref{sec:FE_analysis} to close again the arguments with \eqref{eq:beta_CFL_space} and obtain $\finalthbeta = T$. 
\end{proof}

\subsection{Non-robust estimates for a full discretization}

Our last main result for the full discretization is a variant of 
Theorem~\ref{thm:ErrorSemiImplicit} in the case of fixed $\beta>0$.
The strategy of the proof is similar to the one on Section~\ref{sec:Semi_implicit}.
In order to compensate the inverse powers of $\beta$, we have to assume the following coupling: 
\begin{equation} \label{eq:CFL_type_beta}
%
	\tau
	\leq C_1 \beta^{1/2} h^{d/6 + 2\varepsilon} ,
%
	\qquad
	h^{ k - d/6 - 2 \varepsilon} \leq C_1 \beta^{1/2} \, ,
\end{equation}
for constants $C_1$, $\varepsilon>0$ which are independent of $h$, $\tau$, and $\beta$.

\begin{mytheorem}[Non-robust fully discrete error bounds] \label{thm:ErrorSemiImplicit_beta}	
		Let the assumptions of Theorem~\ref{thm:ErrorSemiImplicit}
	hold, but replace the conditions on $k$ and $\beta$ with
	$k \geq 1$ and $0 < \beta \leq \bar{\beta}$.
	Under the coupling conditions
	\eqref{eq:CFL_type_beta},
	for $h \leq h_0$ and $\tau \leq \tau_0$,
	it holds
	\begin{equation} \label{eq:thm_ErrorSemiImplicit_estimate_nonrobust}
		\begin{aligned}
			\norm{\pt^2 \sol(\tn{n}) - \ptau^2 \solhn{n}}^2_{\Ltwo}
			&+  
			\norm{\nabla \pt \sol(\tn{n}) - \nabla \ptau \solhn{n} }^2_{\Ltwo}  
			\\
			&+
			\tau \sum_{j=1}^n
			\norm{\nabla \sol(\tn{n}) - \nabla \solhn{n} }^2_{L^6(\Omega)}  
			\leq C  \bigl( \tau + h^k \bigr)^2,
		\end{aligned}
	\end{equation}
	where the constant $C>0$ is independent of $h$, $\tau$, and $\beta$.
\end{mytheorem}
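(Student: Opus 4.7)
The plan is to mirror the induction framework of Section~\ref{sec:Semi_implicit} while exploiting the dissipative $\beta$-terms on the left-hand side, in the same spirit as the non-robust semi-discrete argument of Section~\ref{subsec:non_robust_semi}. The admissible conditions in \eqref{def:finaln} would be replaced by a $\beta$-dependent variant: require
\begin{equation}
\beta^{-1/2} h^{-d/6-\varepsilon} \bigl( \norm{\ptau^2 \errhn{j}}_{\Ltwo} + \norm{\nabla \ptau \errhn{j}}_{\Ltwo} \bigr) + h^{-d/6-\varepsilon} \norm{\Delta_h \errhn{j}}_{\Ltwo} \leq C_0
\end{equation}
for $1 \leq j \leq n$, and define $\finalnbeta$ as the largest such $n$ for which a unique $\solhn{j}$ of \eqref{eq:Euler} exists. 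The induction basis $n=2$ follows as in Lemma~\ref{lem:errhn2}: the projected Taylor initialisation yields $\norm{\ptau \errhn{1}}_{\Ltwo} \lesssim \tau(\tau + h^k)$, and the resolvent estimate \eqref{eq:Rn_inv_bound} applied at $n=1$ gives $\norm{\errhn{2}}_{\Ltwo} \lesssim \tau^2(\tau + h^k)$; combined with the inverse estimates \eqref{eq:inv_estimate} and the coupling \eqref{eq:CFL_type_beta}, this translates into the three $C_0$-conditions above.

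For the induction step, I would re-run the two testings of Propositions~\ref{prop:Deltah_errh_Euler} and~\ref{prop:pt2_errh_Euler}, but keep the $\beta$-weighted quantities
\begin{equation}
\beta \norm{\Deltah \errhn{n+1}}_{\Ltwo}^2 + \beta \tau \sum_{j=2}^{n} \norm{\nabla \ptau^2 \errhn{j+1}}_{\Ltwo}^2
\end{equation}
on the left-hand side rather than discarding them. The critical term $\ell \ip{\nabla \solhn{j} \cdot \nabla \ptau^2 \errhn{j+1}}{\ptau^2 \errhn{j+1}}$ would be expanded via the exact solution as in \eqref{eq:expansion_critical_term}, producing a residual involving $\nabla \errhn{j}$ paired with $\nabla \ptau^2 \errhn{j+1}$ and $\ptau^2 \errhn{j+1}$. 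Rather than absorbing it only by the $\Deltah \errhn{j+1}$-term (which forced $k\geq 2$ in the robust case), I would split off one $\nabla \ptau^2 \errhn{j+1}$ factor and absorb it via $\beta \norm{\nabla \ptau^2 \errhn{j+1}}_{\Ltwo}^2$ after pulling out a factor $\beta^{-1/2}$; the $C_0$-bound $h^{-d/6-\varepsilon} \norm{\Deltah \errhn{j}}_{\Ltwo} \leq C_0$ then yields a small factor $h^\varepsilon$ after the needed inverse estimate, exactly as in the proof of Theorem~\ref{thm:kuznetsov_space_main_beta_dep}. The analogous device, together with the discrete product rule \eqref{eq:product_rule_tau}, handles the time-differenced residuals such as $\kappa \ip{\ptau^2 \solhn{j} \ptau^2 \errhn{j}}{\ptau^2 \errhn{j+1}}$ and $\ell \ip{\nabla \ptau \solhn{j} \cdot \nabla \ptau \errhn{j}}{\ptau^2 \errhn{j+1}}$: $L^p$-norms arising from inverse estimates on $\ptau^k \errhn{j}$ are converted into $C_0$-controlled factors multiplying $\beta^{1/2} \norm{\nabla \ptau^2 \errhn{j+1}}_{\Ltwo}$.

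The main obstacle is the simultaneous propagation of existence, the $C_0$-bounds, and the error estimate. The resolvent injectivity in Lemma~\ref{lem:existence_Euler_approximation}(b) must be re-derived: in \eqref{eq:expansion_critical_term} the critical factor is $\tau h^{-1-d/6} \norm{\Deltah \errhn{n}}_{\Ltwo}$, which under the new admissibility bound is at most $C_0 \tau h^{-1+\varepsilon}$, and by the CFL-type condition $\tau \leq C_1 \beta^{1/2} h^{d/6 + 2\varepsilon}$ this is absorbable. After the Gr\"onwall argument over $j=2, \ldots, n$, the estimate
\begin{equation}
\norm{\ptau^2 \errhn{n+1}}_{\Ltwo}^2 + \norm{\nabla \ptau \errhn{n+1}}_{\Ltwo}^2 + \tau \sum_{j=1}^{n} \norm{\Deltah \errhn{j+1}}_{\Ltwo}^2 \lesssim (\tau + h^k)^2
\end{equation}
is obtained with a constant independent of $h$, $\tau$, $\beta$, and $n$; the second coupling $h^{k - d/6 - 2\varepsilon} \leq C_1 \beta^{1/2}$ is precisely what is needed to convert this bound back into the three admissibility conditions at step $n+1$, thereby closing the induction up to $n = N+1$. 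Applying finally the triangle inequality with \eqref{eq:projRitz_approx} and the discrete Sobolev embedding \eqref{eq:discrete_Sobolev_embedding} yields \eqref{eq:thm_ErrorSemiImplicit_estimate_nonrobust}.
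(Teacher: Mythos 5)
Your overall architecture coincides with the paper's: an induction over $n$ with $\beta$-weighted admissibility bounds, the same two testing steps (with $\ptau^2\errhn{j+1}$ for the differenced equation and $-\Deltah\errhn{j+1}$ for the undifferenced one), absorption of the critical nonlinearities into the $\beta$-dissipation, and closure via the couplings \eqref{eq:CFL_type_beta}. However, two of your quantitative absorptions do not close as stated. The main one concerns the critical term: after the inverse estimate, the residual from \eqref{eq:expansion_critical_term} reads $h^{-d/6}\norm{\Deltah\errhn{j}}_{\Ltwo}\norm{\nabla\ptau^2\errhn{j+1}}_{\Ltwo}\norm{\ptau^2\errhn{j+1}}_{\Ltwo}$, and splitting off $\beta^{1/2}\norm{\nabla\ptau^2\errhn{j+1}}_{\Ltwo}$ for absorption leaves the factor $\beta^{-1/2}h^{-d/6}\norm{\Deltah\errhn{j}}_{\Ltwo}$. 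Your admissibility set only contains the \emph{unweighted} pointwise bound $h^{-d/6-\varepsilon}\norm{\Deltah\errhn{j}}_{\Ltwo}\leq C_0$, which yields $C_0\beta^{-1/2}h^{\varepsilon}$; since the second coupling gives $\beta^{-1/2}\lesssim h^{d/6+2\varepsilon-k}$, this factor behaves like $h^{d/6+3\varepsilon-k}$ and blows up for $k=1$, $d\in\{2,3\}$ and small $\varepsilon$, so the resulting Gr\"onwall constant is not $\beta$-uniform. The paper avoids this by propagating the additional, $\beta^{-1/2}$-weighted \emph{time-summed} condition $\beta^{-1/2}h^{-d/6-\varepsilon}\bigl(\tau\sum_{j}\norm{\Deltah\errhn{j}}_{\Ltwo}^2\bigr)^{1/2}\leq C_0$ (which the error bound does deliver, unlike a weighted pointwise version), applying Cauchy--Schwarz in $j$, and absorbing $\max_j\norm{\ptau^2\errhn{j}}_{\Ltwo}^2$ with a small parameter $\alpha$ after taking the maximum over $n$.

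The second issue is in the existence/resolvent step: you apply one inverse estimate too many, reducing $\norm{\nabla\varphi_h}_{\Ltwo}$ to $h^{-1}\norm{\varphi_h}_{\Ltwo}$ and arriving at the factor $\tau h^{-1-d/6}\norm{\Deltah\errhn{n}}_{\Ltwo}\leq C_0\tau h^{-1+\varepsilon}$. Under the non-robust coupling $\tau\leq C_1\beta^{1/2}h^{d/6+2\varepsilon}$ this is of order $h^{d/6+3\varepsilon-1}$, which is \emph{not} small for $d\leq 3$, so the claimed absorption fails. The correct device, as in \eqref{eq:Rn_inv_bound}, is to keep the mixed product $\tau\norm{\nabla\varphi_h}_{\Ltwo}\norm{\varphi_h}_{\Ltwo}$, bound $h^{-d/6}\norm{\Deltah\errhn{n}}_{\Ltwo}\leq C_0h^{\varepsilon}$, and absorb by Young's inequality into the coercive term $\tau^2c^2\norm{\nabla\varphi_h}_{\Ltwo}^2$ retained on the left. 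Both defects are repairable within your framework (the repairs are exactly what the paper does), but as written the induction does not close.
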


In order to prove the result, we set up an induction argument as before, and show that for $n = 2, \ldots, N+1$ the solution $\solhn{n}$ exists, and similarly to
\eqref{eq:bounds_intermediate}, it holds
	\begin{equation}  \label{eq:bounds_intermediate_beta}
	\begin{aligned}
	\begin{multlined}[t]	\norm{ \ptau^2 \errhn{n}}^2_{\Ltwo}
		+  
		\norm{\nabla   \ptau  \errhn{n} }^2_{\Ltwo}  
		+
		\beta \norm{\Delta_h \errhn{n+1} }^2_{\Ltwo} 
		+
		\tau \sum_{j=1}^{n}
		\norm{ \Delta_h  \errhn{j} }^2_{L^6(\Omega)}  
		\leq C  \bigl( \tau + h^k \bigr)^2, \end{multlined}
	\end{aligned}
\end{equation}
as well as, analogously to \eqref{def:finaln},
\begin{equation}  \label{def:finaln_beta}
	\begin{aligned}
		&\, \beta^{-1/2} h^{-d/6 - \varepsilon}
		 \norm{\ptau^2 \errhn{n}}_{\Ltwo} \leq C_0,  
\\[1mm]
&\,\beta^{-1/2} h^{-d/6 - \varepsilon} \norm{\nabla \ptau \errhn{n}}_{\Ltwo}  
\leq C_0, 
\\[1mm]
&\, h^{-d/6 - \varepsilon} \norm{\Delta_h \errhn{n}}_{\Ltwo} 
\leq C_0, 
\\[1mm]
&\, \beta^{-1/2} h^{-d/6 - \varepsilon}\bigl( \tau \sum_{j=1}^n \norm{ \Delta_h \errhn{j}}_{\Ltwo}^2 \bigr)^{1/2} 
\leq C_0  ,
\end{aligned}
\end{equation}
with some constants $C$, $C_0 >0$ that are independent of $h$, $\tau$, $n$, and $\beta$ and $\varepsilon$ chosen as in \eqref{eq:CFL_type_beta}.

\begin{lemma}
	Under the assumptions of Theorem~\ref{thm:ErrorSemiImplicit_beta},
	the assertions of
	Lemma~\ref{lem:errhn1},
	Lemma~\ref{lem:existence_Euler_approximation},
	Lemma~\ref{Lemma:Bound_defect_Euler},
	and
	Lemma~\ref{lem:errhn2} hold true, and in particular 
	\eqref{eq:bounds_intermediate_beta} and
	\eqref{def:finaln_beta}
	hold for $n=2$.
\end{lemma}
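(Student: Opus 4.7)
The plan is to revisit each of the four lemmas listed and verify that their proofs transfer from Section~\ref{subsec:induction_base} with adaptations driven by the new $C_0$ bounds in \eqref{def:finaln_beta} and the coupling \eqref{eq:CFL_type_beta}. The main obstacle lies in the invertibility estimate of Lemma~\ref{lem:existence_Euler_approximation}: for $k=1$ the critical quadratic term in the coercivity computation \eqref{eq:expansion_critical_term} can no longer be absorbed purely by the $\gamma\norm{\varphi_h}_{\Ltwo}^2$ contribution, and one must instead exploit the $\tau\beta\norm{\nabla\varphi_h}_{\Ltwo}^2$ stiffness piece on the left-hand side of \eqref{eq:Rn_inv_bound}, which was dormant in the robust regime.

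First, Lemma~\ref{lem:errhn1} depends only on a Taylor expansion of $\sol$ at $t=0$, the defining identity of $w_0$ in \eqref{eq:def_Euler_inits}, and the $h^k$ approximation property \eqref{eq:projRitz_approx} of $\projRitz$; all three ingredients are available for $k=1$, so the bound transfers verbatim. Similarly, Lemma~\ref{Lemma:Bound_defect_Euler} uses only the temporal Taylor errors controlled by Lemma~\ref{lem:diff_ptau_pt}, the approximation properties of $\projRitz$, and the hypothesis \eqref{assumption_accuracy_fh_full} on $f_h$, all of which are stated for general $k\geq 1$; so the two defect bounds carry over without change.

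For Lemma~\ref{lem:existence_Euler_approximation} I will repeat the coercivity argument up to the expansion \eqref{eq:expansion_critical_term}. The lower-order remainder $C\tau(1+h^{k-1})\norm{\varphi_h}_{\Ltwo}^2$ remains absorbable by $\gamma\norm{\varphi_h}_{\Ltwo}^2$ for $\tau$ small even when $k=1$. For the delicate term I plan to apply Young's inequality in the form
\begin{equation*}
\tau \bigl| \ip{\nabla \errhn{n}\cdot \nabla \varphi_h}{\varphi_h} \bigr|
\leq
\tfrac{\tau \beta}{2} \norm{\nabla \varphi_h}_{\Ltwo}^2
+
\tfrac{C\tau}{\beta} \norm{\nabla\errhn{n}}_{\Linf}^2 \norm{\varphi_h}_{\Ltwo}^2,
\end{equation*}
and control $\norm{\nabla \errhn{n}}_{\Linf}\lesssim h^{-d/6}\norm{\Delta_h \errhn{n}}_{\Ltwo}$ via an inverse estimate and the discrete Sobolev embedding \eqref{eq:discrete_Sobolev_embedding}. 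Substituting the pointwise $C_0$ bound $\norm{\Delta_h \errhn{n}}_{\Ltwo}\leq C_0 h^{d/6+\varepsilon}$ from \eqref{def:finaln_beta} together with the compound estimate $\tau/\beta\leq C h^{-k+d/3+4\varepsilon}$ implied by combining the two parts of \eqref{eq:CFL_type_beta} reduces the coefficient of $\norm{\varphi_h}_{\Ltwo}^2$ to $C h^{-k+d/3+6\varepsilon}$, a positive power of $h$ within the admissible range $\varepsilon \in (0, \tfrac{1}{2}-\tfrac{d}{12})$ for the relevant dimensions and $k=1$. The first piece is absorbed by $\tau\beta \norm{\nabla\varphi_h}_{\Ltwo}^2$ on the left-hand side of \eqref{eq:Rn_inv_bound}, while the second is absorbed into $\gamma\norm{\varphi_h}_{\Ltwo}^2$; this is where the coupling conditions in \eqref{eq:CFL_type_beta} are tuned precisely to make the exponents balance. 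With this new resolvent bound in hand, Lemma~\ref{lem:errhn2} follows by inserting it into the recursion \eqref{eq:recursion_for_erhn} at $n=1$ together with the bounds from Lemma~\ref{lem:errhn1} and Lemma~\ref{lem:diff_ptau_pt}, exactly as in the robust setting.

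Once the four lemmas have been established, the base case assertions \eqref{eq:bounds_intermediate_beta} and \eqref{def:finaln_beta} at $n=2$ follow immediately by substituting the bound of Lemma~\ref{lem:errhn2} into each inequality and invoking \eqref{eq:CFL_type_beta} to verify each of the scaled $C_0$ conditions, in complete analogy with the induction basis in Section~\ref{subsec:induction_base}.
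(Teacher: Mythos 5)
Your proposal is essentially correct and follows the same overall architecture as the paper (re-verify each of the four lemmas, with the only genuinely delicate point being the coercivity/injectivity estimate for $\resolvent{n}$ in Lemma~\ref{lem:existence_Euler_approximation}); the treatment of Lemmas~\ref{lem:errhn1}, \ref{Lemma:Bound_defect_Euler}, and \ref{lem:errhn2} is as terse in the paper as in your sketch. Where you diverge is in how the critical term from \eqref{eq:expansion_critical_term} is absorbed. The paper does \emph{not} touch the $\tau\beta\norm{\nabla\varphi_h}_{\Ltwo}^2$ contribution at all: it keeps the bound
$\tau h^{-d/6}\norm{\Delta_h\errhn{n}}_{\Ltwo}\norm{\nabla\varphi_h}_{\Ltwo}\norm{\varphi_h}_{\Ltwo}\leq C_0 h^{\varepsilon}\,\tau\norm{\nabla\varphi_h}_{\Ltwo}\norm{\varphi_h}_{\Ltwo}$, using only the third $C_0$ bound in \eqref{def:finaln_beta}, and then splits by Young's inequality into $\alpha\bigl(\norm{\varphi_h}_{\Ltwo}^2+\tau^2\norm{\nabla\varphi_h}_{\Ltwo}^2\bigr)$, absorbing the gradient piece into the $\tau^2 c^2$ stiffness term that is present independently of $\beta$; no inverse powers of $\beta$ and no use of the coupling \eqref{eq:CFL_type_beta} are needed at this step. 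Your route --- Young's against $\tfrac{\tau\beta}{2}\norm{\nabla\varphi_h}_{\Ltwo}^2$ at the price of $\tfrac{C\tau}{\beta}\norm{\nabla\errhn{n}}_{\Linf}^2$, then the coupling to get the coefficient $Ch^{-k+d/3+6\varepsilon}$ --- is also workable, but it is strictly more expensive: the exponent $-k+d/3+6\varepsilon$ is nonnegative for $k=1$, $d=3$ and any $\varepsilon>0$, whereas for $d\leq 2$ you need the additional restriction $\varepsilon>\tfrac{k}{6}-\tfrac{d}{18}$ (e.g.\ $\varepsilon>1/18$ for $d=2$), which the paper's argument avoids entirely. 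So your proof is valid (with that caveat on $\varepsilon$ in low dimensions made explicit), but the paper's absorption via the $\tau^2c^2$ term is the cleaner mechanism and is the one you should prefer.
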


\begin{proof}
	The bounds in Lemmas~\ref{lem:errhn1} and \ref{lem:errhn2} 
	directly follow from the conditions in \eqref{eq:CFL_type_beta}.
	For the existence statement in Lemma~\ref{lem:existence_Euler_approximation},
	we estimate the term in \eqref{eq:expansion_critical_term} now via
	\begin{equation} 
		\begin{aligned} 
			 \tau | \ip{ \nabla \solhn{n} \cdot \nabla \varphi_h }{ \varphi_h }  |
			\lesssim 
			&\,	\tau h^{-d/6} \norm{ \Delta_h \errhn{n} }_{\Ltwo}
			\norm{ \nabla \varphi_h }_{\Ltwo}
			\norm{ \varphi_h }_{\Ltwo}
			+
			\tau \norm{ \varphi_h }_{\Ltwo}^2 
			\\
			\leq
			&\,\alpha \bigl(
			\norm{ \varphi_h }_{\Ltwo}^2 + \tau^2   \norm{ \nabla \varphi_h }_{\Ltwo}^2
			\bigr)
		\end{aligned}
	\end{equation}
	for any $\alpha >0$, where we have used \eqref{def:finaln_beta} in the last step.
\end{proof}

Further, we have the crucial result in the leading nonlinear term
which prevents degeneracy of the problem also for $k=1$.

\begin{lemma} \label{lem:a-priori_bounds_on_solhn_beta} 
	Let the assumptions of Theorem~\ref{thm:kuznetsov_space_main_beta_dep}  hold.
	If the estimates 
	\eqref{eq:bounds_intermediate_beta} 
	and
	\eqref{def:finaln_beta}
	hold up to $n\geq 2$, then
	 we have 
	\begin{equation} \label{gamma_bound_Euler_beta}
		1 + \kappa \ptau \solhn{j} \geq 
		\quasilowerbound > 0,  \quad j = 2,\ldots,n,
	\end{equation}
	where  $\gamma$ does not depend on $h$, $\tau$ $\beta$, or $n$.
\end{lemma}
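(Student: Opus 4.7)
The plan is to mirror the argument used in Lemma~\ref{lem:a-priori_bounds_on_solh_beta} for the semi-discrete setting, now adapted to the fully discrete case by working with the discrete derivative $\ptau$ instead of $\pt$. The central idea is to split
\[
	\ptau \solhn{j} = \projRitz \ptau \soln{j} - \ptau \errhn{j},
\]
so that we can invoke the non-degeneracy assumption $1 + \kappa \pt \sol \geq \tilde{\gamma} > 0$ from \eqref{assumptions_exact_sol} on the continuous side, and reduce the claim to showing that $\|\ptau \solhn{j} - \pt \sol(\tn{j})\|_{\Linf}$ can be made uniformly small.

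First, I would control the error contribution $\ptau \errhn{j}$ in $\Linf$. Since $\ptau \errhn{j} \in \Vh \cap H_0^1(\Omega)$, combining the inverse estimate \eqref{eq:inv_estimate} with $p=6$ and the Sobolev embedding $H_0^1(\Omega) \hookrightarrow L^6(\Omega)$ yields
\[
	\|\ptau \errhn{j}\|_{\Linf} \;\lesssim\; h^{-d/6}\, \|\nabla \ptau \errhn{j}\|_{\Ltwo}.
\]
Using the second bound in \eqref{def:finaln_beta} now gives $\|\ptau \errhn{j}\|_{\Linf} \lesssim C_0 \beta^{1/2} h^{\varepsilon}$, which is $o(1)$ as $h \to 0$ uniformly in $\beta \leq \bar\beta$.

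Next, I would estimate $\|\projRitz \ptau \soln{j} - \pt \sol(\tn{j})\|_{\Linf}$. Splitting into
\[
	\|\projRitz \ptau \soln{j} - \pt \sol(\tn{j})\|_{\Linf} \leq \|(\projRitz - \mathrm{Id}) \ptau \soln{j}\|_{\Linf} + \|\ptau \soln{j} - \pt \sol(\tn{j})\|_{\Linf},
\]
the first piece is bounded by $C h \, \|\pt \sol\|_{L^\infty(W^{1,\infty}(\Omega))}$ via \eqref{eq:projRitz_approx} with $\ell=0$, $p=\infty$, while the second is $\mathcal{O}(\tau)\, \|\pt^2 \sol\|_{L^\infty(\Linf)}$ by a standard Taylor argument (consistent with Lemma~\ref{lem:diff_ptau_pt}). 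Both quantities are controlled since $\sol \in \calU$.

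Combining the three bounds, one gets
\[
	\|\ptau \solhn{j} - \pt \sol(\tn{j})\|_{\Linf} \lesssim \tau + h + C_0 \beta^{1/2} h^{\varepsilon},
\]
so that
\[
	1 + \kappa \ptau \solhn{j} \geq \tilde\gamma - |\kappa|\,C\,(\tau + h + \beta^{1/2} h^{\varepsilon}).
\]
Choosing $h \leq h_0$ and $\tau \leq \tau_0$ sufficiently small (independently of $\beta$), the right-hand side is bounded below by $\gamma := \tilde\gamma/2 > 0$, yielding \eqref{gamma_bound_Euler_beta}. The main (mild) subtlety is to verify that the constants absorbed along the way really do not depend on $n$ or $\beta$; this follows because every one of the ingredients, namely the Ritz and inverse estimates, the $C_0$ bounds in \eqref{def:finaln_beta}, and the continuous regularity \eqref{assumptions_exact_sol}, is uniform in these parameters.
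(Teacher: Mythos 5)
Your proposal is correct and follows essentially the same route as the paper: split $\ptau\solhn{j}$ via the Ritz projection of the exact solution, control the error part in $\Linf$ through the inverse estimate $h^{-d/6}$ combined with the $C_0$ bound $\beta^{-1/2}h^{-d/6-\varepsilon}\norm{\nabla\ptau\errhn{j}}_{\Ltwo}\leq C_0$ from \eqref{def:finaln_beta}, and absorb the remaining $\mathcal{O}(h)+\mathcal{O}(\tau)$ consistency terms using \eqref{eq:projRitz_approx} and Lemma~\ref{lem:diff_ptau_pt} before invoking the non-degeneracy \eqref{assumptions_exact_sol} of the exact solution. Your version is in fact slightly more explicit than the paper's (which only records the $\Linf$ bound on $\ptau\solhn{j}$ and states that the lower bound follows), but the mechanism and all ingredients coincide.
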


\begin{proof}
Along the lines of Lemma~\ref{lem:a-priori_bounds_on_solh_beta}, we have 
	\begin{align}
		\norm{\ptau \solhn{j}}_{\Linf} 
		&\lesssim
		\norm{\ptau \soln{j}}_{\Linf} 
		+
		\norm{(\Id -\projRitz) \ptau \soln{j}}_{\Linf}
		+ h^{-d/6} \norm{\pt \errh}_{\Lsix} 
		\\
		&\leq 
		\norm{\pt \sol}_{L^{\infty}(\Linf)} + C h^k +  C \beta^{1/2} h^{\varepsilon} 
	\end{align}
	and hence the lower bound in \eqref{gamma_bound_Euler_beta} follows as well as boundedness of $\norm{\ptau \solh}_{\Linf} $.	
\end{proof}

With this result, we can prove the principal result on a non-robust fully discrete bound.
As already explained in Section~\ref{subsec:non_robust_semi},
the appearance of the inverse powers of $\beta$ comes in by exploiting the following bounds from
Proposition~\ref{prop:Deltah_errh_Euler}
and \ref{prop:pt2_errh_Euler}:
\begin{equation}
	\beta \norm{\Delta_h \errhn{n+1} }^2_{\Ltwo} 
	+
	\tau \beta \sum\limits_{j=2}^n
	 \norm{\nabla \ptau^2 \errhn{j+1} }^2_{\Ltwo}
	\leq C \bigl( \tau +h^k \bigr)^2 ,
\end{equation}
and applying the relations in \eqref{eq:CFL_type_beta}.

\begin{proof} [Proof of Theorem~\ref{thm:ErrorSemiImplicit_beta}] 
	We conduct the proof in two testing steps. \midskip
	(a) 
	We proceed as in Proposition~\ref{prop:pt2_errh_Euler} 
	and
	test the differentiated error equation \eqref{eq:error_Euler_ptau} with $\varphi_h = \ptau^2 \errhn{j+1}$ to obtain
	\begin{align} 
		& \ip{(1+\kappa \ptau \solhn{j}) \ptau^3 \errhn{j+1}  }{\ptau^2 \errhn{j+1}}
		+
		c^2 \ip{\nabla \ptau \errhn{j+1} }{\nabla \ptau^2 \errhn{j+1} }    
		+	
		\beta \norm{\nabla \ptau^2 \errhn{j+1} }_{\Ltwo}^2
		\\
		\leq
		\kappa 
		& | \ip{ \ptau^2 \solhn{j}  \ptau^2 \errhn{j}}{\ptau^2 \errhn{j+1}}  |
		+ \ell 
		| \ip{\nabla \solhn{j} \cdot \nabla \ptau^2\errhn{j+1}}{\ptau^2 \errhn{j+1}} |
		\\
		&\qquad +\ell | \ip{   \nabla \ptau \solhn{j} \cdot \nabla \ptau \errhn{j}}{\ptau^2 \errhn{j+1}} |
		+ \norm{ \ptau^2 \errhn{j+1}}_{\Ltwo}^2
		+
		\norm{ \ptau\delta_h^{j+1} }_{\Ltwo}^2 ,
	\end{align}		
	and estimate the three terms separately.
	We use the $C_0$ bounds in
	\eqref{def:finaln_beta} to conclude
	\begin{align}
		&\, 		\kappa \ip{ \ptau^2 \solhn{j}  \ptau^2 \errhn{j}}{\ptau^2 \errhn{j+1}} 
		\\
		\lesssim
		&\, \bigl( \beta^{-1/2}  h^{-d/6}
		\norm{ \ptau^2 \errhn{j}}_{\Ltwo} \bigr)
		\norm{ \ptau^2 \errhn{j}}_{\Ltwo}
		\beta^{1/2} \norm{ \nabla \ptau^2 \errhn{j+1}}_{\Ltwo}
		+
		\norm{ \ptau^2 \errhn{j}}^2_{\Ltwo}
		+
		\norm{ \ptau^2 \errhn{j+1}}^2_{\Ltwo}
		\\
		\leq 
		&\, \frac{\beta}{4}\norm{ \nabla \ptau^2 \errhn{j+1}}^2_{\Ltwo}
		+
		C \norm{ \ptau^2 \errhn{j}}^2_{\Ltwo}
		+
		C \norm{ \ptau^2 \errhn{j+1}}^2_{\Ltwo} ,
	\end{align}
	and absorb the $\beta$ term by the left-hand side $\beta$ term.
	We sum from $j=2,\ldots,n$ and obtain
	by the expansion in  \eqref{eq:expansion_critical_term}
	%
	\begin{align}
		&\, \tau \sum\limits_{j=2}^n| \ell 
		\ip{\nabla \solhn{j} \cdot \nabla \ptau^2\errhn{j+1}}{\ptau^2 \errhn{j+1}}  |
		\\
		\lesssim
		&\, \begin{multlined}[t] h^{-d/6} \beta^{-1/2 }
		\bigl(  \tau  \sum\limits_{j=2}^n  \norm{\Delta_h \errhn{j}}_{\Ltwo}^2
		\bigr)^{1/2}
		\bigl( 
		\tau \beta  \sum\limits_{j=2}^n 
		\norm{\nabla \ptau^2 \errhn{j+1}}_{\Ltwo}^2
		\bigr)^{1/2}
		\normlinfLtwo{\ptau^2 \errhn{j}}{2}{n+1}
		\\
		+
		\tau  \sum\limits_{j=2}^n 
		\norm{ \ptau^2 \errhn{j}}_{\Ltwo}^2 \end{multlined}
		\\
		\leq
		&\,
		\alpha_1
		\normlinfLtwo{\ptau^2 \errhn{j}}{2}{n+1}
		+
		 \frac{\beta}{4}
		\tau  \sum\limits_{j=2}^n 
		\norm{ \nabla \ptau^2 \errhn{j+1}}_{\Ltwo}^2
		+
		C \tau  \sum\limits_{j=2}^n \norm{\ptau^2 \errhn{j+1}}_{\Ltwo}^2
	\end{align}
	for any $\alpha_1>0$ by the bounds in \eqref{def:finaln_beta}
	for $h\leq h_0$ and $\tau \leq \tau_0$.
	Finally, we estimate
	\begin{align}
		| \ell \ip{   \nabla \ptau \solhn{j} \cdot \nabla \ptau \errhn{j}}{\ptau^2 \errhn{j+1}} |
		&\leq
		| \ell \ip{   \nabla \ptau \errhn{j} \cdot \nabla \ptau \errhn{j}}{\ptau^2 \errhn{j+1}} |
		\\
		&\quad +
		\norm{\nabla \ptau \errhn{j+1}}_{\Ltwo}^2
		+
		\norm{\ptau^2 \errhn{j+1}}_{\Ltwo}^2
	\end{align}
	and with this, by the $C_0$ bounds in \eqref{def:finaln_beta},
	\begin{align}
		| \ell \ip{   \nabla \ptau \errhn{j} \cdot \nabla \ptau \errhn{j}}{\ptau^2 \errhn{j+1}} |
		&\lesssim
		\norm{\nabla \ptau \errhn{j}}_{\Ltwo}
		\norm{\nabla \ptau \errhn{j}}_{\Ltwo}
		h^{-d/6} \norm{\nabla \ptau^2 \errhn{j+1}}_{\Ltwo}
		\\
		&\leq  
		\frac{\beta}{4}  \norm{\nabla \ptau^2 \errhn{j+1}}_{\Ltwo}^2
		+
		C \norm{\nabla \ptau \errhn{j}}_{\Ltwo}^2 .
	\end{align}
	Lemma~\ref{lem:ftc_Euler} then yields \eqref{eq:est_pt2_errh_Euler}. \\
	
 	(b)	We then test
	\eqref{eq:error_Euler} with $\phih = - \Deltah \errhn{j+1}$ to obtain
	\begin{align}
		&\, c^2 \norm{\Delta_h \errhn{j+1} }^2_{\Ltwo}
		+
		\beta \ip{\Delta_h  \ptau \errhn{j+1} }{\Delta_h \errhn{j+1} }
		\\
		&=
		\ip{ (1+\kappa \ptau \solhn{j}) \ptau^2 \errhn{j+1}  
		}{\Delta_h \errhn{j+1} }
		+ \ip{  \ell \nabla \solhn{j} \cdot \nabla \ptau\errhn{j+1} }
		{\Delta_h \errhn{j+1} }
		\\
		&\quad + \ip{\delta_h^{j+1}}{\Delta_h \errhn{j+1} }
		\\
		&\leq \begin{multlined}[t]
		\frac{c^2}{4} 
			\norm{\Delta_h \errhn{j+1} }^2_{\Ltwo}
		+
	C \bigl(	\norm{\nabla \ptau \errhn{j+1} }^2_{\Ltwo}
		+
		\norm{ \ptau^2 \errhn{j+1} }^2_{\Ltwo}
		+
		\norm{\delta_h^{j+1} }^2_{\Ltwo}
	  \\
	  +
		| \ip{  \ell \nabla \errhn{j} \cdot \nabla \ptau\errhn{j+1} }{\Delta_h \errhn{j+1} } | \bigr). \end{multlined}
	\end{align}
	For the last term, we use the $C_0$ bound in \eqref{def:finaln_beta} on $\norm{\Delta_h \errhn{j} }_{\Ltwo}$ to conclude that
	\begin{align}
		| \ip{  \ell \nabla \errhn{j} \cdot \nabla \ptau\errhn{j+1} }{\Delta_h \errhn{j+1} } |
		&\lesssim
		h^{-d/6}	\norm{\Delta_h \errhn{j} }_{\Ltwo}
		\norm{\nabla \ptau \errhn{j} }_{\Ltwo}
		\norm{\Delta_h \errhn{j+1} }_{\Ltwo}
		\\
		&\leq 
		C \norm{\nabla \ptau \errhn{j} }^2_{\Ltwo}
		+
		\frac{c^2}{4} 
		\norm{\Delta_h \errhn{j+1} }^2_{\Ltwo} .
	\end{align}
	We can absorb the $\norm{ \Deltah  \errhn{j+1}}_{\Ltwo}^2$ terms by the left-hand side
	and reason as in the proof of Proposition~\ref{prop:intermediate_step}
	to arrive at \eqref{eq:bounds_intermediate_beta}.
The coupling in \eqref{eq:CFL_type_beta} then implies
\eqref{def:finaln_beta}, and the claim follows by induction.
\end{proof}

\begin{appendices}
%
%

\section{Estimates for discrete derivatives}
\label{sec:appendix}

In order to keep the presentation self-contained, we include the proof of Lemma~\ref{lem:diff_ptau_pt} here in the appendix. \\

\begin{proof}[Proof of Lemma~\ref{lem:diff_ptau_pt}]
	For the sake of readability, we just consider a generic norm a
	and assume without loss of generality 
	$\ell_1 \leq \ell_2$ to obtain
	\begin{align}
		\norm{\ptau^{m - \ell_1} \pt^{\ell_1} \soln{n} - \ptau^{m - \ell_2} \pt^{\ell_2} \soln{n}}
		=
		\norm{
			\ptau^{m - \ell_2} \bigl(  \ptau^{\ell_2 - \ell_1} \pt^{\ell_1}\soln{n} -  \pt^{\ell_2} \soln{n} \bigr)}.
	\end{align}
	Applying the fundamental theorem of calculus $\ell$-times gives
	\begin{equation}
		\ptau^\ell \soln{n} = \frac{1}{\tau^\ell} \int_0^\tau \ldots \int_0^\tau 
		\pt^{\ell} \sol(\tn{n-\ell} + \sigma_1 + \ldots + \sigma_\ell ) \dint{\sigma_1} \ldots  \dint{\sigma_\ell} \,,
	\end{equation}
	and similarly
	\begin{align}
		\ptau \soln{k} - \pt \soln{k} &= 
		- \tau 
		\int_0^{1} \int_{s}^1
		\pt^2 \sol(\tn{k-1}+ \tau \eta )
		\dint{\eta} \ds.
	\end{align}
	With the estimate
	\begin{align}
		\bigl| \int_0^1 \ldots \int_0^1 
		\sol(\sigma_1 + \ldots + \sigma_\ell )
		\dint{\sigma_{1}} \ldots 	\dint{\sigma_{\ell}}
		\bigr|
		&\leq
		\int_{0}^{\ell}  | \sol(\sigma_1   ) | \dint{\sigma_1},
	\end{align}
	we may write
	\begin{align}
		&\, \ptau^{m-\ell_2}\bigl(	\ptau^{\ell_2 - \ell_1} \pt^{\ell_1} \soln{n} -  \pt^{\ell_2} \soln{n}
		\bigr)
		=
		\sum_{j=\ell_1}^{\ell_2 - 1 }  (\ptau - \pt) 
		\bigl( \ptau^{m - 1 - j} \pt^{j}  \soln{n} \bigr)
		\\
		&= 
		- \tau \sum_{j=\ell_1}^{\ell_2 - 1 } 
		\int_0^1 \ldots \int_0^1 
		\int_0^{1} \int_s^{1}
		\pt^{m+1} \sol(\tn{n-(m-j)} 
		+
		\tau ( \eta
		+ \sigma_1 + \ldots + \sigma_{m-1-j} ) )   
		\dint{\eta} \ds
		\dint{\sigma_1} \ldots  \dint{\sigma_{m-1-j}}
	\end{align}
	and hence
	\begin{align}
		&\, \norm{ \ptau^{m-\ell_2}\bigl(	\ptau^{\ell_2 - \ell_1} \pt^{\ell_1} \soln{n} -  \pt^{\ell_2} \soln{n}
			\bigr) }
		\\
		&\leq
		\tau \sum_{j=\ell_1}^{\ell_2 - 1 } 
		\int_0^1 \ldots \int_0^1 
		\norm{
			\pt^{m+1} \sol(\tn{n-(m-j)} 
			+
			\tau ( \eta
			+ \sigma_1 + \ldots + \sigma_{m-1-j} ) )   
		}
		\dint{\eta} 
		\dint{\sigma_1} \ldots  \dint{\sigma_{m-1-j}}
		\\
		&\leq 
		\tau \sum_{j=\ell_1}^{\ell_2 - 1 } 
		\int_0^{m-j}
		\norm{
			\pt^{m+1} \sol(\tn{n-(m-j)} 	+	\tau  \eta	)   
		}
		\dint{\eta} 
		\\
		&\leq 
		\tau m
		\int_0^{m}
		\norm{
			\pt^{m+1} \sol(\tn{n-m} +	\tau  \eta	)   
		}
		\dint{\eta} 
	\end{align}
	and with this
	\begin{align}
		&\, \norm{ \ptau^{m-\ell_2}\bigl(	\ptau^{\ell_2 - \ell_1} \pt^{\ell_1} \soln{n} -  \pt^{\ell_2} \soln{n}
			\bigr) }^2
		\leq 
		\tau m^3
		\int_{\tn{n-m}}^{\tn{n}}
		\norm{
			\pt^{m+1} \sol( s	)   
		}^2
		\ds,
	\end{align}
	which concludes the proof.
\end{proof}

\end{appendices}

\bmhead{Acknowledgements}

The first author is funded by the Deutsche Forschungsgemeinschaft (DFG, German Research Foundation) -- Project-ID 258734477 -- SFB 1173.



\bibliography{references}


\end{document}